\newtheorem{theorem}{Theorem}[section]
\newtheorem{lemma}[theorem]{Lemma}
\newtheorem{proposition}[theorem]{Proposition}
\newtheorem{corollary}[theorem]{Corollary}
\theoremstyle{definition}
\newtheorem{definition-proposition}[theorem]{Definition-Proposition}
\newtheorem{definition}[theorem]{Definition}
\newtheorem{remark}[theorem]{Remark}
\newcommand{\RV}{\mf R_G}
\newcommand{\oa}{\mathfrak{F}_\mathcal{A}}	
\newcommand{\sslash}{\mathbin{/\mkern-6mu/}}
\newcommand{\LFP}{\operatorname{\mathbf{Pr}}}
\newcommand{\ind}{\operatorname{ind}}
\newcommand{\comp}{\operatorname{comp}}
\newcommand{\mf}{\mathfrak}
\newcommand{\rightloop}{%
      \mathrel{\raisebox{.1em}{%
      \reflectbox{\rotatebox[origin=c]{-90}{$\circlearrowright$}}}}}
\newcommand{\Vect}{\operatorname{Vect}}
\newcommand{\End}{\operatorname{End}}
\newcommand{\Hom}{\operatorname{Hom}}
\newcommand{\Disk}{\operatorname{Disk}}
\newcommand{\Mfld}{\operatorname{Mfld}}
\newcommand{\Rep}{\operatorname{Rep\,}}
\newcommand{\Repq}{\operatorname{Rep_q}}
\newcommand{\colim}[1]{\underset{#1}{\operatorname{colim}}}
\newcommand{\tr}{\text{tr}\,}
\newcommand{\K}{\mathbf{k}}
\newcommand{\cA}{\mathcal{A}}
\newcommand{\cB}{\mathcal{B}}
\newcommand{\cC}{\mathcal{C}}
\newcommand{\cD}{\mathcal{D}}
\newcommand{\cO}{\mathcal{O}}
\newcommand{\cZ}{\mathcal{Z}}
\newcommand{\cM}{\mathcal{M}}
\newcommand{\cN}{\mathcal{N}}
\newcommand{\QCoh}{\operatorname{QCoh}}
\newcommand{\Rex}{\operatorname{\mathbf{Rex}}}
\newcommand{\cE}{\mathcal{E}}
\newcommand{\dS}{\Big/\hspace{-5pt}\Big/}
\newcommand{\ot}{\otimes}
\newcommand{\bt}{\boxtimes}
\newcommand{\id}{\operatorname{id}}
\newcommand{\RR}{\mathbb{R}}
\newcommand{\CC}{\mathbb{C}}
\newcommand{\ZZ}{\mathbb{Z}}
\newcommand{\ev}{\operatorname{ev}}
\newcommand{\Mat}{\operatorname{Mat}}
\newcommand{\ltr}{\triangleleft}
\newcommand{\rtr}{\triangleright}
\newcommand{\act}{\operatorname{act}}
\newcommand{\uch}{\underline{\operatorname{Ch}}}
\newcommand{\modu}{\operatorname{-mod}}
\newcommand{\bimodu}{\operatorname{-mod-}}
\newcommand{\modul}{\operatorname{mod}}
\newcommand{\un}{\mathbf{1}}
\tikzstyle cross=[preaction={draw=white, -, line width=4pt}, thick]
\tikzstyle normal=[thick]
\tikzstyle chord=[densely dotted, thick]
\tikzstyle zero=[ultra thick, gray]
\tikzstyle zerocross=[preaction={draw=white, -, line width=4pt}, ultra thick, gray]
\tikzstyle point=[draw,circle,inner sep=1,fill=black]
\tikzstyle petitpoint=[draw,circle,inner sep=0.3,fill=black]
\newcommand{\negative}[3][-]{\draw[normal,#1] (#2+1,-#3).. controls (#2+1,-#3-0.3) and (#2,-#3-0.7)..(#2,-#3-1); \draw[cross,#1] (#2,-#3).. controls (#2,-#3-0.3) and (#2+1,-#3-0.7)..(#2+1,-#3-1);}
\newcommand{\positive}[3][-]{ \draw[normal,#1] (#2,-#3).. controls (#2,-#3-0.3) and (#2+1,-#3-0.7)..(#2+1,-#3-1);\draw[cross,#1] (#2+1,-#3).. controls (#2+1,-#3-0.3) and (#2,-#3-0.7)..(#2,-#3-1);}
\newcommand{\lmove}[3][-]{\draw[normal,#1] (#2+1,-#3).. controls (#2+1,-#3-0.3) and (#2,-#3-0.7)..(#2,-#3-1);}
\newcommand{\rmove}[3][-]{\draw[cross,#1] (#2,-#3).. controls (#2,-#3-0.3) and (#2+1,-#3-0.7)..(#2+1,-#3-1);}
\newcommand{\coupon}[4]{\draw[normal] (#1-0.2,-#2) rectangle (#1+#3+0.2,-#2-1);\node at (#1+#3*0.5,-#2-0.5) {#4};}
\newcommand{\up}[3][-]{ 
\draw[normal,#1] (#2,-#3).. controls (#2,-#3-0.3) and (#2+0.2,-#3-0.5).. (#2+0.5,-#3-0.5);
\draw[normal] (#2+0.5,-#3-0.5).. controls (#2+0.8,-#3-0.5) and (#2+1,-#3-0.3).. (#2+1,-#3);}
\newcommand{\ap}[3][-]{
\draw[normal,#1] (#2,-#3-1).. controls (#2,-#3-0.7) and (#2+0.2,-#3-0.5).. (#2+0.5,-#3-0.5);
\draw[normal] (#2+0.5,-#3-0.5).. controls (#2+0.8,-#3-0.5) and (#2+1,-#3-0.7).. (#2+1,-#3-1);}
\newcommand{\straight}[3][-]{\draw[normal,#1] (#2,-#3) -- (#2,-#3-1);}
\newcommand{\tik}[1]{\begin{tikzpicture}[baseline=(current bounding box.center), scale=0.8] #1 \end{tikzpicture} }
\def\HH{\hbox{${\mathcal H}$\kern-5.2pt${\mathcal H}$}}
\numberwithin{equation}{section}
\begin{document}
\title[Quantum character varieties]{Quantum character varieties and braided module categories}
\author{David Ben-Zvi}\address{Department of Mathematics\\University
  of Texas\\Austin, TX 78712-0257} \email{benzvi@math.utexas.edu}
\author{Adrien Brochier} \address{MPIM, Bonn}\email{abrochier@mpim-bonn.mpg.de}
\author{David Jordan} \address{School of Mathematics\\University of Edinburgh\\ Edinburgh, UK}\email{D.Jordan@ed.ac.uk}

\maketitle
\begin{abstract} 
We compute quantum character varieties of arbitrary closed surfaces with boundaries and marked points.  These are categorical invariants $\int_S\cA$ of a surface $S$, determined by the choice of a braided tensor category $\cA$, and computed via factorization homology.

We identify the algebraic data governing marked points and boundary components with the notion of a {\em braided module category} for $\cA$, and we describe braided module categories with a generator in terms of certain explicit algebra homomorphisms called {\em quantum moment maps}. We then show that the quantum character variety of a decorated surface is obtained from that of the corresponding punctured surface as a quantum Hamiltonian reduction.

Characters of braided $\cA$-modules are objects of the torus category $\int_{T^2}\cA$. We initiate a theory of character sheaves for quantum groups by identifying the torus integral of $\cA=\Repq  G$ with the category $\cD_q(G/G)\modu$ of equivariant quantum $\cD$-modules. When $G=GL_n$, we relate the mirabolic version of this category
to the representations of the spherical double affine Hecke algebra (DAHA) $\mathbb{SH}_{q,t}$.  
\end{abstract}

\tableofcontents

\section{Introduction}
Let $S$ denote a topological surface and $G$ a reductive group. The $G$-{\em character stack} $\uch_G(S)$ of $S$ is the moduli space of $G$-local systems on $S$, the quotient of the affine scheme of representations of the fundamental group of $S$ into $G$ by the conjugation action of $G$. Character stacks -- and their variants associated to surfaces with marked points or other decorations, which we collectively refer to as character varieties -- play a central role in geometry, representation theory and physics. A crucial feature of character stacks is their local nature --- they are obtained from gluing stacks of local systems on patches of $S$. As a result they provide a natural source of topological field theories: numbers, such point counts/Euler characteristics of character varieties appear in two-dimensional field theory; vector spaces such as sections of line bundles on character varieties appear in three-dimensional field theory; and finally, categories of sheaves on character varieties appear naturally in four-dimensional field theory.

We will be concerned with the four-dimensional setting, accessing character varieties through their categories of coherent sheaves, as they appear in the Betti form of the Geometric Langlands program and in twisted $d=4$ $\cN=4$ super-Yang-Mills theory, following the work of Kapustin-Witten~\cite{Ben-Zvi2016a}. Our goal is to construct and describe {\em quantum character varieties} -- $q$-deformations of these categories, which quantize the Goldman symplectic structure on the character stack (associated to a choice of invariant form on $\mathfrak g$). Moreover we endow these quantum character varieties with all of the structures expected from their origin in topological field theory, and develop their study as a natural setting for a variety of constructions in quantum algebra. 

In \cite{Ben-Zvi2015}, we initiated the construction of quantum character varieties via factorization homology of braided (or balanced) tensor categories $\cA$ on topological surfaces: this produces category-valued invariants of framed (or oriented) topological surfaces, with the desired strong functoriality and locality properties.
Starting from the braided tensor category $\cA=\Repq  G$ of integrable representations of the corresponding quantum group yields the desired functorial quantizations.  We computed these invariants for unmarked punctured surfaces, in terms of certain explicitly presented, and in many cases well-known, quantum algebras, which were constructed as certain twisted tensor products of the so-called ``reflection equation algebra" $\oa\in \cA$.

In the present paper, we extend this framework to the setting of closed surfaces, as well as to surfaces with marked points and boundaries. In brief, our main results are as follows:

\begin{itemize}[leftmargin=10pt]
	\item[$\bullet$] The possible markings of points (codimension two defects) in the topological field theory defined by $\cA$ are given by module categories over the monoidal category $\int_{Ann}\cA$. In~\cite{Ben-Zvi2015} we identified the underlying category with modules $\oa\modu_\cA$ for the {\em reflection equation algebra} of $\cA$. In Section~\ref{annulus-category-section} we explicitly identify the new induced monoidal structure on this category, the {\em field-goal tensor product}.
	
\item[$\bullet$] We show in Theorem \ref{thm:e2mod} that codimension two defects ($\int_{Ann}\cA$-modules) are identified with \emph{braided module categories} over $\cA$ (in the sense of~\cite{Brochier2013,Brochier2012,Enriquez2008}), in the same way that the unmarked disc is assigned $\cA$ (see Section~\ref{sec:factHom} for an introduction to factorization homology of marked surfaces). There are many natural examples of braided module categories (see below), including ones corresponding to versions of character varieties with parabolic structures, fixed conjugacy classes, or other boundary conditions (codimension one defects wrapping a circle).
They play the role for the 4d Kapustin-Witten (Betti Geometric Langlands) TFT that integrable representations of the loop group play for the 3d Chern-Simons (Witten-Reshetikhin-Turaev) theory.
	
\item[$\bullet$] In Theorem \ref{quantum-moment-map-theorem}, we identify braided module categories with a generator as modules for an algebra object $A_\cM\in \cA$ equipped with a ``quantum moment map", i.e. an algebra homomorphism, $\mu:\oa\to A_\cM$.  As we explain, $\mu$ is a quantum version of a group-valued moment map appearing in the classical setting \cite{Alekseev1998}.  	

\item[$\bullet$] We describe (in  Theorems~\ref{reconstruction}, Proposition~\ref{inducedactions}, and 
Corollary~\ref{annulus-corollary}) the result of gluing braided module categories with generators over their common braided $\cA$-action as a category of bimodules in $\cA$.
The quantum moment maps play a key role in defining the bimodule structure, and the resulting categories may be regarded as categorical quantum Hamiltonian reductions, along the respective quantum moment maps.

\item[$\bullet$] In particular, we compute the ``global functions" on general quantum character varieties: the endomorphisms of the quantum structure sheaf on a closed (or marked) surface are identified as the quantum Hamiltonian reduction of the algebra $A_{S^\circ}$ associated to a punctured surface along the corresponding quantum moment map.

\item[$\bullet$] The torus integral $\int_{T^2}\cA$ of a balanced tensor category $\cA$ is identified with 
the trace (or Hochschild homology) of the 2-category of braided $\cA$-modules, i.e., the natural receptacle for characters of braided modules. For $\cA=\Repq  G$ we identify the torus integral with the category $\cD_q(G/G)\modu$ of adjoint-equivariant quantum $\cD$-modules, thus providing an interpretation for these characters as quantum analogues of character sheaves.

\item[$\bullet$] For $G=GL_n$, the category $\cD_q(G/G)\modu$ has a ``mirabolic" version obtained by marking a single point in $T^2$ by the quantum ``Ruijsenaars-Schneider'' conjugacy class $\cN=A_t\modu_\cA$. We show that we recover Cherednik's spherical double affine Hecke algebra $\mathbb{SH}_{q,t}$, as the ``global functions" (endomorphisms of the quantum structure sheaf).  Hence, the global sections of any mirabolic quantum $\cD$-module carries a canonical action of the spherical double affine Hecke algebra.

\end{itemize}

\subsection{Braided module categories and quantum moment maps}
Factorization homology provides a general mechanism to construct invariants of $n$-manifolds starting from algebras $\cA$ over the $E_n$ (little $n$-disks) operad -- i.e., objects which carry operations labeled by inclusions of disks into a larger disk (see Section~\ref{sec:factHom} for a brief review). The invariant $\int_M \cA$ of an $n$-manifold is then defined as the universal recipient of maps from $\cA$ for every inclusion of a disk into $M$, and factoring through the operad structure.

There is a natural operadic notion of module $\cM$ for an $E_n$-algebra $\cA$, captured pictorially by placing the module at a marked point of a disk and allowing insertions of $\cA$ at disjoint disks. 
It is well-known (see e.g. \cite{Ayala2012,Ginot2014}) that the structure of $E_n$-module over an $E_n$-algebra $\cA$ on $\cM$ is equivalent to the structure of left module over the associative ($E_1$) ``universal enveloping algebra" $U(\cA)$, namely the factorization homology $U(\cA)=\int_{Ann}\cA$ of an annulus with coefficients in $\cA$. The latter category is equipped with an $E_1$ (monoidal) structure, coming from concatenation of annuli.  This is the structure used in the excision axiom on $Ann=S^{n-1}\times I$ (see Section~\ref{sec:BraidedModule} for more details) by which one computes factorization homology.

In~\cite{Ben-Zvi2015} we initiated the study of factorization homology of surfaces with coefficients in braided tensor categories, which are precisely the $E_2$-algebras in a certain 2-category $\cC=\LFP$ of linear categories with the Kelly-Deligne tensor product. (More precisely, braided tensor categories $\cA$ can be integrated over {\em framed} surfaces, while equipping $\cA$ with a {\em balanced} structure extends this integral to oriented surfaces.) In the same way, factorization homology of surfaces with marked points demands that for each marked point we give an $E_2$-module $\cM$ over our chosen braided tensor category $\cA$.

In Theorem~\ref{thm:e2mod}, we show that in the case $\cC=\LFP$, the notion of an $E_2$-module is equivalent to that of a ``braided module category", a concept introduced in \cite{Brochier2012,Brochier2013,Enriquez2008}, and closely related to the reflection equation algebra.  
We also introduce the notion of a ``balanced braided module category", which captures the structure of a $\Disk^2_{or}$-module (i.e. the oriented marked case), and we show in Theorem~\ref{thm:balancedModule} that, when $\cA$ itself is balanced, it endows any of its braided module categories with a canonical (though not unique) balancing.

Examples of braided module categories include the following:
\begin{enumerate}
\item The category $\cA$ itself is a braided $\cA$-module.  It corresponds to the ``vacuum marking", and is an essential component in our computation for unmarked surfaces.  
\item For any surface $S^\circ$ with circle boundary, the category $\int_{S^\circ}\cA$ is a braided module category, by insertions of annuli along the boundary.  In \cite{Ben-Zvi2015}, we identified $\int_{S^\circ}\cA$ with the category of modules for an explicitly constructed algebra $A_{S^\circ}$.
\item Quantizations of conjugacy classes in $G$, following \cite{Donin2002a,Donin2006,Donin2004, Kolb2009}, define braided module categories.  An important example is the so-called Ruijsenaars-Schneider conjugacy class, consisting of matrices which differ from the identity by a matrix of rank at most one \cite{Varagnolo2010,Jordan2014, Balagovic2016}.
\item Examples of braided module categories related to a variant of the trigonometric Knizhnik--Zamolodchikov equation, and the theory of dynamical quantum groups, appear in \cite{Brochier2012,Brochier2013,Enriquez2008}.
\item A ``boundary condition" is the local marking data for a half-plane.  Algebraically, this is the data of a tensor category $\cB$ attached to the boundary, together with a braided tensor functor $\cA\to Z(\cB)$ to the Drinfeld center of $\cB$.  The trace, or Hochschild homology category, of such a $\cB$ carries the structure of a braided module category for $\cA$. Important examples are provided by parabolic subgroups.
\end{enumerate}

In \cite{Ben-Zvi2015}, we identified $\int_{Ann}\cA$ with the category of modules for the ``reflection equation algebra" $\oa \in \cA$.  In the case $\cA=\Repq  G$, $\oa$ is a quantization of the coordinate algebra $\cO(G)$, equipped with its Semenov-Tian-Shansky Poisson bracket. In Section  \ref{annulus-category-section}, we prove the following theorem, giving yet a third reformulation of the notion of a braided module category, in terms of $\oa$:

\begin{theorem}\label{quantum-moment-map-theorem}
Let $\cM$ be a braided module category.
\begin{enumerate}
\item For every $M\in \cM$, we have a canonical homomorphism of algebras,
$$\mu_M:\oa\to \underline{\End}_\cA(M).$$
We call $\mu_M$ \emph{the quantum moment map} attached to $M$.
\item Assuming $M$ is a progenerator for the $\cA$-action\footnote{equivalently, for the $\int_{Ann}\cA$-action; see Theorem~\ref{inducedactions}}, we moreover have an equivalence of $\int_{Ann}\cA$-module categories,
$$\cM\simeq \underline{\End}_\cA(M)\modu_{\int_{Ann}\cA}.$$

\item The action of any $X\in \int_{Ann}\cA$ on any $N\in \cM \simeq\underline{\End}_\cA(M)\modu_{\int_{Ann}\cA}$ is given by relative tensor product,
$$N\bt X\mapsto N\underset{\oa}{\ot}X,$$
over the homomorphism $\mu_M$.
\end{enumerate}
Conversely, given an algebra $A\in\cA$ and a homomorphism $\mu:\oa\to A$, the category $\cM = A\modu_\cA$ is equipped with the structure of a braided module category, with action as in (3).  The regular $A$-module $A\in\cM$ is an $\cA$-progenerator in this case.
\end{theorem}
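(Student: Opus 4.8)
The plan is to understand the statement as a chain of equivalences which successively unpack the definition of a braided module category: first into a module over the monoidal category $\int_{Ann}\cA$ (which is the general operadic $E_2$-module picture recalled in the introduction, and which by Theorem~\ref{thm:e2mod} is equivalent to the braided module structure), then — using that $\int_{Ann}\cA \simeq \oa\modu_\cA$ — into the data of an $\oa$-linear structure, and finally, after choosing the progenerator $M$, into module theory over the internal endomorphism algebra $\underline{\End}_\cA(M)$. So I would organize the proof around three inputs: (i) the monadic/Barr--Beck reconstruction of a module category with a progenerator $M$ as modules over $\underline{\End}_\cA(M)$ internal to $\cA$ (this is the content flagged as Theorem~\ref{reconstruction}); (ii) the identification $\int_{Ann}\cA \simeq \oa\modu_\cA$ from \cite{Ben-Zvi2015}, together with the field-goal tensor structure from Section~\ref{annulus-category-section}; and (iii) the compatibility between the $\cA$-action and the $\int_{Ann}\cA$-action on $\cM$, which is exactly Theorem~\ref{inducedactions}.

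For part (1): the braided module structure on $\cM$ equips it with an action of $\int_{Ann}\cA \simeq \oa\modu_\cA$, and in particular the distinguished algebra object $\oa$ itself (the image of the regular module, or equivalently the monoidal unit's "partner") acts on each $M\in\cM$. I would extract $\mu_M$ by taking the action map $\oa \otimes M \to M$ — more precisely, the lax structure of the action as a functor out of $\oa\modu_\cA$ — and using the adjunction defining $\underline{\End}_\cA(M)$ (i.e. $\Hom_\cA(X\otimes M, M)\cong \Hom_\cA(X,\underline{\End}_\cA(M))$) to transpose it into a morphism $\oa \to \underline{\End}_\cA(M)$ in $\cA$. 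That this is an \emph{algebra} homomorphism is a diagram chase: it follows from the associativity/unitality constraints of the module structure, which say precisely that the composite of two $\oa$-actions agrees with the action of the field-goal product $\oa\,\widehat{\boxtimes}\,\oa \to \oa$, matched against the multiplication on $\underline{\End}_\cA(M)$. This is the step I expect to be the main obstacle: one must pin down the field-goal tensor product concretely enough on the object $\oa$ to see that transposing the action through the internal-Hom adjunction genuinely lands on the algebra multiplication of $\underline{\End}_\cA(M)$ rather than some twist of it — in other words the whole content is the commutativity of one large hexagon/pentagon built from the braided-module constraints and the definition of the monoidal structure on $\int_{Ann}\cA$, and getting the braidings and balancings to cancel correctly is the delicate part.

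For parts (2) and (3): once $M$ is a progenerator for the $\cA$-action (equivalently for the $\int_{Ann}\cA$-action, by Theorem~\ref{inducedactions}), I would invoke the internal-algebra reconstruction theorem to get $\cM \simeq \underline{\End}_\cA(M)\modu_\cA$ as plain categories with $\cA$-action; then I must upgrade this to an equivalence of $\int_{Ann}\cA$-module categories and identify the $\int_{Ann}\cA$-action with relative tensor product over $\mu_M$. The key point is that under the equivalence $\int_{Ann}\cA \simeq \oa\modu_\cA$, an object $X$ acts on $N \simeq \underline{\End}_\cA(M)\modu_\cA$ by first restricting its $\oa$-module structure along nothing (it is already an $\oa$-module) and tensoring: $N\otimes_{\oa} X$ makes sense precisely because $\underline{\End}_\cA(M)$ is an $\oa$-algebra via $\mu_M$, so $N$ is a right $\oa$-module, and the result is again an $\underline{\End}_\cA(M)$-module; comparing this with the original braided-module action is again a constraint-chasing argument, now made easy by the progenerator hypothesis since everything is determined by its value on $M$ itself, where both sides visibly compute $X$ acting on the unit. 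Finally, for the converse direction, given $\mu:\oa\to A$ I would simply \emph{define} the $\oa\modu_\cA$-action on $A\modu_\cA$ by $N\boxtimes X \mapsto N\otimes_{\oa} X$, check the associativity and unit constraints (these reduce to associativity of relative tensor product and the fact that $\oa$ is the unit of the field-goal product, which is where I use the results of Section~\ref{annulus-category-section}), transport it through $\int_{Ann}\cA \simeq \oa\modu_\cA$ to an honest $\int_{Ann}\cA$-module structure, and then invoke Theorem~\ref{thm:e2mod} once more to convert this back into a braided module structure; that $A$ itself is then an $\cA$-progenerator is immediate because $A\otimes_{\oa}(-)$, restricted appropriately, hits the regular module, and the regular module generates $A\modu_\cA$ under colimits with $\underline{\End}_\cA(A) = A$ recovering $\mu$.
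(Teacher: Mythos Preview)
Your proposal is correct and follows essentially the same route as the paper: reduce to $\int_{Ann}\cA$-modules via Theorem~\ref{thm:e2mod}, identify $\int_{Ann}\cA\simeq \oa\modu_\cA$, and then invoke monadic reconstruction (the results you cite as Theorem~\ref{reconstruction} and Theorem~\ref{inducedactions}) to pass to $\underline{\End}_\cA(M)$-modules. The paper organizes this by first proving the general reconstruction package for an arbitrary dominant tensor functor $F:\cA\to\cB$ (Section~\ref{reconstruction-section}), then specializing to $\cB=\int_{Ann}\cA$, $F=F_{bd}$.

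The one place your plan and the paper's execution differ in emphasis is part~(1). You flag the algebra-homomorphism property of $\mu_M$ as ``the main obstacle,'' to be discharged by an explicit field-goal diagram chase with braidings and balancings. The paper sidesteps this entirely via Proposition~\ref{momentmaps}: since $F_{bd}$ is a tensor functor, its right adjoint $F_{bd}^R$ is lax monoidal, so it sends algebras to algebras and algebra maps to algebra maps. One then simply observes that $\underline{\End}_\cA(M)\cong F_{bd}^R\bigl(\underline{\End}_{\int_{Ann}\cA}(M)\bigr)$ and that the unit $\mathbf{1}_{\int_{Ann}\cA}\to \underline{\End}_{\int_{Ann}\cA}(M)$ is tautologically an algebra map; applying $F_{bd}^R$ gives $\mu_M:\oa\to\underline{\End}_\cA(M)$ as an algebra homomorphism for free. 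No hexagon needs to be checked by hand. Your transposition-through-adjunction description is the same map, but the lax-monoidal packaging is what makes the multiplicativity automatic.
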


Natural examples of quantum moment maps arise in the following contexts:

\begin{enumerate}
\item The quantum moment map for $\cA$ itself is the co-unit homomorphism, $\epsilon:\oa\to \mathbf{1}_\cA,$ which quantizes the homomorphism of evaluation of a function at the identity element.
\item In Section~\ref{reconstruction-section} , we obtain canonical quantum moment maps $\mu: \oa \to A_{S^\circ}$ which control the braided module category structure on $\int_{S^\circ}\cA$.  These quantize the classical multiplicative moment maps, which send a local system to its monodromy around the puncture.
\item By their construction -- as equivariant quotients of the reflection equation algebra -- quantizations of conjugacy classes carry canonical quantum moment maps.
\end{enumerate}

\subsection{Computing factorization homology of marked surfaces}
Let us fix a surface $S$ equipped with a marked point $x\in S$, a disc $D_x$ containing $x$, and a braided module category $\cM$.  Let $S^\circ=S\setminus x$, and fix a disc embedding $i_x:D_x\subset S$, and resulting annulus embedding $Ann\simeq D_x\setminus x\subset S$.  We may then compute the factorization homology using excision,

$$\int_{(S,x)}\!\!\!\!\!\!\!(\cA,\cM_x) \simeq \int_{S^\circ}\!\!\!\cA \underset{\scalebox{0.75}{$\displaystyle{\int_{Ann}\!\!\!\!\!\!\cA}$}}{\bt} \cM_x.$$

Building on Theorem~\ref{quantum-moment-map-theorem} we can describe the tensor product above explicitly, in the language of quantum Hamiltonian reduction.  

\begin{theorem} Let $i_{x*}:\cM\to\int_{(S,x)}(\cA,\cM)$ denote the push-forward in factorization homology along the embedding $i_x$.
\begin{enumerate}
\item For any $M\in\cM$, we have a natural isomorphism:
$$\End\left(i_{x*}(M)\right) \cong A_{S^\circ}\dS_{\!\!\!\mu}M:=\Hom(\mathbf{1}_\cA, A_{S^\circ}\underset{\oa}{\ot} \underline{\End}_\cA(M)),$$
of the endomorphism algebra of $i_{x*}(M)$, in the category $\int_{(S,x)}(\cA,\cM_x)$ with the quantum Hamiltonian reduction of $A_{S^\circ}$ along the quantum moment map $\mu_M$.

\item Suppose that $M$ is an $\cA$-progenerator.  Then we have equivalences of categories,
\begin{align*}\int_{(S,x)}\!\!\!\!\!\!\!\!\!\left(\cA,\cM\right) \quad\simeq\quad \int_{S^\circ}\!\!\!\cA \underset{\scalebox{0.75}{$\displaystyle{\int_{Ann}\!\!\!\!\!\cA}$}}{\bt} \cM\quad\simeq\quad (A_{S^\circ}\bimodu \underline{\End}(M))_{\oa\modu},\end{align*}
with the category of bimodules for $A_{S^\circ}$ and $\underline{\End}(M)$, in the category $\oa\modu$.  

\item The quantum global sections functor
$$\Gamma = \Hom(i_*(M),-): \int_S\cA \to (A_{S^\circ}\dS_{\!\!\!\!\mu} M)^{op}\modu,$$
valued in the category of modules for the quantum Hamiltonian reduction is naturally equivariant for actions of the marked-and-colored mapping class group of the surface.
\end{enumerate}
\end{theorem}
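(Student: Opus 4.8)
The plan is to leverage Theorem~\ref{quantum-moment-map-theorem} throughout, reducing every statement to an explicit computation with the reflection equation algebra $\oa$, the punctured-surface algebra $A_{S^\circ}$, and the moment maps. For part (2), I would first invoke excision to identify $\int_{(S,x)}(\cA,\cM)$ with the relative tensor product $\int_{S^\circ}\cA \bigboxtimes{\int_{Ann}\cA} \cM$. By our earlier work \cite{Ben-Zvi2015}, $\int_{S^\circ}\cA \simeq A_{S^\circ}\modu_\cA$ as a module category over $\int_{Ann}\cA \simeq \oa\modu_\cA$, with the action controlled by the quantum moment map $\mu:\oa\to A_{S^\circ}$ of the second displayed list of examples. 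On the other side, since $M$ is an $\cA$-progenerator, Theorem~\ref{quantum-moment-map-theorem}(2) gives $\cM \simeq \underline{\End}_\cA(M)\modu_{\int_{Ann}\cA}$, with the $\int_{Ann}\cA$-action given by relative tensor product over $\mu_M:\oa\to\underline{\End}_\cA(M)$ as in part (3) of that theorem. The relative tensor product of module categories over $\oa\modu_\cA$ is then computed by the standard bar construction, which identifies it with the category of $(A_{S^\circ},\underline{\End}(M))$-bimodule objects internal to $\oa\modu_\cA$ — this is exactly the desired description $(A_{S^\circ}\bimodu\underline{\End}(M))_{\oa\modu}$. The key input is that the relevant categories are modules for algebras internal to $\cA$, so that the Eilenberg--Watts/Morita-theoretic computation of $\cC\bigboxtimes{\cD}\cE$ in terms of bimodule objects applies verbatim in the $2$-category $\LFP$.

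For part (1), I would specialize: the endomorphism algebra $\End(i_{x*}(N))$ in the tensor product category is computed as $\Hom_{\cA}(\mathbf{1}_\cA,-)$ applied to the internal endomorphism object of $i_{x*}(N)$. Tracing through the bar-complex identification, the image of $N$ under pushforward corresponds to the bimodule $A_{S^\circ}\otimes_{\oa}\underline{\End}_\cA(N)$, whose internal endomorphisms unwind to $\Hom(\mathbf{1}_\cA, A_{S^\circ}\otimes_{\oa}\underline{\End}_\cA(N))$ — which is precisely the definition of the quantum Hamiltonian reduction $\cA_{S^\circ}\dS_{\mu}N$ written in the statement. One must check that the algebra structure on this $\Hom$-space (coming from composition of endomorphisms) agrees with the ``Hamiltonian reduction'' algebra structure (coming from the algebra structure on $A_{S^\circ}\otimes_{\oa}\underline{\End}_\cA(N)$ after passing to $\oa$-invariants); this is a routine but essential compatibility, following from the fact that $\Hom(\mathbf{1}_\cA,-)$ is lax monoidal and that the internal-End object here is genuinely an algebra object. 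Note this part does not require $N$ to be a progenerator, only that $i_{x*}(N)$ is a well-defined object; the progenerator hypothesis is what upgrades the $\End$-algebra computation to an equivalence of categories in part (2).

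For part (3), I would observe that $\int_S\cA$ (with the vacuum marking absorbed, or after fixing the colored point) is generated by $i_*(N)$ precisely when $N$ is an $\cA$-progenerator, by combining part (2) with the fact that $A_{S^\circ}$ is an $\cA$-progenerator in $\int_{S^\circ}\cA$ from \cite{Ben-Zvi2015}; hence $\Gamma = \Hom(i_*(N),-)$ is an equivalence onto $(A_{S^\circ}\dS_{\mu}N)^{op}\modu$ by the usual progenerator/Morita argument. The mapping-class-group equivariance is then formal from the functoriality of factorization homology: the marked-and-colored mapping class group acts on $\int_S\cA$ through its functoriality in the decorated surface (homeomorphisms fixing the marking data up to isotopy induce autoequivalences), and since $\Gamma$ is defined by a universal construction — $\Hom$ out of the distinguished object $i_*(N)$, which is canonical once the disc $D_x$ and coloring $N$ are fixed — the action transports to the target. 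I expect the main obstacle to be the careful bookkeeping in part (1): pinning down the identification of the algebra structure on the reduction with composition of endomorphisms requires tracking the braiding and the moment-map twist through the bar construction, and making sure the ``side'' conventions ($\mu$ versus $\mu^{op}$, left versus right $\oa$-module structures induced by the two boundary insertions) are consistent with the annulus concatenation used in excision. Everything else is a matter of assembling the internal-algebra Morita theory in $\LFP$ with the structural results already established in the preceding sections.
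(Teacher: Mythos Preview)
Your plan for parts (1) and (2) is essentially the paper's. For (2) the paper packages the bar-construction/Morita step as Corollary~\ref{cor:bimoduleGluing} and its specialization Corollary~\ref{annulus-corollary}, exactly as you outline. For (1) the paper does not trace through the bar complex; instead it invokes a result of Douglas--Schommer-Pries--Snyder~\cite{Douglas2013} giving $\Hom_{\cM\bt_\cC\cN}(m\bt n, m'\bt n')\cong \Hom_\cC(\mathbf{1}_\cC,\underline{\Hom}(m,m')\ot\underline{\Hom}(n,n'))$ directly, and then manipulates via $F_{bd}$ and its right adjoint. Your approach would recover this formula, so the difference is one of packaging rather than substance; the paper's route avoids having to re-derive the Hom formula in relative tensor products.

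There is, however, a genuine error in your treatment of part (3). You claim that $i_*(N)$ is a generator of $\int_S\cA$ whenever $N$ is an $\cA$-progenerator, and hence that $\Gamma$ is an \emph{equivalence}. The paper makes no such claim, and it is false in general: already for $\cA=\Rep G$ and $S$ closed, $\int_S\cA\simeq\QCoh(\uch_G(S))$ and the structure sheaf $\cO_{\uch_G(S)}$ is not a generator (the character stack is not affine). The point is that progenerators on each side of a relative tensor product $\cM\bt_\cC\cN$ do not yield a generator of the tensor product as a plain category; the monadic reconstruction in parts (1)--(2) produces a \emph{bimodule} description, not a single-algebra one. The actual content of (3) is only that $\Gamma$ is a well-defined functor landing in modules for the Hamiltonian reduction algebra (by part (1)), and that it is mapping-class-group equivariant because $i_*(N)$ is canonically determined by the marking data and factorization homology is functorial under decorated embeddings. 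Your ``formal from functoriality'' remark is correct for the equivariance; just drop the equivalence claim.
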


This Theorem (more generally in the case of several marked points) is proved in Section~\ref{reconstruction-section}.

In particular we obtain a description of the category associated to a closed unmarked surface $S$. Choose some disk $D^2\subset S$, and let $S^\circ$ denote its complement in $S$. 
\begin{corollary}
We have an equivalence of categories,
$$\int_S\cA \simeq \int_{S^\circ}\!\!\!\cA \underset{\scalebox{0.75}{$\displaystyle{\int_{Ann}\!\!\!\!\!\cA}$}}{\bt} \cA \simeq (A_{S\setminus D^2}\bimodu \mathbf{1}_\cA)_{\oa\modu},$$
with the category of bimodules for $A_{S^\circ}$ and $A_{D^2}=\mathbf{1}_\cA$, in the category $\oa\modu$.  
\end{corollary}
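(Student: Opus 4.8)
The plan is to deduce the Corollary as the special case of the preceding Theorem in which the marked point is colored by the vacuum braided module category $\cM=\cA$ itself. First I would recall from the list of examples that $\cA$ is a braided $\cA$-module corresponding to the ``vacuum marking'', and observe that marking a point $x$ by $\cA$ produces the same invariant as leaving the surface unmarked: concretely, the disc $D_x$ colored by $\cA$ glued into $S$ is, by the module-axiom compatibility and the fact that $\int_{D^2}\cA=\cA$, identical to simply gluing an undecorated disc $D^2$ back into $S^\circ$. This identifies $\int_S\cA$ with $\int_{(S^\circ,x)}(\cA,\cA)$, which by excision on the annulus neighborhood of $x$ is $\int_{S^\circ}\cA\bigboxtimes{\int_{Ann}\cA}\cA$, giving the first equivalence.

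Next I would apply part (2) of the previous Theorem with $M$ taken to be the regular module $\mathbf{1}_\cA\in\cA$, which is an $\cA$-progenerator (it generates $\cA$ under the action and is projective, since $\cA$ has enough structure for $\mathbf{1}_\cA$ to be a progenerator of itself as a module category over itself — this is exactly the setting of the ``Conversely'' clause of Theorem~\ref{quantum-moment-map-theorem} applied to the co-unit $\epsilon:\oa\to\mathbf{1}_\cA$). Then $\underline{\End}_\cA(\mathbf{1}_\cA)\cong \mathbf{1}_\cA$ as an algebra, and the quantum moment map $\mu_{\mathbf 1}$ is precisely the co-unit $\epsilon:\oa\to\mathbf{1}_\cA$, as recorded in example (1) following the quantum moment map theorem. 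Substituting into the equivalence $\int_{(S,x)}(\cA,\cM)\simeq (A_{S^\circ}\bimodu \underline{\End}(M))_{\oa\modu}$ yields $(A_{S\setminus D^2}\bimodu\mathbf{1}_\cA)_{\oa\modu}$, which is the claimed description, with $A_{D^2}=\mathbf{1}_\cA$ recognized as $\underline{\End}_\cA$ of the vacuum module.

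The one genuine point requiring care — and the step I expect to be the main obstacle — is justifying rigorously that marking a point by the vacuum module $\cA$ reproduces the unmarked invariant, i.e. that $\int_{(S,x)}(\cA,\cA)\simeq\int_S\cA$ compatibly with all the module structures in play. This is morally obvious from the ``placing $\cA$ at a marked point of a disc'' picture, but to be careful one should check it at the level of the defining colimit (factorization homology as a colimit over the $\Disk$-category of the surface, with one object colored by the module): adding an object labelled by the terminal/vacuum module does not change the colimit because the relevant comma category is cofinal. Alternatively, and perhaps more cleanly, one can simply invoke excision directly: $\int_S\cA\simeq\int_{S^\circ}\cA\bigboxtimes{\int_{Ann}\cA}\int_{D^2}\cA$ with $\int_{D^2}\cA=\cA$ viewed as the $\int_{Ann}\cA$-module it is by the excision axiom, and then note this $\int_{Ann}\cA$-module structure on $\cA$ is exactly the braided module category structure of example (1) — this is essentially the content of Theorem~\ref{thm:e2mod} specialized to the vacuum. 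Once that identification is in hand, the Corollary is a direct substitution into the Theorem and requires no further computation.
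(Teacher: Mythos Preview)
Your proposal is correct and matches the paper's approach. In fact, your ``alternative'' route---invoking excision directly to write $\int_S\cA\simeq\int_{S^\circ}\cA\bigboxtimes{\int_{Ann}\cA}\int_{D^2}\cA$ with $\int_{D^2}\cA=\cA$, and then applying the bimodule description of the relative tensor product (Corollary~\ref{annulus-corollary})---is precisely how the paper proves this in Section~\ref{sec:closed}, so your worry about justifying ``vacuum marking $=$ unmarked'' is simply bypassed there. The specialization-from-the-marked-Theorem argument you lead with is also fine and is how the result is \emph{presented} in the Introduction, but note that in the paper's logical order the unmarked case is established first and the marked case (Theorem~\ref{marked-surface-category-theorem}) is the generalization, not the other way around.
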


Likewise we can identify global functions on the quantum character variety (i.e., endomorphisms of the quantum structure sheaf $\cO_{\cA,S})$ with the quantum Hamiltonian reduction of $A_{S^\circ}$ along the quantum moment map $\mu$, $$\End(\cO_{\cA,S})\cong \left(A_{S^\circ} \Big{/} A_{S^\circ}\cdot\mu(\ker(\epsilon))\right)^{inv}.$$
Thus we have a global sections functor 
$$\Gamma = \Hom(\cO_{\cA,S},-): \int_S\cA \to (\cA_{S^\circ}\dS_{\!\!\!\mu}\mathbf{1}_\cA)^{op}\modu,$$
valued in the category of modules for the quantum Hamiltonian reduction, equivariant for an action of the mapping class group of the surface.

\subsection{Quantization of character varieties}
In the classical setting $\cA=\Rep G$, it was a fundamental observation of~\cite{Alekseev1998} that $G$ character varieties of closed surfaces could be obtained via ``multiplicative Hamiltonian reduction" of their punctured counterparts.  Let us briefly recall the classical construction here.

Let $S^\circ$ be a surface with one distinguished circle boundary component with a point $p$ chosen on it. Let $\RV(S^\circ)$ denote the representation variety of $S^\circ$, i.e.
\[
	\RV(S^\circ)=\{\rho:\pi_1(S,p)\rightarrow G \}.
\]
Equivalently, $\RV(S^\circ)$ is the variety of $G$-local systems on $S^\circ$, equipped with a trivialization of the fiber at $p$.  Changing the choice of trivialization amounts to conjugating a given homomorphism by a group element.  Hence, the $G$-character stack of $S$ is the quotient stack
\[
	\uch_G(S^\circ)=\RV(S^\circ)/G.
\]
The embedding of the annulus around the circle boundary of $S$ induces a $G$-equivariant map,
\[
	\mu:\RV(S^\circ)\longrightarrow \RV(Ann)= G.
\]
The map $\mu$ is called a ``multiplicative" or ``group-valued" moment map, in~\cite{Alekseev1998}.

Fix a conjugation invariant subvariety $C\subset G$ (i.e., a union of conjugacy classes).  Then $\mu^{-1}(C) \subset \RV(S)$ is a $G$-stable subvariety. The character stack of $(S,C)$ is then the quotient stack
\[
	\uch_G(S,C)=	\mu^{-1}(C)/G.
\]
In other words, $\uch_G(S,C)$ is a moduli stack of $G$-local systems on $S^\circ$ whose monodromy around the boundary lies in $C$.  By definition, the category $\QCoh(\uch_G(S,C))$ is the category of $G$-equivariant quasi-coherent sheaves on $\mu^{-1}(C)$.  The variety $\mu^{-1}(C)/G$ is called the ``multiplicative Hamiltonian reduction" of $\RV(S^\circ)$ along $\mu$.

The main results of \cite{Ben-Zvi2013, Ben-Zvi2015}, give identifications,
$$\int_{S^\circ} G\modu \simeq \QCoh(\uch(S^\circ)),\qquad A_{S^\circ} \cong \cO(G^{2g}).$$

The category $\QCoh(C/G)$ is a braided module category for $\Rep G$, which we may associate to the puncture, and the category $\QCoh(\mu^{-1}(C)/G)$ is precisely the category produced by factorization homology for a marked surface $(S,x)$, where the marked point is decorated with $\QCoh(C/G)$.

When we instead take $\cA=\Repq G$, excision gives rise to ``quantum multiplicative Hamiltonian reduction", as discussed in e.g.~\cite{Jordan2014}. Therefore, we obtain quantizations of character stacks of closed surfaces.  Taking global sections passes to the affinization of the character stack, the Poisson variety obtained as the categorical quotient $\RV(S)\sslash G.$  More generally if $C$ is any conjugacy class, the categorical quotient $\mu^{-1}(C)\sslash G$ is a symplectic leaf of the Poisson variety $\RV (S)\sslash G$ (the case of the closed surface corresponds to $C=\{e\}$. In~\cite{Donin2006,Donin2004,Donin2002a} an explicit quantization of any given conjugacy class is given, using Verma modules: by construction these come equipped with an equivariant algebra map from $\cO_q(G)$. By Theorem~\ref{quantum-moment-map-theorem} the category of equivariant modules over this algebra is a braided module category, hence its factorization homology over the marked surface as above gives a quantization of the variety $\mu^{-1}(C)\sslash G$.

\subsection{Towards quantum character sheaves}
The invariant assigned to the torus $S=T^2$ plays a central role in topological field theory. 
In three-dimensional Chern-Simons/Witten-Reshetikhin-Turaev theory at level $k$ the invariant of $T^2$ is identified on the one hand with the Verlinde algebra, the group of characters of integrable level $k$ representations of the loop group (which themselves form the invariant of $S^1$, i.e., the Wilson lines or codimension two defects). The natural mapping class group symmetry of the torus invariant then explains the well-known modularity of these characters. On the other hand the Freed-Hopkins Teleman theorem~\cite{Freed2011} identifies the $T^2$ invariant with a version of class functions on the compact group, namely the twisted equivariant $K$-theory of $G_c/G_c$. In this section we describe the corresponding roles of the quantum character variety of the torus.

Let us consider the oriented field theory defined by integrating a balanced tensor category $\cA$ on oriented surfaces.  
We have identified codimension two defects for quantum $\cA$-character varieties with braided $\cA$-modules, i.e., modules for 
$U(\cA)=\int_{Ann}\cA$, which thanks to the balancing is also identified (as monoidal category) with the cylinder integral $\int_{Cyl}\cA$ (see Remark~\ref{different framings}). In the language of extended topological field theory, this means we attach the 2-category $U(\cA)\modu$ to the circle.

The excision axiom applied to a decomposition of $T^2$ into cylinders allows us to identify the torus integral as the monoidal Hochschild homology, or trace, of the cylinder (hence annulus) integral of $\cA$:
$$\int_{T^2}\cA\simeq U(\cA)\boxtimes_{U(\cA)\boxtimes U(\cA)^{op}} U(\cA)= Tr(U(\cA)).$$ Equivalently, we can describe the torus integral as the trace (or Hochschild homology) of the 2-category of braided $\cA$-modules. It follows from the general theory of characters in 
Hochschild homology (see e.g.~\cite{Ben-Zvi2013} for references) that $\int_{T^2}\cA$ carries characters 
$[\cM]\in \int_{T^2}\cA$ for sufficiently finite (i.e., dualizable) braided $\cA$-modules $\cM$. 

\begin{remark}[Braided $G$-categories and loop group categories] The 2-category of braided $\Repq  G$-modules 
is a 4d gauge theory analog of the category of integrable level $k$ representations of the loop group. Indeed, it is the Betti form~\cite{Ben-Zvi2016a} of the 2-category of chiral module categories over the Kazhdan-Lusztig category of integrable representations of the loop algebra, which itself is a form of the local geometric Langlands 2-category of categories with an action of the loop group at level $k$ (where $q$ is essentially $\exp(2\pi i/k)$), see~\cite{Gaitsgory2016}.
\end{remark}

\medskip

In the case $\cA=\Repq  G$, in \cite{Ben-Zvi2015} we identified the punctured torus category with the category of modules in $\Repq  G$ for the algebra of quantum differential operators, considered as an algebra object in $\Repq  G$ under the adjoint action:
$$\int_{T^2\setminus D^2}\Repq (G)\simeq \cD_q(G)\modu_{\Repq  G}.$$  
Note that since we are considering modules in $\Repq  G$ rather than $Vect$, this category is a quantum analog not of the category of $\cD$-modules on $G$ but of the category of $\cD$-modules on $G$ which are weakly equivariant for the adjoint action (from which the former can be obtained by de-equivariantization). It follows from the quantum Hamiltonian reduction formalism of the previous section that sealing up the puncture results in imposing the quantum moment map relations for the adjoint action -- i.e., in imposing {\em strong} equivariance.

  We define:

\begin{definition}
The category $\cD_q(\frac{G}{G})\modu$ of strongly ad-equivariant $\cD_q(G)$-modules has its objects pairs $(M,\phi)$ consisting of a $\cD_q(G)$-module $M\in\cD_q(G)\modu_\cA$, and an action map, $\phi:M\underset{O_\cA}{\ot} \mathbf{1}_\cA\to M$ in the category $\cD_q(G)\modu_\cA$, satisfying the associativity conditions making $M$ into an $\mathbf{1}_{\cA}$-module.
\end{definition}

As a corollary of Theorem~\ref{quantum-moment-map-theorem}, we have:

\begin{theorem}\label{ad-equiv}\label{torus-int}
We have equivalences of categories,
$$\int_{T^2}\Repq  G \simeq Tr(U(\cA))\simeq \cD_q(\frac{G}{G})\modu.$$
\end{theorem}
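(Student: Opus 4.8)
The plan is to deduce Theorem~\ref{torus-int} by combining the excision computation of the torus integral with Theorem~\ref{quantum-moment-map-theorem} and the identification of the punctured-torus category in~\cite{Ben-Zvi2015}. Concretely, we first invoke excision in the form already recorded in the introduction,
$$\int_{T^2}\Repq G \;\simeq\; \int_{T^2\setminus D^2}\Repq G \underset{\scalebox{0.75}{$\displaystyle{\int_{Ann}\Repq G}$}}{\bt}\, \cA,$$
where the braided module category filling in the removed disc is the vacuum marking $\cA=\Repq G$ itself (item (1) in the list of examples). The second factor in this relative tensor product has generator $\mathbf{1}_\cA$, whose quantum moment map (by the first example after Theorem~\ref{quantum-moment-map-theorem}) is the co-unit $\epsilon:\oa\to\mathbf{1}_\cA$. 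So the hypotheses of the marked-surface theorem apply with $S=T^2$, $\cM=\cA$, $M=\mathbf{1}_\cA$, and $A_{S^\circ}=A_{T^2\setminus D^2}=\cD_q(G)$, the latter being exactly the identification of the punctured torus algebra from~\cite{Ben-Zvi2015} recalled just above the statement.

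Next I would unwind what the marked-surface theorem produces in this case. Part (2) of that theorem gives
$$\int_{T^2}\Repq G \;\simeq\; \big(\cD_q(G)\bimodu \underline{\End}(\mathbf{1}_\cA)\big)_{\oa\modu} \;=\; \big(\cD_q(G)\bimodu \mathbf{1}_\cA\big)_{\oa\modu},$$
since $\underline{\End}_\cA(\mathbf{1}_\cA)\cong\mathbf{1}_\cA$. A right $\mathbf{1}_\cA$-module structure is no data at all, so a $(\cD_q(G),\mathbf{1}_\cA)$-bimodule object in $\oa\modu$ is the same thing as a $\cD_q(G)$-module in $\oa\modu$ together with the extra structure recording that the $\oa$-action factors (via $\epsilon$) through a $\mathbf{1}_\cA$-action — and this is precisely the pair $(M,\phi)$ of the Definition preceding the theorem, i.e. an object of $\cD_q(\tfrac{G}{G})\modu$. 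I would spell out this dictionary carefully: the $\cD_q(G)$-module-in-$\Repq G$ is $M$, the quantum moment map relations imposed by the relative tensor product over $\int_{Ann}\cA$ along $\mu$ and $\epsilon$ are exactly the associativity conditions making $\phi:M\otimes_{O_\cA}\mathbf{1}_\cA\to M$ an action, and morphisms on both sides match. The middle equivalence $\int_{T^2}\cA\simeq Tr(U(\cA))$ is the excision/Hochschild-homology statement already displayed in the text (decompose $T^2$ into two cylinders), so nothing new is needed there beyond citing it.

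The main obstacle I expect is not any single hard estimate but rather making the dictionary of the last paragraph fully precise — in particular checking that ``strong ad-equivariance'' in the sense of the Definition is literally the bimodule-in-$\oa\modu$ condition coming out of Theorem~\ref{quantum-moment-map-theorem}(2), including the compatibility of the two $\oa$-actions (one through the reflection-equation/quantum-moment-map $\mu:\oa\to\cD_q(G)$ describing the braided module structure on $\cD_q(G)\modu_{\Repq G}$, the other through $\epsilon$) with the balancings, so that the equivalence is one of module categories and not merely of underlying categories. One should also confirm that $\mathbf{1}_\cA$ is indeed an $\cA$-progenerator for the vacuum module (this is the last sentence of Theorem~\ref{quantum-moment-map-theorem}, applied to $A=\mathbf{1}_\cA$, $\mu=\epsilon$) so that part (2) rather than only part (1) of the marked-surface theorem is available. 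Once these identifications are in place the chain of equivalences closes up and the theorem follows.
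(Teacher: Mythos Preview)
Your proposal is correct and is precisely the argument the paper has in mind: the paper simply states Theorem~\ref{torus-int} ``as a corollary of Theorem~\ref{quantum-moment-map-theorem}'' with no further proof, and your write-up is exactly the intended unwinding --- excision to reduce to the punctured torus glued to the vacuum disc, the identification $A_{T^2\setminus D^2}\cong\cD_q(G)$ from~\cite{Ben-Zvi2015}, the bimodule description of the closed-surface category from the marked-surface theorem (equivalently Corollary~\ref{annulus-corollary}), and the observation that a $(\cD_q(G),\mathbf{1}_\cA)$-bimodule in $\oa\modu_\cA$ is exactly a pair $(M,\phi)$ as in the Definition of $\cD_q(\tfrac{G}{G})\modu$. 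The only phrase to tighten is ``the $\oa$-action factors (via $\epsilon$) through a $\mathbf{1}_\cA$-action'': the right $\mathbf{1}_\cA$-module datum is not a factoring of the existing $\oa$-action but rather the additional map $\phi:M\otimes_{\oa}\mathbf{1}_\cA\to M$ in $\cD_q(G)\modu_\cA$, which you yourself identify correctly in the next sentence.
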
 

In particular $\cD_q(\frac{G}{G})\modu$ inherits an action of $\widetilde{SL_2(\ZZ)}$, including a quantum Fourier transform ($S$-transformation) generalizing the difference Fourier transform in the case $G=H$ a torus. Indeed the endomorphisms of the quantum structure sheaf are known in many cases (see below for the $t$-analog) to recover $$\End(\cO_{\Repq  G,T^2}) \cong \cD_q(H)^W,$$
the algebra of $W$-invariant $q$-difference operators on the torus $H$.  This should be compared to the computation of~\cite{Frohman2000}.

It follows that, in analogy with the Freed-Hopkins-Teleman theorem for Chern-Simons theory, 
the characters of braided $\Repq  G$-modules form quantum $\cD$-modules on $G/G$.
This is a quantum analog of the interpretation~\cite{Ben-Zvi2009} of Lusztig character sheaves in $\cD(G/G)\modu$ in terms of module categories for $\cD$-modules on $G$. We likewise expect a theory of quantum character sheaves to provide a natural $q$-analog of the Lusztig theory. Interesting examples of such quantum character sheaves are provided by the quantum Springer sheaves --- the characters of the braided module category $\tr(\Repq  B)$ associated to the $\Repq  G$-algebra defined by the quantum Borel (or other parabolics), which can be expected via a quantum Hotta-Kashiwara theorem to be described by a q-analog of the Harish-Chandra system.  In particular one expects the entire torus category $\cD_q(\frac{G}{G})\modu$ to carry a ``quantum generalized Springer" orthogonal decomposition into blocks labeled by cuspidal objects associated to Levi subgroups, in analogy with the results of~\cite{Gunningham2015} for $\cD({\mathfrak g}/G)$.

\subsection{The double affine Hecke algebra}
The double affine Hecke algebra (abbreviated DAHA, and denoted $\mathbb{H}_{q,t}$) associated to $G=GL_N$ (or more generally to a reductive group $G$) is a celebrated two-parameter deformation of the group algebra of the double affine Weyl group of $G$, introduced by Cherednik. It contains as a subalgebra the spherical DAHA (denoted $\mathbb{SH}_{q,t}$), which is a flat one-parameter deformation of the algebra $\cD_q(H)^W$ of $W$-invariant $q$-difference operators on the torus $H\subset G$. 
The spherical DAHA for $GL_n$ appears naturally~\cite{Oblomkov2004} as a quantization of the
phase space of the trigonometric Ruijsenaars-Schneider system~\cite{Ruijsenaars1986,Fehér2013,Feher2012,Feher2014} (also known as the relativistic version of the trigonometric Calogero-Moser system), a many-body particle system with multiplicative dependence on both positions and momenta. The phase space of this integrable system in turn has a well-known interpretation~\cite{Gorsky1995,Fock1999} in terms of the character variety of the torus, marked by a distinguished ``mirabolic" conjugacy class at one point. Namely it is identified with a space of ``almost-commuting" matrices, invertible matrices whose commutator lies in a minimal conjugacy class (differing from a scalar matrix by a matrix of rank one).

In this section, we will explain how our theory of quantum character varieties naturally 
produces the spherical DAHA when fed the torus $T^2$ marked by the quantum 
mirabolic conjugacy class. 

It is known that the spherical DAHA $\mathbb{SH}_{q,t}$ may be obtained from the algebra $\cD_q(G)$ of quantum differential operators on $G$ by quantum Hamiltonian reduction along the quantum moment map $\mu_q:\cO_q(G)\rightarrow \cD_q(G)$ at a certain equivariant two-sided ideal $\mathcal{I}_t\subset \cO_q(G)$, depending on a parameter $t$.  The ideal $\mathcal{I}_t$ is a canonical $q$-deformation of the variety of matrices which differ from the scalar matrix $t\cdot \id$ by a matrix of rank at most one.  These results give rise to multiplicative analogues of the relation between the trigonometric Cherednik algebra (quantizing the trigonometric Calogero-Moser phase space) and mirabolic $\cD$-modules, see~\cite{Gan2006,Etingof2007,Finkelberg2010}.

\begin{theorem}\label{known-results} We have an isomorphism of algebras $$(A_{(T^2)^\circ}/\mathcal{I}_{t})^{U_q\mathfrak{gl}_N} \cong \mathbb{SH}_{q,t}(GL_N),$$
in the following settings:
\begin{enumerate}
\item When $q$ is a root of unity \cite{Varagnolo2010},
\item When $q=e^\hbar$ ($\hbar$ formal) for the Drinfeld-Jimbo category \cite{Jordan2014},
\item For arbitrary $q\in\CC^\times$ when $N=2$ \cite{Balagovic2016}.
\end{enumerate}
\end{theorem}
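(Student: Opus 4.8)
The plan is to reduce the statement to existing identifications in the literature rather than to prove each case from scratch, since the theorem explicitly attributes the three cases to \cite{Varagnolo2010, Jordan2014, Balagovic2016}. The unifying content we must supply is the translation dictionary: that the algebra $A_{(T^2)^\circ}$ produced by our factorization homology machine coincides with the algebra $\cD_q(GL_N)$ of quantum differential operators studied in those references, that our quantum moment map $\mu\colon\oa\to A_{(T^2)^\circ}$ matches their $\mu_q\colon\cO_q(G)\to\cD_q(G)$, and that the equivariant ideal $\mathcal{I}_t\subset\oa$ we use to mark the mirabolic point agrees with the ideal $\mathcal{I}_t\subset\cO_q(G)$ appearing in each of those papers. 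Granting this dictionary, the isomorphism $(A_{(T^2)^\circ}/\mathcal{I}_t)^{U_q\mathfrak{gl}_N}\cong\mathbb{SH}_{q,t}(GL_N)$ is precisely the main computation of each cited reference in the regime of validity indicated.

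First I would recall from \cite{Ben-Zvi2015} the explicit presentation of $A_{(T^2)^\circ}$: the punctured torus is a once-punctured genus-one surface, so $A_{(T^2)^\circ}$ is the twisted tensor product $\oa\,\widetilde{\otimes}\,\oa$ of two copies of the reflection equation algebra, with the braided commutation relations recording the intersection pattern of the two handle curves. Second I would identify this braided algebra with $\cD_q(GL_N)$ as an algebra object in $\Repq GL_N$ under the adjoint action — this is exactly the identification $\int_{T^2\setminus D^2}\Repq G\simeq\cD_q(G)\modu_{\Repq G}$ quoted in the excerpt, and at the level of algebras it sends one copy of $\oa$ to the ``functions'' $\cO_q(G)$ and the other to the ``vector fields''/translations, so that $A_{(T^2)^\circ}\cong\cD_q(G)$. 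Third, by Theorem~\ref{quantum-moment-map-theorem}(2) and the computation of the boundary gluing in Section~\ref{reconstruction-section}, the braided module structure on $\int_{(T^2)^\circ}\cA$ along the boundary annulus is controlled by the canonical quantum moment map $\mu_M\colon\oa\to A_{(T^2)^\circ}$, and under the dictionary of the previous step this is the standard quantum moment map $\mu_q\colon\cO_q(G)\to\cD_q(G)$ given by the left (or adjoint) coaction. Fourth, marking the distinguished point of $T^2$ by the quantum mirabolic conjugacy class $\cN=A_t\modu_\cA$ means, via Theorem~\ref{quantum-moment-map-theorem}, taking $A_t=\oa/\mathcal{I}_t$ for the Donin–Mudrov–Kolb style equivariant ideal; the global functions on the resulting quantum character variety are then, by the last displayed formula in the introduction, $\End(\cO_{\cA,T^2,\cN})\cong (A_{(T^2)^\circ}/A_{(T^2)^\circ}\cdot\mu(\mathcal{I}_t))^{inv}$, which under the dictionary is $(\cD_q(G)/\cD_q(G)\mu_q(\mathcal{I}_t))^{U_q\mathfrak{gl}_N}=(A_{(T^2)^\circ}/\mathcal{I}_t)^{U_q\mathfrak{gl}_N}$ in the notation of the theorem. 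Finally, in each of the three regimes this quantum Hamiltonian reduction is identified with $\mathbb{SH}_{q,t}(GL_N)$: this is Theorem~1.4 of \cite{Varagnolo2010} at roots of unity, Theorem~1.2 of \cite{Jordan2014} for $q=e^\hbar$ in the Drinfeld–Jimbo category, and the main result of \cite{Balagovic2016} for arbitrary $q\in\CC^\times$ when $N=2$.

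The main obstacle is the bookkeeping in the third and fourth steps: one must check that the \emph{specific} equivariant ideal $\mathcal{I}_t$ produced by our marked-point formalism — which a priori is only characterized abstractly as the kernel of a braided module category map $\oa\to A_t$ realizing the ``rank $\le 1$ deviation from $t\cdot\id$'' locus — literally coincides, as a subobject of $\oa$ inside $\Repq GL_N$, with the ideal used in the cited papers, and that our quantum moment map is normalized (up to the usual twist by the ribbon element / balancing) the same way as theirs. Because the three references use mutually different models of $\cO_q(G)$, $\cD_q(G)$, and the reflection equation algebra, a genuinely careful proof would fix one model, verify the dictionary once, and then cite compatibility isomorphisms between models; in practice the cleanest route is to observe that all parties characterize $\mathcal{I}_t$ by the same universal property (the minimal equivariant flat $q$-deformation of the mirabolic class, equivalently the annihilator of a distinguished Verma-type module), so that the ideals agree by uniqueness, and the moment-map normalization is pinned down by requiring that $\mu$ reduce to the classical group-valued moment map ``monodromy around the puncture'' in the semiclassical limit, which forces agreement up to an automorphism of $\oa$ that preserves $\mathcal{I}_t$. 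I would therefore devote the bulk of the written proof to establishing this uniqueness/normalization statement, after which invoking the three cited computations is immediate.
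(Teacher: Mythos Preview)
Your proposal is correct and matches the paper's approach: the theorem is not proved from scratch but is a compilation of the three cited results, and the paper's ``proof'' consists precisely of the translation dictionary you outline --- identifying $A_{(T^2)^\circ}\cong \cD_q(GL_N)$ and $A_{Ann}\cong \cO_q(G)$ via \cite{Ben-Zvi2015}, and then matching $\mathcal{I}_t$. The only minor difference is in how $\mathcal{I}_t$ is identified: the paper does this concretely via the quiver description (the extra $\overset{1}{\bullet}\rightarrow$ vertex in \cite{Jordan2014}, respectively the $\mathbb{P}^{N-1}$ factor in \cite{Varagnolo2010}, each produce exactly the mirabolic ideal), whereas you propose an abstract uniqueness argument; the paper's route is shorter and avoids the normalization worries you raise.
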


More precisely, we have slightly reformulated each result here, for a more uniform presentation.  Let us spell out the dictionary here, for the reader's convenience.  In \cite{Varagnolo2010}, an algebra of quantum differential operators on $GL_N\times \mathbb{P}^{N-1}$ is constructed, along with a quantum moment map.  In the notation of \cite{Jordan2014}, this same algebra is denoted $\cD_q(\Mat_{\mathbf{d}}(Q))$, for $(Q,\mathbf{d}) = \overset{1}{\bullet}\rightarrow\overset{N}{\bullet}\rightloop$, and is an important special case of the quiver construction.  We have isomorphisms of algebras,
$$\cO_q(G)\cong \cO_q(\overset{N}{\bullet}\rightloop) \cong A_{Ann},\qquad\cD_q(GL_N) \cong \cD_q(\overset{N}{\bullet}\rightloop) \cong A_{(T^2)^\circ},$$
where each the first isomorphisms is clear from inspection of the defining relation, and while the second are special cases of the main result of \cite{Ben-Zvi2015}.  The $\mathbb{P}^{N-1}$ factor in~\cite{Varagnolo2010}, and the extra vertex on the quiver in \cite{Jordan2014}, each give rise to the deformed ideal $\mathcal{I}_t$, and so in each of the three cases of the theorem, one obtains the same quantum Hamiltonian reduction algebra.

By Theorem~\ref{quantum-moment-map-theorem}, we may define a braided module category $\cM_t=A_t\modu_\cA$, where $A_t:=\cO_q(G)/\mathcal{I}_t$ comes equipped with a canonical quantum moment map, as a quotient of $\cO_q(G)$. Finally, the quantum Hamiltonian reduction computed in those papers is precisely that which appears in Theorem~\ref{known-results}.  We therefore obtain the following important corollary:

\begin{corollary}
Let $\cA=\Repq  GL_n$, $\cM=\cM_t$, $S=(T^2,x)$ the closed torus, with a single marked point $x$ colored by $\cM_t$.  Then we have an isomorphism,
$$\End(i_{x*}(A_t)) \cong \mathbb{SH}_{q,t}(GL_n),$$
of the endomorphism algebra of $i_{x*}(A_t)$ as an object of $\int_{(T^2,x)}(\cA,\cM_t)$, and spherical DAHA $\mathbb{SH}_{q,t}(GL_n)$.

Hence, we obtain a marked mapping class group-equivariant ``global sections" functor
$$\Gamma:  \int_{(T^2,x)}(\cA,\cM_t)\to \mathbb{SH}_{q,t}(GL_n)\modu,$$
$$\qquad M\quad \mapsto\quad \Hom(i_{x*}(A_t), M)$$
from factorization homology of the marked torus to spherical DAHA-modules.
\end{corollary}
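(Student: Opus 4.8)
The plan is to deduce the Corollary from the results cited just before it, treating it essentially as a specialization of the marked-surface theorem combined with Theorem~\ref{known-results}. First I would apply the marked-surface computation (the theorem following Theorem~\ref{quantum-moment-map-theorem}) to the specific surface $S=(T^2,x)$, with $\cA=\Repq GL_n$ and $\cM=\cM_t=A_t\modu_\cA$ where $A_t=\cO_q(G)/\cI_t$. Since $A_t$ is a quotient of $\cO_q(G)=\oa$, the regular module $A_t\in\cM_t$ is an $\cA$-progenerator by the last sentence of Theorem~\ref{quantum-moment-map-theorem}, and $\underline{\End}_\cA(A_t)\cong A_t$ as an algebra. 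Hence part (1) of the marked-surface theorem applies with $N=M=A_t$, giving a natural isomorphism
$$\End(i_{x*}(A_t)) \cong \cA_{(T^2)^\circ}\dS_{\mu}A_t = \Hom\left(\mathbf{1}_\cA,\; A_{(T^2)^\circ}\underset{\oa}{\ot} A_t\right),$$
the quantum Hamiltonian reduction of $A_{(T^2)^\circ}$ along the quantum moment map determined by $\mu:\oa\to A_t$.

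Next I would unwind this quantum Hamiltonian reduction into the form appearing in Theorem~\ref{known-results}. Because $A_t=\oa/\cI_t$ and $\mu:\oa\to A_t$ is the canonical quotient map, the relative tensor product $A_{(T^2)^\circ}\otimes_{\oa}A_t$ is identified with $A_{(T^2)^\circ}/\cI_t$ (more precisely with $A_{(T^2)^\circ}/(A_{(T^2)^\circ}\cdot\mu_q(\cI_t))$, using that $\mu_q:\oa\to A_{(T^2)^\circ}$ is the quantum moment map of item (2) in the list after Theorem~\ref{quantum-moment-map-theorem}). Taking $\Hom(\mathbf 1_\cA,-)$, i.e. the invariants functor, then yields exactly $(A_{(T^2)^\circ}/\cI_t)^{U_q\mathfrak{gl}_N}$. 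At this point Theorem~\ref{known-results} supplies the isomorphism of this algebra with $\mathbb{SH}_{q,t}(GL_n)$, and composing the two isomorphisms gives the first claim. I would remark that the resulting isomorphism is independent of which of the three parameter regimes of Theorem~\ref{known-results} one is in, since the reduced algebra is the same in each case.

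For the second part, I would invoke part (3) of the marked-surface theorem: the quantum global sections functor $\Gamma=\Hom(i_{x*}(N),-)$ lands in modules over $(\cA_{(T^2)^\circ}\dS_\mu N)^{op}$ and is equivariant for the marked-and-colored mapping class group of $(T^2,x)$. Substituting $N=A_t$ and transporting along the algebra isomorphism from part (1) identifies the target with $\mathbb{SH}_{q,t}(GL_n)\modu$ (one should either note that spherical DAHA carries an anti-involution identifying it with its opposite, or simply absorb the ${}^{op}$ into the statement), and the formula $M\mapsto\Hom(i_{x*}(A_t),M)$ is exactly $\Gamma$ in this case. Equivariance is then inherited verbatim from part (3).

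The only genuinely substantive step is the identification of the relative tensor product $A_{(T^2)^\circ}\otimes_{\oa}A_t$ with $A_{(T^2)^\circ}/\cI_t$ and the matching of the invariants functor $\Hom(\mathbf 1_\cA,-)$ with the $U_q\mathfrak{gl}_N$-invariants appearing in Theorem~\ref{known-results} and in \cite{Varagnolo2010,Jordan2014,Balagovic2016}; once the dictionary $\cO_q(G)\cong A_{Ann}$, $\cD_q(GL_N)\cong A_{(T^2)^\circ}$ spelled out after Theorem~\ref{known-results} is in place this is a bookkeeping matter, but it is the place where the three external results must be aligned with our conventions. Everything else is a direct specialization of the general marked-surface theorems, so the main obstacle is purely one of carefully matching notation across the cited papers rather than any new mathematical input.
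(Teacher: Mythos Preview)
Your proposal is correct and follows exactly the route the paper intends: the paper presents this Corollary in the introduction as an immediate consequence of combining the marked-surface endomorphism computation (Theorem~\ref{HamRedAlg}) with Theorem~\ref{known-results}, using the dictionary $\cO_q(G)\cong A_{Ann}$, $\cD_q(GL_N)\cong A_{(T^2)^\circ}$ spelled out just before the Corollary. Your write-up is in fact more explicit than the paper's own justification, which leaves the identification of the relative tensor product with the quotient and of $\Hom(\mathbf{1}_\cA,-)$ with $U_q\mathfrak{gl}_N$-invariants entirely to the reader.
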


More generally, the category $\int_{(T^2,x)}(\cA,\cM_t)$ provides a quantum version of the category of mirabolic $\cD$-modules studied in~\cite{Nevins2009,Finkelberg2010,Bellamy2015} and others, of which representations of spherical DAHA provide the ``principal series" part.

This result gives a topological explanation of the existence of a quantum Fourier transform on $\mathbb{SH}_{q,t}(GL_n)$ leading to an action of the marked torus mapping class group $\widetilde{SL_2(\ZZ)}$ by algebra automorphism.  It also justifies the moniker ``operator-valued Verlinde algebra", by which Cherednik first referred to his DAHA~\cite{Cherednik2005,Cherednik2004}: while the Verlinde algebra is attached to $T^2$ by the 3d Witten-Reshetikhin-Turaev theory, we see that the spherical double affine Hecke algebra is the affinization of the category attached to a marked $T^2$ by the 4D theory, so that it obtains all the topological symmetries of the torus from functoriality of the construction.


\subsection{Acknowledgments}
We would like to thank Pavel Etingof and Benjamin Enriquez for sharing their ideas concerning elliptic structures on categories.  We'd also like to thank John Francis, Greg Ginot, Owen Gwilliam, and Claudia Scheimbauer for discussions about factorization homology.

The work of DBZ and DJ was partly supported by NSF grant DMS-1103525 and ERC Starting Grant 637618. DBZ and DJ would like to acknowledge that part of the work was carried out at MSRI as part of the program on Geometric Representation Theory.

\section{Factorization homology of surfaces}
In this section we briefly review factorization homology of stratified $n$-manifolds, following \cite{Lurie,Francis2012, Ayala2012}.  We will put special emphasis on the case $n=2$, of surfaces -- possibly marked and/or with boundary -- and with values in certain 2-categories $\Rex$ or $\LFP$, of $\K$-linear categories (see \cite{Ben-Zvi2015} for a review of $\Rex/\LFP$ as settings for factorization homology):  in this case, many of the structures demanded by the general framework of factorization homology recover well-known structures in quantum algebra.

As in \emph{loc. cit.} our main example will be the balanced tensor category $\Repq G$: this notation means we choose a reductive algebraic group $G$, a Killing form $\kappa$ on $\mf g = Lie(G)$, and consider either the category of finite-dimensional $U_q(\mf g)$-modules,  when $G$ is simply connected, or the corresponding braided tensor subcategory when $G$ is not semisimple.  We do not recall a presentation of $U_q(\mf g)$ here, but rather refer to e.g.~\cite[Section~9.1]{Chari1994} for basic definitions.

\subsection{Factorization homology of surfaces}\label{sec:factHom}

Let $s$ stand in for either framing (fr) or orientation (or). We denote by $\Mfld^{2}_{s}$ the $(\infty,1)$-category, whose objects are (framed or oriented) 2-dimensional manifolds with corners and whose morphisms spaces are the $\infty$-groupoids of (framed or oriented) embeddings.  We denote by $\Disk^2_{s}$ the full subcategory whose objects are arbitrary (possibly empty) finite disjoint unions of $\mathbb{R}^2$.  Each category is naturally symmetric monoidal with respect to disjoint union.

\begin{definition}
	A $\operatorname{Disk}_s^2$-algebra in an $(\infty,1)$ symmetric monoidal category $\cC$, for $s\in \{fr,or\}$, is a symmetric monoidal functor from $\operatorname{Disk}^2_{s}$ to $\cC$. 
\end{definition}

\begin{remark}\label{rmk:MeqA} A $\operatorname{Disk}^2_{fr}$-algebra (or rather the image of $\RR^2$) is usually called an $E_2$-algebra, or algebra over the little disk operad. Similarly a $\operatorname{Disk}^2_{or}$-algebra is an algebra over the framed little disk operad.
\end{remark}

The data of $\cA$ is completely determined, in the framed case, by the image $\cA(\mathbb{R}^2)$ of the generator $\mathbb{R}^2$, and a collection of morphisms $\cA^{\bt k}\to\cA$ (including $k=0$, which gives the unit map), and a well-known host of coherences.  We abuse notation, and denote both the functor and its value on the generator $\mathbb{R}^2$ by the symbol $\cA$.  In the oriented case, we have also to specify the ``balancing" automorphism of the identity functor. This corresponds to the loop, in the space of oriented diffeomorphisms of a disc, which rotates $\theta$ degrees about the origin, for $\theta\in [0,2\pi]$.
\begin{remark}
	Since our target is a (2,1)-category, functors from $\Mfld^2_s$ factors through its ``homotopy (2,1)-category'', i.e. the category whose Hom spaces are fundamental groupoids of spaces of embeddings.
\end{remark}
Our main case of interest is when $\cC=\Rex$ or $\LFP$ is a certain symmetric monoidal $2$-category of $\K$-linear categories with the Kelly-Deligne tensor product (see Section \ref{reconstruction-section} below, and Section 3 of \cite{Ben-Zvi2015} for details).  In this case, the data of an $E_2$-algebra consists of a braided tensor category structure on $\cA(\mathbb{R}^2)$, which we denote simply as $\cA$, by abuse of notation.  The data of a $\Disk^2_{or}$-algebra is identified with a {\em balanced} braided tensor structure.  

The factorization homology $\int_S \cA$ of an $E_2$-algebra (resp, $\Disk^2_{or}$-algebra) $\cA$ on a framed (resp, oriented) surface $S$ is defined as a colimit,
$$\int_S \cA =\colim{(\mathbb{R}^2)^{\sqcup k}\to S} \cA^{\bt k},$$
over all framed (resp. oriented) embeddings of disjoint unions of discs into $S$, where the (1-, and 2-) morphisms in the diagram are comprised of the tensor functors $\cA^{\bt^k}\to\cA$, and their coherences (including the associativity and braiding isomorphisms).

In other words (in the case $\cC=\Rex/\LFP$), it is the universal recipient of functors from $\cA^{\bt k}$ labeled by collections of disjoint disks in $S$, and factoring through the $\Disk^2_{or}$-algebra structure on $\cA$, whenever a disk embedding factors through inclusions of disks in a larger disk.
Formally this colimit is expressed as a  left Kan extension,

$$\xymatrix{
\Disk^2_{s} \ar@{^{(}->}[dr] \ar[rr]^{\cA} && \cC\\
& \Mfld^2_{s}\ar@{-->}[ur]_{\scalebox{0.75}{$\displaystyle{\int_{-}\cA}$}}
}.$$

An important feature of factorization homology is that the empty set is regarded as a surface, and has an initial embedding to any surface.  This induces a map $\mathbf{1}_\cC\simeq \int_\emptyset \cA\to \int_S\cA$, for any $S$.  In the case $\cC=\Rex/\LFP$, we have $\mathbf{1}_\cC=\Vect$, so the initial functor is determined by the the image of $\K\in\Vect$.  This equips all braided tensor categories appearing in the theory with their unit object, and equips factorization homology of any surface $S$ with a distinguished object, which we called in \cite{Ben-Zvi2015} the ``quantum structure sheaf," $$\cO_{S,\cA}\in \int_S\cA.$$  It follows that we can also calculate $\cO_{\cA,S}$ as the image of the unit in $\cA$ under the map $\cA\to\int_S\cA$ associated to any disc embedding.

Factorization homology satisfies an important excision property.
Given a 1-manifold $P$, the factorization homology $\int_{P\times I} \cA$ on the cylinder (with any framing) carries a canonical $E_1$ (associative) algebra structure from the inclusion of disjoint unions of intervals inside a larger interval (i.e., we stack cylinders inside a larger cylinder, see Figure \ref{stacking-picture}). Moreover the invariant of a manifold with a collared boundary $M$ is naturally a module over $\int_{P\times I} \cA$. 

This structure allows us to describe the factorization homology of a glued (framed or oriented) surface $S=S_1\sqcup_{P\times I} S_2$ as a relative tensor product:

$$\int_{S}\!\!\cA = \int_{S_1}\!\!\cA \underset{\scalebox{0.75}{$\displaystyle{\int_{P\times I}\!\!\!\!\cA}$}}{\bt} \int_{S_2}\!\!\cA.$$

In the case of punctured surfaces studied in \cite{Ben-Zvi2015}, we exploited excision for $P=I$, an interval, to compute factorization homology categories as categories of modules for certain explicitly presentable ``moduli algebras".  In the present paper, we will extend these descriptions to closed surfaces, and this involves applying excision in the case $P=S^1$.  For that case, we need to develop the theory of surfaces with marked points, and an explicit description for the tensor structure on the annulus.  

\subsection{Marked points}
Following \cite{Ayala2012}, factorization homology for surfaces with marked points may be defined similarly as for unmarked surfaces, via a Kan extension from a category of marked discs to a category of marked surfaces.

We denote by $\Disk^2_{s,mkd}$ the $(\infty,1)$-category whose objects are disjoint unions of unmarked (framed or oriented) disks $\mathbb{R}^2$  and once-marked disks $\mathbb{R}^2_{\mathbf{0}}$, and whose morphism spaces are spaces of (framed or oriented) embeddings, which are moreover required to send marked points bijectively to marked points. 

The data of a symmetric monoidal functor $\Disk^2_{fr,mkd}\to \cC$ is equivalent to an $E_2$-algebra $\cA$ (the restriction to unmarked disc) and an object $\cM\in \cC$ assigned to a marked disc, equipped with the structure of {\em $E_2$-module} over $\cA$ (the intrinsic operadic notion of module for an $E_2$-algebra). That is, $\cM$ is equipped with a compatible collection of functors $\cM\bt\cA^{\bt k}\to\cM$  determined by choosing embeddings from the disjoint union of one marked and $k$ unmarked, disks back into the marked disk, satisfying a collection of coherences.  By analogy, we will call the image of the once marked disk through a functor $\Disk^2_{or,mkd}\to\cC$ a $\Disk^2_{or}$-module over the $\Disk^2_{or}$-algebra image of the unmarked disk.

\begin{remark} The requirement that marked points map bijectively means that the empty set is no longer initial in the category of marked surfaces.  Allowing maps which are only injective on marked points is equivalent to giving a pointing $1_\cC\to \cM$ with no additional coherences, and agrees with the notion of locally constant factorization algebra on the stratified space $\mathbb{R}^2_{\mathbf{0}}$.
\end{remark}

Similarly to ordinary factorization homology, a symmetric monoidal functor from $\Disk^2_{s,mkd}$ to some target category $\cC$ is determined by its values $\cA$, and $\cM$ on the unmarked, and once-marked discs, respectively, together with a host of functors and coherences between various tensor products of $\cA$ and $\cM$.  Let us therefore denote such a functor by the pair $(\cA,\cM)$.

\begin{definition}
The factorization homology of the pair $(\cA,\cM)$ is the left Kan extension, 
$$\xymatrix{
\Disk^2_{or,mkd} \ar@{^{(}->}[dr] \ar[rr]^{(\cA,\cM)} && \cC\\
& \Mfld^2_{or,mkd}\ar@{-->}[ur]_{\scalebox{0.75}{$\displaystyle{\int_{-}(\cA,\cM)}$}}
}.$$
\end{definition}

As for ordinary factorization homology of surfaces, the definition by left Kan extension implies a formula for the factorization homology of any marked surface as a colimit,
$$\int_{(S,X)}\!\!\!(\cA,\cM) = \colim{(\mathbb{R}^2)^{\sqcup k}\sqcup(\mathbb{R}^2_\mathbf{0})^{\sqcup l}\to (S,X)} \cA^{\bt k}\bt \cM^{\bt l},$$
over all embeddings of unmarked or once-marked discs into $S$.

Just as for ordinary factorization homology, we have an excision property for computing factorization homology of marked surfaces.  Let $$(S,X)= (S_1,X_1)\cup_{P\times I} (S_2,X_2)$$ be the relative union of marked surfaces $(S_1,X_1)$ and $(S_2,X_2)$, along some (unmarked) cylinder $P\times I$.  Then we have

$$\int_{(S,X)}\!\!\!\!\!\!\!(\cA,\cM) = \int_{M_1}\!\!\!(\cA,\cM) \underset{\scalebox{0.75}{$\displaystyle{\int_{P\times I}\!\!\!\!\!\!\cA}$}}{\bt} \int_{M_2}\!\!\!(\cA,\cM).$$

\subsection{Boundary conditions}\label{boundaries}
An important source of examples of markings of surfaces come from boundary conditions in the topological field theory defined by $\cA$, or concretely from factorization algebras on manifolds with boundary, extending $\cA$ from the interior. 

We denote by $\Disk^2_{fr/otd,bdry}$ the  $(\infty,1)$-category whose objects are disjoint unions of unmarked (framed or oriented) disks $\mathbb{R}^2$  and half-spaces $\mathbb{H}=\mathbb{R}\times\mathbb{R}_{\geq 0}$, and whose morphism spaces are spaces of (framed or oriented) embeddings, which are moreover required to respect boundaries. 
The data of a symmetric monoidal functor $\Disk^2_{fr,bdry}\to \cC$, i.e., of a factorization algebra on the stratified space $\mathbb{H}$, is equivalent to an $E_2$-algebra $\cA$ (the restriction to unmarked disc) and an object $\cB\in \cC$ assigned to a half-space, equipped with the structure of {\em $\cA$-algebra}: an algebra object in $\cC$ equipped with an $E_2$-morphism $z:\cA\to \cZ(\cB)$ from $\cA$ to the center $$\cZ(\cB)=End_{\cB\bt \cB^{op}}(\cB)$$of $\cB$ (i.e. the pair $(\cA,\cB)$ is an algebra over Voronov's Swiss--cheese operad~\cite{Voronov1999}).  Here the algebra structure on $\cB$ comes from the inclusion of unions of half spaces into half-spaces -- i.e., $\cB$ itself defines a one-dimensional factorization algebra valued in $\cC$. The central action comes from the inclusion of a disc into the half space.

In the case $\cC=\Rex$, this means we have a braided tensor category $\cA$, a tensor category $\cB$, and a functor of braided tensor categories from $\cA$ to the Drinfeld center of $\cB$.

Two rich sources of $\cA$-algebras, hence boundary conditions, for $\cA$ a braided tensor category are:
\begin{enumerate}
\item Categories of modules $\cB=B\modu$ for commutative (i.e., braided or $E_2$) algebra objects $B\in \cA$. 
\item The category $\cB=\Repq  B$, of torus-integrable representations of the quantum Borel subalgebra $U_q(\mathfrak{b}^+)\subset U_q(\mathfrak{g})$ form a $\Repq  G$-algebra.  This follows simply from the fact that
$$\mathcal{R} \in U_q(\mathfrak{b}^+) \ot U_q(\mathfrak{b}^-) \subset U_q(\mathfrak{g}) \ot U_q(\mathfrak{g}),$$
where $\mathcal{R}$ is the quantum $R$-matrix which controls the braiding on $\Repq G$.  Hence $\mathcal{R}$ itself endows the forgetful functor $\Repq  G\to \Repq B$ with the structure of a central functor.  More generally quantum parabolic subgroups define boundary conditions.
\end{enumerate}

Given a surface with boundary $(S,\partial S)$ we can perform factorization homology $$\int_{(S,\partial S)} (\cA,\cB)$$ for the pair $(\cA,\cB)$ following the same formalism as in the unmarked and marked cases. 

An $\cA$-algebra $\cB$ is itself a one-dimensional factorization algebra and so can be integrated on closed one-manifolds
$P\mapsto \int_P\cB$. In particular we have the trace / cocenter / Hochschild homology category $$\tr(\cB)= \int_{S^1}\cB\simeq\int_{(S^1\times {\mathbb R}_{\geq 0},S^1)} (\cA,\cB).$$
We thus see that $\tr(\cB)$ is naturally an $\int_{S^1\times \RR}\cA$-module. When $\cA$ is balanced, this is identified with the annulus category $U(\cA)$, and thus we find
\begin{proposition}
Let $\cA$ denote a ribbon category and $\cB$ an $\cA$-algebra.
\begin{enumerate}
	\item The trace $\tr(\cB)$ carries a natural structure of $\Disk^2_{or}$ $\cA$-module. 

	\item Let $(S,\partial S)$ denote a compact surface with boundary $\partial S$ identified with $\coprod_n (S^1)$, and $(\overline{S},\{x_i\})$ the closed surface with marked points obtaining by sewing in discs along $\partial S$. Then the factorization homology of $\cA$ on $S$ with boundary marked by $\cB$ agrees with that of $\cA$ on $\overline{S}$ with points marked by $\tr(\cB)$:
		$$\int_{(S,\partial S)}(\cA,\cB)\simeq \int_{\overline{S},\{x_i\}} (\cA,\tr(\cB)).$$
\end{enumerate}
\end{proposition}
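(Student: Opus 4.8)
The plan is to deduce both assertions from the excision axiom together with the two dimensional factorization homology of the half-disk category $\cB$, which is the statement $\tr(\cB) = \int_{S^1} \cB \simeq \int_{(S^1\times\RR_{\geq 0},\,S^1)}(\cA,\cB)$ appearing just before the Proposition. For part (1), I would argue that $\cB$, being an $\cA$-algebra, defines a factorization algebra on the stratified half-plane $\mathbb{H}$, and hence can be integrated over any $1$-manifold with a collared boundary sitting inside $\mathbb{H}$; concretely, $\tr(\cB)=\int_{S^1}\cB$ is the integral over the circle obtained as the boundary of the annular neighborhood $S^1\times\RR_{\geq 0}\subset\mathbb{H}$. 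The inclusion of the half-open cylinder $S^1\times\RR$ (the ``collar'', sitting in the interior) into $S^1\times\RR_{\geq 0}$ exhibits $\int_{S^1\times\RR}\cA$ acting on $\tr(\cB)$ by the stacking/concatenation structure, exactly as in the excision discussion for cylinders. Since $\cA$ is ribbon (balanced), Remark~\ref{different framings} identifies $\int_{S^1\times\RR}\cA$ with the annulus integral $U(\cA)=\int_{Ann}\cA$ as a monoidal category, so $\tr(\cB)$ acquires a module structure over $U(\cA)$; by the operadic equivalence recalled in the introduction (a $U(\cA)$-module is the same as an $E_2$-module over $\cA$, i.e.\ by Theorem~\ref{thm:e2mod} a braided $\cA$-module, upgraded to a balanced one when $\cA$ is balanced via Theorem~\ref{thm:balancedModule}), this is precisely the claimed braided $\cA$-module structure on $\tr(\cB)$.

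For part (2), I would set up the comparison surface-by-surface using excision. Decompose the compact surface with boundary as $S = S^{\mathrm{int}} \sqcup_{\coprod_n (S^1\times I)} \big(\coprod_n (S^1\times\RR_{\geq 0})\big)$, i.e.\ cut off a collar neighborhood of each of the $n$ boundary circles; here $S^{\mathrm{int}}$ is the interior piece, which is an open surface with $n$ cylindrical ends and no boundary stratum, diffeomorphic to $\overline{S}$ minus $n$ open disks around the marked points $x_i$. Excision for marked/boundary factorization homology along the $n$ copies of $S^1\times I$ gives
$$\int_{(S,\partial S)}(\cA,\cB) \;\simeq\; \Big(\int_{S^{\mathrm{int}}}\cA\Big) \underset{\big(\int_{S^1\times I}\cA\big)^{\boxtimes n}}{\boxtimes} \Big(\int_{(S^1\times\RR_{\geq 0},\,S^1)}(\cA,\cB)\Big)^{\boxtimes n},$$
and by the half-plane computation this last factor is $\tr(\cB)^{\boxtimes n}$. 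On the other side, decomposing $\overline{S}$ as $\overline{S} = S^{\mathrm{int}} \sqcup_{\coprod_n(S^1\times I)} \big(\coprod_n D^2_{\mathbf 0}\big)$, i.e.\ gluing back in a once-marked disk $D^2_{\mathbf 0}$ (colored by $\tr(\cB)$) for each $x_i$, excision for marked surfaces gives
$$\int_{(\overline{S},\{x_i\})}(\cA,\tr(\cB)) \;\simeq\; \Big(\int_{S^{\mathrm{int}}}\cA\Big) \underset{\big(\int_{S^1\times I}\cA\big)^{\boxtimes n}}{\boxtimes} \big(\tr(\cB)\big)^{\boxtimes n}.$$
The two right-hand sides are manifestly identified, as long as the $\int_{S^1\times I}\cA$-module structure on $\int_{(S^1\times\RR_{\geq 0},S^1)}(\cA,\cB)=\tr(\cB)$ coming from the collar of the half-plane agrees with the one on $\tr(\cB)$ as the marked disk $D^2_{\mathbf 0}$, which is the content of part (1) together with the identification of the half-open cylinder's action with the annulus action; tracking these module structures through $\simeq$ completes the argument.

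The main obstacle I anticipate is precisely this compatibility of module structures: one must check that the $U(\cA)$-action on $\tr(\cB)$ produced by viewing $\cB$ as a one-dimensional factorization algebra living in a collar of $\mathbb{H}$ is the \emph{same} (as an $E_2$-/braided $\cA$-module, not merely abstractly equivalent) as the $E_2$-module structure under which $\tr(\cB)$ is attached to a marked disk. This is a coherence statement about the embeddings realizing both actions — one stacks annuli into the collar $S^1\times\RR_{\geq 0}$ from one side in the boundary picture, and from both sides around the marked point in the marked picture, and these must be matched via the diffeomorphism $S^1\times\RR_{\geq 0}\setminus(S^1\times\{0\}) \cong Ann$ — and it is really the place where the ribbon (balanced) hypothesis on $\cA$ is used, since it is what makes the boundary framing and the marked framing agree (cf.\ Remark~\ref{different framings}). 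Everything else — the two excision computations and the diffeomorphisms of surfaces — is formal.
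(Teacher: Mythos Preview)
Your proposal is correct and matches the paper's approach. The paper does not give a detailed proof of this proposition: it records the key identifications $\tr(\cB)=\int_{S^1}\cB\simeq\int_{(S^1\times\RR_{\geq 0},S^1)}(\cA,\cB)$, observes that this is naturally an $\int_{S^1\times\RR}\cA$-module, and invokes the balanced hypothesis to identify the latter with $U(\cA)$, exactly as you do for part (1); your excision argument for part (2) is the natural elaboration the paper leaves implicit, and your discussion of the compatibility of module structures correctly isolates where the ribbon hypothesis enters.
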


The proposition (in the case of representations of a quantum Borel or parabolic subgroup) allows us in particular to define parabolic versions of quantum character varieties, quantizing moduli of parabolic local systems.

\section{Braided module categories and surfaces with marked points}\label{sec:BraidedModule}

In this section, we identify the possible markings of points in factorization homology of marked surfaces in terms of explicit algebraic data called braided module categories.  To begin, we recall that the factorization homology of the annulus inherits a natural tensor structure, coming from ``stacking annuli":

\begin{figure}[h]
\includegraphics[width=4in]{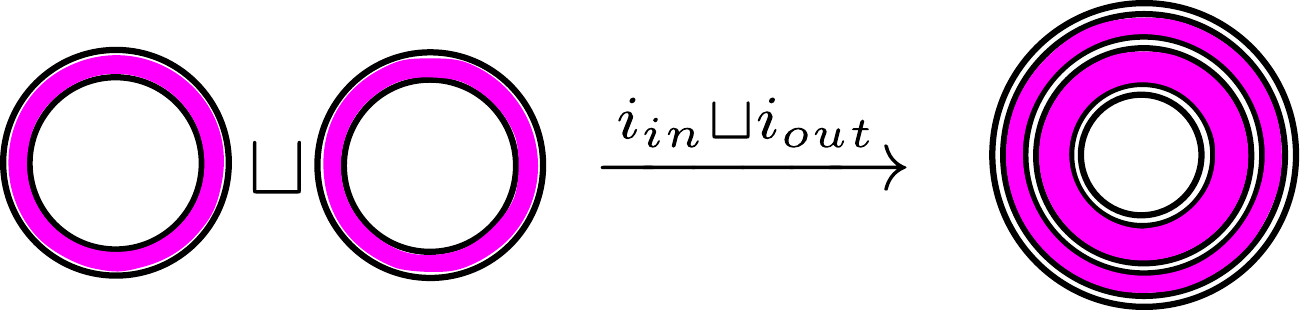}
\caption{The inclusion of two annuli into a third induces the stacking tensor structure on $\int_{Ann}\cA$ by functoriality.}\label{stacking-picture}
\end{figure}

\begin{definition} The \emph{stacking tensor product} on $\int_{Ann}\cA$, denoted $M,N\leadsto M\odot N$, is:
$$T_{St}:= (i_{in} \sqcup i_{out})_*: \int_{Ann}\!\!\!\!\!\cA\,\,\, \bt\,\,\, \int_{Ann}\!\!\!\!\!\cA \longrightarrow \int_{Ann}\!\!\!\!\!\cA,$$
where $i_{in}$ and $i_{out}$ are as depicted below.
\end{definition}

\begin{remark}
This same category carries a second tensor product, the \emph{pants}, or \emph{convolution} tensor product. Let $Pants$ denote a twice punctured disc, let $i_1$ and $i_2$ denote the inclusion of an annular boundary around the two punctures, and let $i_{out}$ denote the annular boundary around the outside of the disc.  Then we define:
$$T_{Pants}:= i_{out}^* \circ ( i_1\sqcup i_2)_*: \int_{Ann}\!\!\!\!\!\cA \,\,\,\bt\,\,\, \int_{Ann}\!\!\!\!\!\cA \longrightarrow \int_{Pants}\!\!\!\!\!\!\!\cA \longrightarrow \int_{Ann}\!\!\!\!\!\cA.$$

One can show following~\cite{Bruguieres2008} that this coincides with the monoidal structure of the Drinfeld center of $\cA$. An illustrative example for comparing these two structures is the case $\cA=\Rep G$.  Then we have
$$\int_{Ann}\!\!\!\!\Rep G \simeq \QCoh(\frac{G}{G}),$$ and the stacking tensor product is the pointwise tensor product of quasi-coherent sheaves on the stack $\frac{G}{G}$, while the pants tensor product is the convolution along the diagram,
$$\frac{G}{G}\times\frac{G}{G} \xleftarrow{quot.} \frac{G\times G}{G} \xrightarrow{mult.} \frac{G}{G}.$$
The convolution tensor structure does not play a role in this paper, as it is the stacking tensor product which features in the excision axiom.
\end{remark}

There is a natural operadic notion of module for an $E_n$-algebra $\cA$, the notion of $E_n$-module (which for $n=1$ recovers the notion of {\em bimodule} rather than module over an associative algebra), see~\cite{Lurie2009,Francis2013} and~\cite{Ginot2014} for a review. Intuitively an $E_n$-module is an object placed at the origin in $\RR^n$ and admitting operations from insertions of $\cA$ at disjoint points (or disks). This notion appears naturally in factorization homology, as the data required to extend the theory to manifolds with marked points. There's a monadic description of $E_n$-modules over $\cA$ as left modules over an $E_1$-algebra $U(\cA)$, the enveloping algebra of $\cA$, which is in turn readily identified with the factorization homology $\int_{Ann}\cA$ of $\cA$ on the complement of the origin. We take this latter notion as our definition and refer to~\cite{Francis2013,Ginot2014} for comparisons with the operadic notion. 
We will then give it a more algebraic description by reconciling it with the notion of a ``braided module category"~\cite{Brochier2013,Enriquez2008}, in the case $\cC=\Rex$.

\begin{definition-proposition}[\cite{Ginot2014}]
An $E_2$-module for an $E_2$-algebra $\cA$ is a right module over the annulus category $\int_{Ann} \cA$, with tensor structure $T_{St}$.
\end{definition-proposition}

In our setting, $E_2$-algebras are identified with the notion of braided tensor categories; in the same spirit, we first show that the notion of $E_2$-modules over braided tensor categories coincides with the notion of braided module categories defined as follows:
\begin{definition}[\cite{Brochier2013}]\label{def:BraidedModule}
	Let $\cA$ be a braided tensor category, with braiding $\sigma$. A (strict\footnote{This assumption is made only to simplify the exposition: the non-strict axioms simply involve inserting associators in the obvious places in equations \eqref{eq:octagon} and \eqref{eq:DMcat}.}) \emph{braided module category} over $\cA$ is a (strict) right $\cA$-module category $\cM$ equipped with a natural automorphism, $E$, of the action bifunctor
	\[
\ot : \cM \times \cA \longrightarrow \cM
	\]
	satisfying the following axioms for all $M \in \cM$, $X,Y \in \cA$:
	\begin{equation}\label{eq:octagon}
		E_{M\ot X,Y}=\sigma_{X,Y}^{-1}E_{M,Y}\sigma_{Y,X}^{-1}
	\end{equation}
	and
	\begin{equation}\label{eq:DMcat}
		E_{M,X\ot Y}=E_{M,X}E_{M\ot X,Y}\sigma_{Y,X}\sigma_{X,Y}.
	\end{equation}
\end{definition}

\begin{remark}
	In the presence of~\eqref{eq:octagon}, equation~\eqref{eq:DMcat} is equivalent to the following version of the Donin--Kulish--Mudrov equation~\cite{Donin2003a}
	\begin{equation}\label{eq:DM}
		E_{M,X\ot Y}=E_{M,X}\sigma_{X,Y}^{-1}E_{M,Y}\sigma_{X,Y}.
	\end{equation}

This in turn implies the reflection equation, and hence that braided module categories give rise to representations of the braid group of the annulus. In fact equation~\eqref{eq:octagon} alone implies the reflection equation, but both are needed to describe the full $E_2$-module structure.
\end{remark}

\begin{remark}\label{different framings}
Axioms \eqref{eq:octagon}, \eqref{eq:DMcat} differ slightly from those in~\cite{Brochier2013}:  the latter characterizes modules over the factorization homology of $S^1\times \RR$ equipped with the \emph{product/cylinder} framing, while $E_2$-modules are rather characterized as modules over the factorization homology of $S^1\times \RR$ equipped with the \emph{blackboard} (annulus) framing.  As there are an integer's worth of framings on $S^1\times \RR$, hence there 
are an integer's worth of alternative notions of braided module category.

When $\cA$ is balanced, however the corresponding axioms are equivalent:  indeed, if $\theta$ is the balancing and $E$ satisfies the axioms stated above, then a straightforward computation shows that $E^{-1}(\id \ot \theta^{-1})$ satisfies the axioms of~\cite{Brochier2013}.  We note that a similar phenomenon has appeared in the definition of the elliptic double of~\cite{Brochier2014}, where there are many possible definitions, which coincide in the case of a ribbon Hopf algebra.  Here, in order to simplify the presentation, we stick to the balanced assumption and do not differentiate between these different framings.
\end{remark}

In order to identify the different notions of module category we will use a different description of the annulus than that which features in~\cite{Ben-Zvi2015}. Rather than cutting it into two half-annuli, as we did there, we will make a single vertical cut at the top of the annulus and see the annulus as being obtained by self-gluing along this cut (see Figure~\ref{fig:annulus-cut}).
\begin{figure}[h]\label{fig:annulus-cut}
\includegraphics[width=5in]{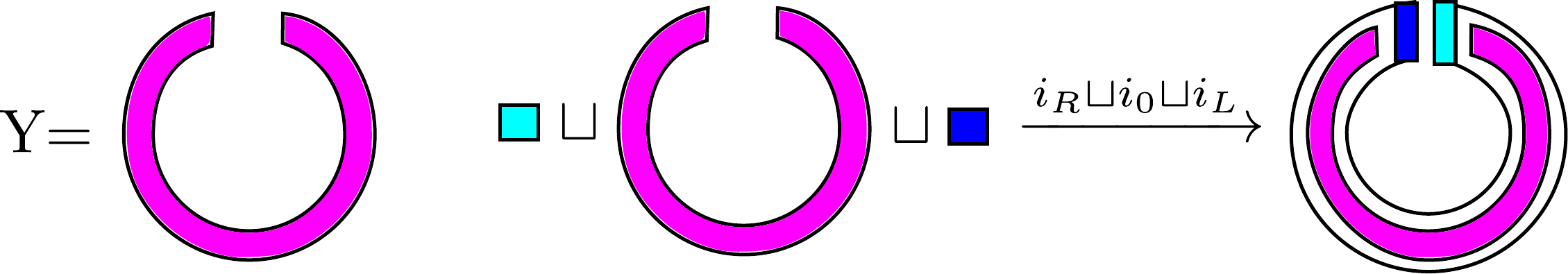}
\caption{Left:  the stratified manifold $Y$.  Right: the inclusions of discs inducing the $\cA$-bimodule structure on $\int_Y\cA$.  The annulus is obtained by gluing along the boundary intervals.} \label{fig:annulus-action}
\end{figure}

Hence let $Y$ be the manifold represented in Figure~\ref{fig:annulus-cut}.
We stress the fact that for the gluing to make sense we need to regard it as a stratified manifold, and as such it is not equivalent to the standard framed disc. Yet, its interior $\mathring{Y}$, i.e. the manifold obtained from $Y$ by forgetting the stratification, is equivalent as a framed manifold to the standard disc, hence we have an equivalence of underlying categories,
\[
	\int_Y \cA \simeq \int_{\mathring{Y}} \cA \simeq \cA
\]
but the $\cA$-actions are different from the standard one. The $\cA$-bimodule action on $\int_{Y}\cA$ is induced by the embedding $i_R\sqcup i_0 \sqcup i_L: \RR^2 \sqcup \mathring{Y} \sqcup \RR^2 \rightarrow Y$ depicted in the left hand side of Figure~\ref{fig:annulus-action}, i.e. it is the functor,
$$(i_R\sqcup i_0 \sqcup i_L)_*: \cA\bt\left(\int_Y\cA\right)\bt\cA \to \int_Y\cA,$$
\begin{figure}[h]\label{fig:twistedModule}
\includegraphics[width=5in]{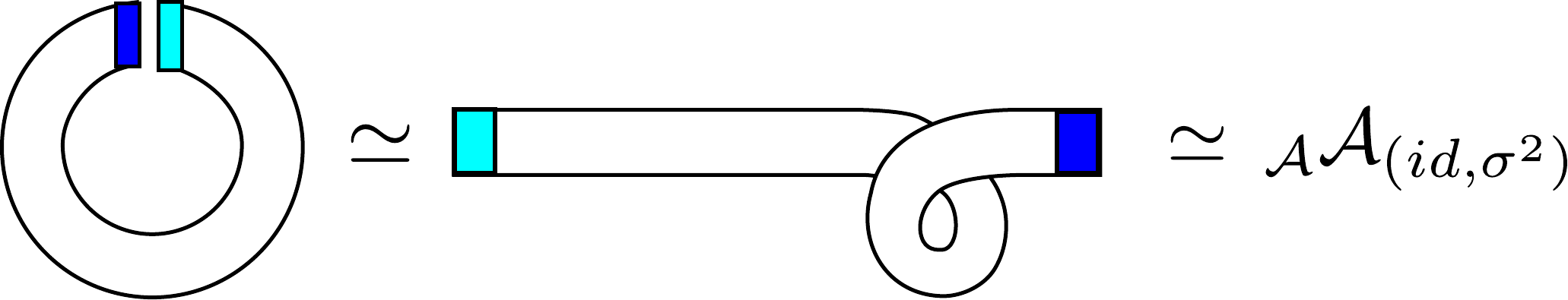}
\caption{The $\cA$-bimodule $\int_Y\cA$ is obtained from the regular bimodule by precomposing the right action by the tensor functor $(id,\sigma^2):\cA\rightarrow \cA$.}
\end{figure}

The double braiding $\sigma^2$ induces a non-trivial tensor structure on the identity functor of $\cA$.  We denote by $_\cA\cA_{(id,\sigma^2)}$ the twist of the regular bimodule category by this auto-equivalence, i.e. the $\cA$-bimodule category whose underlying left module is the regular left $\cA$-module, but where the right action (in particular, its associativity constraint) is precomposed with $(id,\sigma^2)$.   In Figure~\ref{fig:twistedModule}, we demonstrate an equivalence between $\int_Y\cA$ and this twisted bimodule.

To recover the factorization homology of the annulus from the bimodule $\int_Y\cA$, we need to recall the notion of balanced functors of bimodules, and the resulting notion of the trace of a bimodule.  To this end, let $\cM$ be an $\cA-\cA$-bimodule category. Then a functor $F:\cM\rightarrow \cE$ is called balanced if there is a natural isomorphism
\[
	F(m\ot X)\rightarrow F(X \ot m)
\]
satisfying a natural coherence condition (see e.g.\cite{Fuchs2014}).

\begin{definition}\label{defi:trace}
	The \emph{trace} $\tr\cM$, of an an $\cA$-$\cA$-bimodule category $\cM$, is the $\Rex_{\K}$-category defined uniquely by the natural equivalence,
$$\Rex_{\K}[\tr\cM, \cE] \simeq \operatorname{Bal}_\cA(\cM,\cE).$$
\end{definition}
\begin{remark}
If $\cM,\cN$ are right and left $\cA$-module categories, then $\cM \boxtimes \cN$ is an $\cA$-bimodule and clearly 
\[
	\tr (\cM \boxtimes \cN)\simeq \cM \boxtimes_{\cA} \cN.
\]
On the other hand, an $\cA$-bimodule is the same as an $\cA \boxtimes \cA^{rev}$-module and one can show that
\[
	\tr \cM \simeq \cA \boxtimes_{\cA\boxtimes \cA^{rev}} \cM
\]
where $\cA$ is given its natural $\cA$-bimodule structure (i.e., we recover the standard notion of Hochschild homology of a bimodule).
\end{remark}
We have:
\begin{lemma}
The category $\int_{Ann}\cA$ is the trace in the sense of Definition~\ref{defi:trace} of the bimodule $_{\cA}\cA_{(id,\sigma^2)}$. 
\end{lemma}
\begin{proof}
	It is clear that constructing the annulus by self-gluing from the manifold showed on Figure~\ref{fig:annulus-cut} is the same as gluing both ends to a disk with two marked intervals and its standard framing, which corresponds to the regular bimodule $\cA$. Hence by the excision property we have:
	\[
		\int_{Ann}\cA\simeq _{\cA}\cA_{(id,\sigma^2)}\bt_{\cA \bt \cA^{rev}} \cA\simeq \tr \left(_{\cA}\cA_{(id,\sigma^2)}\right).
		\]
\end{proof}
We can now prove:
\begin{proposition}\label{prop:AnnulusAction}
	Let $\cA$ be a braided tensor category, $\cB$ a tensor category. Then tensor functors $\int_{Ann} \cA\rightarrow \cB$ are naturally identified with pairs $(F,\nu)$ where $F$ is a tensor functor from $\cA$ to $\cB$, and $\nu$ is a natural automorphism of $F$ satisfying\footnote{Here and in the proof we abuse notation and write $\sigma_{X,Y}$ instead of $F(\sigma_{X,Y})$.}:
	\begin{equation}\label{eq:AnnAction1}
		\id_X \ot \nu_{Y}=\sigma_{X,Y}^{-1} (\nu_{Y}\ot \id_X)\sigma_{Y,X}^{-1}
	\end{equation}
	and
	\begin{equation}\label{eq:AnnAction2}
		\nu_{X\ot Y}=(\nu_X \ot \nu_Y)\sigma_{Y,X}\sigma_{X,Y}
	\end{equation}
\end{proposition}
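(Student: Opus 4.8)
The plan is to use the description of the annulus category as the trace of the bimodule $_{\cA}\cA_{(\id,\sigma^2)}$, which was established just above the statement via excision. By Definition~\ref{defi:trace}, a $\Rex_{\K}$-functor $\int_{Ann}\cA \to \cB$ is the same data as a balanced functor $_{\cA}\cA_{(\id,\sigma^2)} \to \cB$; and a balanced functor out of a bimodule that happens to be a twist of the regular bimodule is easy to unwind. First I would recall that a $\K$-linear functor $F_0: {}_{\cA}\cA_{(\id,\sigma^2)} \to \cB$ out of the underlying category $\cA$ is determined by the object $F_0(\un_\cA) =: F(\un_\cA)$ together with extra structure; more usefully, since we are asking for \emph{tensor} functors $\int_{Ann}\cA\to\cB$ (not merely linear ones), I would track through how the stacking tensor structure $T_{St}$ on $\int_{Ann}\cA$ is reflected on the trace side, so that a monoidal functor out of the trace corresponds to a tensor functor $F:\cA\to\cB$ together with the balancing datum.

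Concretely, the key steps are: (i) Identify the datum of a balanced functor $_{\cA}\cA_{(\id,\sigma^2)}\to\cB$, \emph{compatible with the monoidal structures}, with a tensor functor $F:\cA\to\cB$ (this is the ``underlying functor'', obtained by restricting along the inclusion of a disc into $Y$, equivalently precomposing with $\cA\simeq\int_{\mathring Y}\cA$) plus the balancing natural isomorphism $\beta: F(m\ot X) \xrightarrow{\sim} F(X\ot m)$. (ii) Because the right $\cA$-action on $_{\cA}\cA_{(\id,\sigma^2)}$ is the regular one precomposed with the double-braiding auto-equivalence $(\id,\sigma^2)$, the balancing isomorphism, evaluated at the unit object $m=\un$, becomes a natural automorphism $\nu$ of $F$: set $\nu_X := \beta_{\un, X}$, using $\un\ot X = X = X\ot \un$ in $\cA$ but remembering that the right action was twisted. (iii) Translate the coherence axiom for a balanced functor (the hexagon-type condition referenced as in~\cite{Fuchs2014}), applied in the presence of this $\sigma^2$-twist, into the two displayed equations \eqref{eq:AnnAction1} and \eqref{eq:AnnAction2}. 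The twist is exactly what produces the $\sigma_{Y,X}\sigma_{X,Y}$ factors and the conjugation by $\sigma$'s: running the balanced-functor coherence once where the right action is the honest one gives $\nu_{X\ot Y}$ in terms of $\nu_X$ and $\nu_Y$ with no braiding correction, and reinstating the $\sigma^2$-twist on each right-multiplication inserts precisely the double braidings seen in \eqref{eq:AnnAction2}; similarly the mixed associativity/compatibility of the balancing with left versus right multiplication, under the twist, yields \eqref{eq:AnnAction1}. (iv) Check the correspondence is an equivalence of categories (or at least a bijection on objects and morphisms at the level we need): given $(F,\nu)$ satisfying \eqref{eq:AnnAction1}--\eqref{eq:AnnAction2}, reconstruct the balanced functor and verify its coherence, and check the two constructions are mutually inverse; naturality of $\nu$ and the tensor-functor axioms on $F$ ensure the reconstructed balanced functor is well defined and monoidal.

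I expect the main obstacle to be step (iii): carefully matching the single coherence diagram of a balanced functor against \emph{two} separate equations, and getting all the braidings, their inverses, and their placements correct once the $\sigma^2$-twist is threaded through the right action. The subtlety is bookkeeping about which copy of $\cA$ acts on which side and how the twist auto-equivalence $(\id,\sigma^2)$ — whose tensor structure on the identity functor of $\cA$ is the double braiding — modifies the associativity constraint of the right action; this is where \eqref{eq:AnnAction1} (the ``mixed'' relation, reflecting that $\nu$ intertwines left and twisted-right actions) and \eqref{eq:AnnAction2} (the ``multiplicativity'' relation, reflecting compatibility with tensoring two objects from the right) come apart. A clean way to organize this, which I would adopt, is to draw the string diagram in $Y$ for the relevant disc embeddings (as in Figures~\ref{fig:annulus-action} and the twisted-module figure) and read off the relations topologically: \eqref{eq:AnnAction1} is the picture where an $\cA$-disc slides past the gluing cut once, picking up $\sigma^{-2}$, and \eqref{eq:AnnAction2} is the picture where two $\cA$-discs are inserted near the cut on the same side and then merged, forcing a double braiding between them. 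Everything else — the equivalence being essentially surjective and fully faithful, naturality, monoidality — is then routine once the dictionary of step (iii) is pinned down.
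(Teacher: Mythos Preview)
Your approach is essentially the paper's: both use the identification of $\int_{Ann}\cA$ with the trace of the twisted bimodule $_{\cA}\cA_{(\id,\sigma^2)}$, unpack a monoidal functor out of the trace as a tensor functor $F:\cA\to\cB$ together with a balancing isomorphism $\eta_{X,Y}:F(X\ot Y)\to F(Y\ot X)$, and then set $\nu_X:=\eta_{\un,X}$.

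One organizational point worth noting: in your step (iii) you attribute \eqref{eq:AnnAction2} to the balanced-functor coherence and \eqref{eq:AnnAction1} to a separate ``mixed'' compatibility, but in the paper both equations are extracted from a single source, namely the \emph{monoidality} of $\eta$.  Concretely, the paper writes down the equation expressing that $\eta$ is a monoidal transformation,
\[
\eta_{X\ot W,\,Y\ot Z}\,\sigma_{Y,W}=\sigma_{Z,Y}\sigma_{Y,Z}\sigma_{X,Z}\,\eta_{X,Y}\,\eta_{W,Z},
\]
and then specializes: $X=Z=\un$ gives $\eta_{W,Y}=\nu_Y\sigma_{Y,W}^{-1}$, $W=Y=\un$ gives $\eta_{X,Z}=\sigma_{X,Z}\nu_Z$, and comparing these yields \eqref{eq:AnnAction1}; finally $X=W=\un$ gives \eqref{eq:AnnAction2}.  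The balanced-functor hexagon alone does not let you express the two-variable $\eta_{X,Y}$ in terms of the one-variable $\nu$, so your informal account of where \eqref{eq:AnnAction2} comes from would need to be sharpened in exactly this way.  Your string-diagram heuristic is a fine guide, but when you carry it out you will find yourself encoding precisely this monoidality constraint.
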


\begin{proof}
Denote by $\rtr$ and $\ltr$ the left and right $\cA$-actions on itself described above.	The description of $\int_{Ann}\cA$ as a coequalizer of those actions implies that functors out of it to a target category $\cB$ are naturally identified with functors $F:\cA\rightarrow \cB$ equipped with a natural isomorphism
	\[
		\eta_{X,Y}:F(X \ot Y)=F(X \ltr Y) \rightarrow F(Y\ot X)=F(Y\rtr X)
	\]
	satisfying the following coherence condition
	\[
		\sigma_{Z,Y}\sigma_{Y,Z}\eta_{X,Y\ot Z}=\eta_{Z\ot X,Y}\eta_{X\ot Y,Z}
	\]
	
	In order to characterize the monoidal structure on $\int_{Ann}\cA$, we need to upgrade this coequalizer as the coequalizer of a diagram in tensor categories. Recall that any tensor functor $F:\cA \rightarrow \cB$ turns $\cB$ into a left $\cA$-module using the following composition
	\[
		\cA \boxtimes \cB \xrightarrow{F\boxtimes \id} \cB \boxtimes \cB\xrightarrow{m} \cB
	\]
	where $m$ is the multiplication of $\cB$ and the associativity constraint of the module structure is given by the monoidal structure on $F$. This also turn $\cB$ into a right module by using the opposite multiplication instead.
	We note that both $\cA$ actions at hand are of this form: the right action is just induced by the identity functor and the left action by the identity functor with monoidal structure given by the double braiding. Since the multiplication of $\cA$ also carries a natural monoidal structure, this turns the maps involved in the defining diagram of $\int_{Ann}\cA$ into tensor functors.
	
	Now, (strict) monoidal functors out of $\int_{Ann}\cA$ to a monoidal category $\cB$ can be characterized as strict monoidal functors $F:\cA\rightarrow \cB$ equipped with a cyclic structure $\eta$ as above for which $\eta$ is monoidal. This leads to the following identity:
\[
	\eta_{X\ot W, Y\ot Z}\sigma_{Y,W}=\sigma_{Z,Y}\sigma_{Y,Z}\sigma_{X,Z}\eta_{X,Y}\eta_{W,Z}
\]
The coherence condition implies that $\eta_{X,1}=\id_X$. Hence define a natural automorphism of $F$ by $\nu_X:=\eta_{1,X}$. Setting $X=Z=1$ gives
\[
	\eta_{W,Y}\sigma_{Y,W}=\nu_Y
\]
so that $\eta$ can be recovered from $\nu$. For $W=Y=1$ this gives
\[
	\eta_{X,Z}=\sigma_{X,Z} \nu_Z.
\]
Together those equations implies~\eqref{eq:AnnAction1}.
Finally, setting $X=W=1$ leads to
\[
	\nu_{X\ot Z}=\sigma_{Z,Y}\sigma_{Y,Z}\nu_X\nu_Z.
\]
which up to relabelling is exactly~\eqref{eq:AnnAction2}.
\end{proof}

Hence, we have:

\begin{theorem}\label{thm:e2mod}
Let $\cA$ be a braided tensor category. Then $E_2$-modules over $\cA$ are naturally identified with $\cA$-braided module categories in the sense of \cite{Brochier2013}.
\end{theorem}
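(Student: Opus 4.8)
The plan is to combine the Definition-Proposition stating that $E_2$-modules over $\cA$ are precisely right module categories over $(\int_{Ann}\cA, T_{St})$ with Proposition~\ref{prop:AnnulusAction}, applied in the universal case. First I would unwind what a right module category over $\int_{Ann}\cA$ amounts to: giving a right action of the monoidal category $\int_{Ann}\cA$ on a category $\cM$ is the same as giving a monoidal functor $\int_{Ann}\cA \to \underline{\End}_{\K}(\cM)^{\mathrm{rev}}$ (or, packaging it slightly differently, a monoidal functor into the appropriate category of endofunctors of $\cM$, i.e.\ a left module structure encoded via endofunctors). So I would apply Proposition~\ref{prop:AnnulusAction} with $\cB = \underline{\End}(\cM)$ (suitably interpreted as a tensor category acting on $\cM$): such a tensor functor is the same as a pair $(F,\nu)$, where $F:\cA\to \underline{\End}(\cM)$ is a tensor functor --- equivalently, a right $\cA$-module category structure on $\cM$, with $X\in\cA$ acting by the endofunctor $-\ot X$ --- and $\nu$ is a natural automorphism of $F$ satisfying \eqref{eq:AnnAction1} and \eqref{eq:AnnAction2}.

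The second step is to translate $\nu$ and its two equations into the data $(E, \eqref{eq:octagon}, \eqref{eq:DMcat})$ of Definition~\ref{def:BraidedModule}. Since $F(X)$ is the endofunctor $(-\ot X)$ of $\cM$, a natural automorphism $\nu$ of the tensor functor $F$ is precisely a natural automorphism $E_{M,X}$ of the action bifunctor $\ot:\cM\times\cA\to\cM$. Under this dictionary --- reading $\id_X \ot \nu_Y$, $\nu_Y\ot\id_X$ etc.\ as the corresponding composites of endofunctors evaluated on an object $M\in\cM$, and being careful that $\cB = \underline{\End}(\cM)$ enters with the reversed multiplication so that composition order matches the right-module convention --- equation \eqref{eq:AnnAction1} becomes exactly the octagon axiom \eqref{eq:octagon} $E_{M\ot X,Y}=\sigma_{X,Y}^{-1}E_{M,Y}\sigma_{Y,X}^{-1}$, and equation \eqref{eq:AnnAction2} becomes exactly \eqref{eq:DMcat} $E_{M,X\ot Y}=E_{M,X}E_{M\ot X,Y}\sigma_{Y,X}\sigma_{X,Y}$. (In the non-strict setting one inserts associators in the evident places, as in the footnote to Definition~\ref{def:BraidedModule}.) Chasing the naturality squares then shows that a $\nu$ satisfying Proposition~\ref{prop:AnnulusAction} is the same thing as an $E$ satisfying Definition~\ref{def:BraidedModule}, and conversely.

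I expect the main obstacle to be bookkeeping rather than conceptual: one must pin down exactly which "rev"/op conventions are in force so that a \emph{right} $\int_{Ann}\cA$-module structure on $\cM$ corresponds to a tensor functor $\int_{Ann}\cA \to \cB$ with $\cB$ acting on $\cM$ on the correct side, and then verify that under this identification the pointwise tensor structure $T_{St}$ on $\int_{Ann}\cA$ (the one appearing in the Definition-Proposition and in the excision axiom) is precisely the monoidal structure whose functors Proposition~\ref{prop:AnnulusAction} classifies --- this is why the proof of that Proposition went to the trouble of promoting the coequalizer of the two $\cA$-actions on $\cA$ to a coequalizer \emph{in tensor categories}, so the needed compatibility is already established there. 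The remaining point requiring care is matching the precise placement of the braidings: \eqref{eq:AnnAction1} has $\sigma_{X,Y}^{-1}$ on the left and $\sigma_{Y,X}^{-1}$ on the right of $\nu_Y$, and one must check this lines up with \eqref{eq:octagon} and not with one of the integer's-worth of reframed variants discussed in Remark~\ref{different framings}; since we have fixed the blackboard (annulus) framing on both sides, this is consistent.

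\begin{proof}
Combine the Definition-Proposition of~\cite{Ginot2014}, identifying $E_2$-modules over $\cA$ with right module categories over $(\int_{Ann}\cA, T_{St})$, with Proposition~\ref{prop:AnnulusAction}. A right module category structure on $\cM$ over $\int_{Ann}\cA$ is the same as a tensor functor $\int_{Ann}\cA \to \underline{\End}(\cM)$ (with the convention making $\underline{\End}(\cM)$ act on $\cM$ on the appropriate side); by Proposition~\ref{prop:AnnulusAction}, and since the tensor structure on $\int_{Ann}\cA$ classified there is exactly $T_{St}$, this is a pair $(F,\nu)$ with $F$ a tensor functor $\cA\to\underline{\End}(\cM)$ and $\nu$ a natural automorphism of $F$ satisfying~\eqref{eq:AnnAction1} and~\eqref{eq:AnnAction2}. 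The tensor functor $F$ is precisely a right $\cA$-module category structure on $\cM$, $X$ acting by $-\ot X$. Evaluating $\nu$ on objects $M\in\cM$ produces a natural automorphism $E_{M,X}$ of the action bifunctor $\ot:\cM\times\cA\to\cM$; unwinding the identifications $F(X)=(-\ot X)$, equation~\eqref{eq:AnnAction1} becomes~\eqref{eq:octagon} and equation~\eqref{eq:AnnAction2} becomes~\eqref{eq:DMcat} (inserting associators in the non-strict case). Thus $E_2$-modules over $\cA$ are identified with braided module categories over $\cA$ in the sense of~\cite{Brochier2013}.
\end{proof}
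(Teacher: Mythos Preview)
Your proposal is correct and follows essentially the same approach as the paper: the paper's proof is a terse two-sentence version of exactly what you wrote, namely ``a right module $\cM$ over $\int_{Ann}\cA$ is characterized by the tensor functor $\int_{Ann}\cA\to\End(\cM)$ given by $X\mapsto -\ot X$'', followed by invoking Proposition~\ref{prop:AnnulusAction} and declaring the translation to the braided module axioms ``straightforward''. Your version is more explicit about the bookkeeping (the rev/op conventions, and the need for $T_{St}$ to match the monoidal structure classified in Proposition~\ref{prop:AnnulusAction}), which the paper leaves implicit.
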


\begin{proof}
	A right module $\cM$ over $\int_{Ann} \cA$ is characterized by the tensor functor $\int_{Ann}\cA\rightarrow \End(\cM)$ given by $X\mapsto - \ot X$. It means that $\cM$ has to be an $\cA$-module and that one has to provide the functor $\cA \rightarrow \End(\cM)$ with a natural automorphism satisfying the axioms above. This is straightforward to check that this turns $\cM$ into a braided module category.
\end{proof}

\subsection{The oriented case}

Recall that factorization homology of framed surfaces descends to an invariant of \emph{oriented} surfaces provided $\cA$ is balanced, i.e. that $\cA$ is endowed with an automorphism $\theta: \id_\cA\to\id_\cA$ of the identity functor of $\cA$, satisfying the coherence relation,
$$\theta_{V\ot W} = \sigma_{W,V}\circ\sigma_{V,W}\circ (\theta_V\otimes \theta_W).$$

In the same way, one can ask what additional structure is needed on a braided module category $\cM$ over a balanced braided monoidal category $\cA$ in order to obtain invariants of oriented marked surfaces. We have:

\begin{theorem}\label{thm:balancedModule} Let $\cA$ denote a braided tensor category and let $\cM\in \Rex$.
\begin{enumerate}
	\item Given a braided module category structure on $\cM$, the additional structure of a $\Disk^2_{or}$-module extending $\cA$ and $\cM$ consists, first of all of a balancing on $\cA$, and secondly of a ``balancing automorphism" $\phi_\cM:\id_\cM\to\id_\cM$ of the identity functor on $\cM$, satisfying the coherence:
\begin{equation}\label{eq:balancedModule}
\phi_{M\ot X}=E_{M,X}\circ(\phi_M\ot\theta_X)
\end{equation}
We refer to a braided module category $\cM$, equipped with a $\phi_\cM$, as a ``balanced braided module category".

\item Suppose that $\cA$ is a balanced braided tensor category, and that $\cM$ is a braided module category for $\cA$.  Then $\cM$ admits a canonical structure of a balanced braided module category.
\end{enumerate}
\end{theorem}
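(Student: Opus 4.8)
The plan is to treat the two parts in order, and to let Part (1) — the extraction of the coherence data for a $\Disk_2$-module — essentially determine what must be checked in Part (2). For Part (1), I would recall that a $\Disk_2$-module extending the $E_2$-module $\cM$ is, by definition, a symmetric monoidal functor $\Disk^2_{or,mkd}\to\cC$ whose restriction to $\Disk^2_{fr,mkd}$ recovers the given $(\cA,\cM)$. Passing from the framed to the oriented marked disc category adds exactly the circle's worth of rotations of a disc and of a once-marked disc. On the unmarked generator $\RR^2$ this produces precisely the balancing $\theta$ on $\cA$ together with its hexagon-type coherence $\theta_{V\ot W}=\sigma_{W,V}\sigma_{V,W}(\theta_V\ot\theta_W)$, exactly as in the unmarked case treated in \cite{Ben-Zvi2015}. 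On the once-marked generator $\RR^2_{\mathbf 0}$, the loop of rotations (the generator of $\pi_1$ of oriented self-embeddings of the marked disc) must be sent to an automorphism $\phi_\cM$ of $\id_\cM$; the compatibility of this loop with the embedding of one marked and one unmarked disc into a marked disc — i.e.\ rotating the big marked disc versus rotating the two inner discs and then reconciling via the $E_2$-module structure map — is precisely the braid/rotation relation that, read off in the category $\cC=\Rex$, becomes equation \eqref{eq:balancedModule}: $\phi_{M\ot X}=E_{M,X}\circ(\phi_M\ot\theta_X)$. I would phrase this by the same Kan-extension/universal-property bookkeeping used for $\int_{Ann}\cA$ above, rather than by drawing all the pictures: the point is that $\Disk^2_{or,mkd}$ is generated over $\Disk^2_{fr,mkd}$ by these rotation loops, and the monoidal coherences among them cut down to exactly \eqref{eq:balancedModule} plus the balancing axiom on $\cA$.

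For Part (2), assume $\cA$ is balanced with balancing $\theta$ and $\cM$ is merely a braided module category, and construct $\phi_\cM$ directly. The natural candidate, suggested by taking $M$ to be a ``generator'' and unwinding \eqref{eq:balancedModule}, is to define $\phi_M$ on $M\ot X$ (for $X$ ranging over $\cA$, which suffices since $\cM$ is generated by objects of the form $M_0\ot X$ up to the module action — more carefully, one defines it on the regular-type objects and extends) via an iterated application of $E$ against $\theta$; concretely one expects a formula of the shape $\phi_{M} = E_{?,?}\circ(\cdots\ot\theta_{\cdots})$ obtained by using the Donin--Kulish--Mudrov form \eqref{eq:DM} of the axiom together with the balancing coherence on $\cA$. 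The verification then has two halves: first, that $\phi_\cM$ so defined is a well-defined natural automorphism of $\id_\cM$ — this uses naturality of $E$ in both variables and the octagon axiom \eqref{eq:octagon} to check independence of the presentation of an object; second, that it satisfies \eqref{eq:balancedModule}, which is a direct computation feeding \eqref{eq:DMcat} (or \eqref{eq:DM}) and the $\cA$-balancing coherence into each other, of the same flavor as the computation alluded to in Remark \ref{different framings} showing $E^{-1}(\id\ot\theta^{-1})$ satisfies the cylinder-framing axioms.

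The main obstacle I anticipate is Part (2)'s well-definedness, i.e.\ checking that the formula for $\phi_M$ does not depend on how an object of $\cM$ is written using the module action — since a braided module category need not have a generator, one cannot simply declare $\phi$ on $M_0\ot X$ and be done, and one must instead argue either (a) via a coend/universal argument that the assignment $X\mapsto$ (the $E$-and-$\theta$ expression) glues to a global automorphism of $\id_\cM$, or (b) reduce to the generator case using Theorem \ref{quantum-moment-map-theorem} and then check the construction is intrinsic. I would take route (a): express $\phi$ as the composite of $E$ (which is already a \emph{global} natural automorphism of the action bifunctor $\ot:\cM\times\cA\to\cM$) with the balancing $\theta$ pulled back along the action, and show the two module-associativity constraints are intertwined by exactly \eqref{eq:octagon}, so that the resulting automorphism descends along the action functor to $\id_\cM$. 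The non-uniqueness asserted in the introduction (``though not unique'') is then visible from the freedom to twist $\phi_\cM$ by any automorphism of $\id_\cM$ arising from an automorphism of the module structure, and I would note this in a remark rather than belabor it.
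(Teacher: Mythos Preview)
Your treatment of Part~(1) is fine and matches the paper: the extra data in passing from $\Disk^2_{fr,mkd}$ to $\Disk^2_{or,mkd}$ is exactly the rotation loop on the marked disc, and the compatibility with the action embedding unwinds to \eqref{eq:balancedModule}.

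Part~(2), however, has a real gap. You propose to build $\phi_\cM$ directly out of $E$ and $\theta$, hoping that the automorphism $E\circ(\id\ot\theta)$ of the action bifunctor ``descends along the action functor'' to an automorphism of $\id_\cM$. It does not. If such a $\phi$ existed with $\phi_{M\ot X}=E_{M,X}\circ(\id_M\ot\theta_X)$, then evaluating at $X=\mathbf 1_\cA$ forces $\phi_M=E_{M,\mathbf 1}=\id_M$, whence $E_{M,X}=\id_M\ot\theta_X^{-1}$ for all $M,X$ --- false for a general braided module. More conceptually, equation~\eqref{eq:balancedModule} is a \emph{constraint} relating $\phi$ to itself at two different objects, not a recursive definition; there is no anchor object in $\cM$ from which to bootstrap. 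Neither route~(a) nor route~(b) in your plan supplies the missing datum.

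The paper's argument is genuinely different and avoids this problem. Since $\cA$ is balanced, $\int_{Ann}\cA$ is an invariant of the \emph{oriented} annulus and therefore carries a canonical automorphism $\psi$ of its identity functor, induced by the loop of rotations. The braided module structure on $\cM$ is, by Proposition~\ref{prop:AnnulusAction}, a tensor functor $G:\int_{Ann}\cA\to\End(\cM)$ under which $\nu$ maps to $E$. One then \emph{defines} $\phi$ as the image of $\psi$ on the unit (concretely, $G(\psi_{\mathbf 1_{\int_{Ann}\cA}})$, an automorphism of $G(\mathbf 1)\cong\id_\cM$). The topological identity $\nu_X=\psi_{F(X)}\,\theta_X^{-1}\,\psi_{\mathbf 1}^{-1}$ in $\int_{Ann}\cA$ then transports under $G$ directly to \eqref{eq:balancedModule}. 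The point you are missing is that the balancing $\psi$ on the annulus category is the genuinely new ingredient supplied by the balancing on $\cA$; $\phi$ is not manufactured from $E$ and $\theta$ alone, but from $\psi$.
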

\begin{proof}
The first part is clear from the picture: $\phi$ is the automorphism of the identity functor of $\cM$ induced by the rotation of a marked disc inside a larger one.

If $\cA$ is balanced, then $\int_{Ann}\cA$ is independent of the framing and in particular comes equipped with an automorphism of the identity functor $\psi$ coming from the loop, in the space of oriented diffeomorphisms of the annulus, which rotates $\theta$ degrees about the origin, for $\theta\in [0,2\pi]$. Let $F=F_{bd}:\cA\longrightarrow \int_{Ann}\cA$ be band tensor functor defined in Figure~\ref{annulus-tensor-functor}, and $\nu$ the automorphism of $F$ as in Proposition~\ref{prop:AnnulusAction}. It is again a direct check that for all $X\in \cA$,
\begin{equation}\label{eq:Balancing}
	\nu_X=\psi_{F(X)}\theta_X^{-1}\psi_{\un_{\int_{Ann}\cA}}^{-1}
\end{equation}

where we identify $F(X)$ with $F(X)\ot\un_{\int_{Ann}\cA}$.

By Proposition~\ref{prop:AnnulusAction}, a braided module structure $E$ on $\cM$ is the same as a factorization of the functor
\[
	\cA\longrightarrow \End(\cM)
\]
as 
\[
	\cA\longrightarrow \int_{Ann} \cA \longrightarrow \End(M)
\]
and $E$ is defined as the image of $\nu$ through the second functor. Taking the image of~\eqref{eq:Balancing} through the second functor gives the desired balanced structure on $\cM$.

\end{proof}

\begin{remark} We note that the balancing asserted in (2), while canonical, is not unique.  There may be many different balancings on a given braided module category.
\end{remark}

\section{Reconstruction theorems}\label{reconstruction-section}
In~\cite{Ben-Zvi2015}, we developed a framework to describe the gluing of surfaces along intervals in their boundary, using monadic techniques:  the factorization homology for a surface with a marked interval in the boundary obtained the structure of an $\cA$-module category, and we realized these $\cA$-module categories as the categories of modules for explicit algebras $A_S$ in $\cA$.  In this section, we will develop some of the general algebraic tools we will need to glue along circles, rather than intervals, in the boundary of a surface.  Namely, the pointwise tensor structure from Figure~\ref{annulus-tensor-functor} defines a dominant tensor functor from $\cA$ to $\int_{Ann}\cA$, and we wish to apply monadic techniques to understand the resulting structures algebraically.

\textbf{Assumptions}  We will typically work with the $(2,1)$-categories $\Rex$ of essentially small finitely cocomplete $\K$-linear categories, right exact functors and their natural isomorphisms, and the $(2,1)$ category $\LFP$ of compactly generated presentable categories with compact and cocontinuous functors and their natural isomorphisms.  These each carry the so-called Kelly-Deligne tensor product, and are in fact equivalent to one another as symmetric monoidal $(2,1)$-categories:  the functors $\ind$ and $\comp$, of $\ind$-completing a $\Rex$ category to a $\LFP$ category, and taking the $\Rex$ subcategory of compact objects of an $\LFP$ category, are mutually inverse equivalences.  For orientation, let us remark that small abelian categories are in particular $\Rex$, while Grothendieck abelian categories are in particular $\LFP$.

By a tensor (or braided tensor) category in $\Rex/\LFP$, we will mean simply an $E_1$- (or $E_2$-) algebra $\cA$ in $\Rex/\LFP$.  We will typically assume that $\cA$ is \emph{rigid}, i.e. that all (compact) objects are left and right dualizable.  This categorical framework is discussed in detail in \cite{Ben-Zvi2015}, to which we refer the reader for complete definitions. 

\subsection{Reconstruction from tensor functors}
Let $\cA$ and $\cB$ be tensor categories in $\LFP$ and $F:\cA\rightarrow \cB$ be a tensor functor.  Suppose that $\cA$ is rigid, and assume the $\un_\cB$ is a pro-generator for the $\cA$-module structure on $\cB$ induced by $F$\footnote{Note that it is a pro-generator for the left $\cA$-action induced by $F$ if and only if it is for the right one.}. The definition of a pro-generator implies that $F=\act_{\un_\cB}$ has a cocontinuous right adjoint which is also faithful, i.e. $F$ is dominant. Hence, by applying Theorem~4.5 from~\cite{Ben-Zvi2015} to $\cB$ as a $\cA$-module category, we see that $\cB$ admits a simultaneous description, both as right-, and as left- $\underline{\End}(\mathbf{1}_\cB)$-modules in $\cA$, where we recall that $\underline{\End}(\mathbf{1}_\cB)\cong F^R(\mathbf{1}_\cB)$.  In this section, we extend this description to encompass the tensor structure on $\cB$ as well.

\begin{theorem}\label{reconstruction}  We have:
\begin{enumerate}
\item Under this description $F$ identifies with the free module functor $V\mapsto \underline{\End}(\mathbf{1}_\cB)\ot V$, and $F^R$ identifies with the forgetful functor back to $\cA$, where $\underline{\End}$ means internal endomorphism of $\cB$ as an $\cA$-module.
\item  The right and left $\underline{\End}(\mathbf{1}_\cB)$-module structures on $F^R(b)$, for any $b\in\cB$, are related by the isomorphisms:
\begin{equation}\underline{\End}(\mathbf{1}_\cB)\ot F^R(b) \cong F^R(\mathbf{1}_\cB\ot b) \cong F^R(b \ot \mathbf{1}_\cB) \cong F^R(b) \ot \underline{\End}(\mathbf{1}_\cB).\label{eqn-exchanges}\end{equation}
These actions commute, and we obtain a faithful tensor functor,
$$\widetilde{F^R}:\cB\to \underline{\End}(\mathbf{1}_\cB)\textrm{-bimodules in $\cA$}.$$
\item Moreover $\widetilde{F^R}$ becomes a tensor functor, when we equip the category of $\underline{\End}(\mathbf{1}_\cB)$-bimodules in $\cC$ with the relative tensor product of bimodules:
$$M\underset{\underline{\End}(\mathbf{1}_\cB)}{\ot} N:= \operatorname{colim} \left(M\ot \underline{\End}(\mathbf{1}_\cB)\ot N \underset{\operatorname{act}_L}{\overset{\operatorname{act}_R}{\rightrightarrows}} M\ot N\right),$$
\end{enumerate}
\end{theorem}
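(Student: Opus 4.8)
The plan is to bootstrap from the one-sided reconstruction result (Theorem~4.5 of~\cite{Ben-Zvi2015}), applied to both the left and the right $\cA$-module structure on $\cB$, and then promote the outcome to the monoidal level. Write $A=\underline{\End}(\mathbf{1}_\cB)$. Recall from the standing assumptions that $F^R$ is cocontinuous and faithful (the progenerator hypothesis). Since $\cA$ is rigid, its compact objects are dualizable, and since a tensor functor preserves duals there is a projection isomorphism $F^R(F(X)\ot b)\cong X\ot F^R(b)$ --- first for $X$ dualizable, by a duality/Yoneda computation, then for all $X$ because both sides are cocontinuous in $X$ and $\cA$ is compactly generated. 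A consequence is that the monad $F^RF$ is identified with tensoring by $A$.

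Part~(1) is essentially a citation. Applying Theorem~4.5 of~\cite{Ben-Zvi2015} to $\cB$ as a left $\cA$-module via $F$, with progenerator $\mathbf{1}_\cB$, gives $\cB\simeq A\modu_\cA$ under which the internal-hom functor $\underline{\Hom}_\cA(\mathbf{1}_\cB,-)$ is the forgetful functor; the identification $\underline{\Hom}_\cA(\mathbf{1}_\cB,-)\cong F^R$, and hence $A\cong F^R(\mathbf{1}_\cB)$, is immediate from the defining adjunction $\Hom_\cA(V,\underline{\Hom}_\cA(X,b))\cong\Hom_\cB(F(V)\ot X,b)$ with $X=\mathbf{1}_\cB$ together with $F\dashv F^R$; and, the left adjoint of a forgetful functor being free, $F$ becomes $V\mapsto A\ot V$. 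This exhibits $F\dashv F^R$ as monadic, $\cB$ being the category of modules over $F^RF$.

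For part~(2) I would rerun this argument for the right $\cA$-action $b\triangleleft V:=b\ot F(V)$, for which $\mathbf{1}_\cB$ is a progenerator by hypothesis; the same Yoneda computation identifies the underlying-object functor of the resulting reconstruction again with $F^R$. Hence $F^R(b)$ carries a left and a right $A$-action on a common underlying object. The compatibility isomorphisms~\eqref{eqn-exchanges} come from the lax monoidal structure of $F^R$ together with the projection isomorphism above, and the two actions commute because, back in $\cB$, left- and right-multiplication by $\mathbf{1}_\cB$ are the two legs of the associativity square for $\mathbf{1}_\cB\ot b\ot\mathbf{1}_\cB$. This produces the functor $\widetilde{F^R}$ to $A$-bimodules in $\cA$, whose faithfulness is inherited from that of $F^R$.

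Part~(3) is the crux, and I expect it to be the main obstacle --- not through any single difficulty but through the accumulated bookkeeping. Conceptually the cleanest route is: $\ot_\cB$ makes $\cB$ act on itself, and $b\ot(-)$ is right-$\cA$-linear for every $b$ (since $b\ot(b'\triangleleft V)=(b\ot b')\triangleleft V$), so $b\mapsto b\ot(-)$ is a strong monoidal functor $(\cB,\ot_\cB)\to(\End_\cA(\cB),\circ)$ into the right-$\cA$-linear endofunctors of $\cB$ under composition; composing with the standard Morita/Eilenberg--Watts equivalence between $\End_\cA(\cB)$ and $A$-bimodules in $\cA$ (a consequence of parts~(1)--(2)), which carries composition to the relative tensor product $\ot_A$, and evaluating the resulting bimodule at the free module $\mathbf{1}_\cB$ (which yields $F^R(b)$), recovers $\widetilde{F^R}$, now visibly monoidal for $\ot_A$. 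What this route hides --- and the step I would actually have to carry out --- is the natural isomorphism $F^R(b\ot b')\cong F^R(b)\ot_A F^R(b')$: I would produce it by resolving the first factor through the monadic bar resolution $FF^RFF^R(b)\rightrightarrows FF^R(b)\to b$ (a reflexive coequalizer, by monadicity), applying $(-)\ot b'$ and then $F^R$ termwise (legitimate since $\ot_\cB$ is cocontinuous in each variable in $\Rex$ and $F^R$ is cocontinuous), and evaluating each term via the projection isomorphism $F^R(F(X)\ot b')\cong X\ot F^R(b')$; the outcome is exactly the coequalizer $F^R(b)\ot A\ot F^R(b')\rightrightarrows F^R(b)\ot F^R(b')$ defining $F^R(b)\ot_A F^R(b')$, with the two maps $\act_R\ot\id$ and $\id\ot\act_L$. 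It then remains to check naturality, compatibility with the bimodule structures of part~(2), and that the transported associativity and unit constraints agree on the nose with those of $\ot_A$ --- the unit being immediate, since $\widetilde{F^R}(\mathbf{1}_\cB)=A$ is the regular bimodule. By contrast, parts~(1) and~(2) are close to formal once Theorem~4.5 and the projection isomorphism are in hand.
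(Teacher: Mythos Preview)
Your proposal is correct and, for parts~(1) and~(2), essentially coincides with the paper's argument: the paper carries out the same Yoneda/duality computation you describe to identify $F^RF(V)\cong F^R(\mathbf{1}_\cB)\ot V$, and dispatches part~(2) with the same remark that the two module structures come from the adjunction data and are interchanged by the isomorphisms~\eqref{eqn-exchanges}.

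For part~(3) the strategies are close but not identical. The paper argues by reducing to the case where both factors are free, i.e.\ $b=F(m)$ and $b'=F(n)$ with $m,n\in\cA$ (since free bimodules generate under colimits), applies $F^RF$ to the trivial coequalizer $m\ot n=\operatorname{colim}(m\ot\mathbf{1}_\cA\ot n\rightrightarrows m\ot n)$, and then uses the comparison map together with the projection formula $F^R(F(m)\ot\mathbf{1}_\cB\ot F(n))\cong F(m)\ot F^R(\mathbf{1}_\cB)\ot F(n)$, which is an isomorphism here because $F$ is free. Your bar-resolution argument is the asymmetric version of the same idea: you resolve only the first factor $b$ by the monadic coequalizer $FF^RFF^R(b)\rightrightarrows FF^R(b)\to b$ and leave $b'$ arbitrary, invoking cocontinuity of $F^R$ and of $\ot_\cB$ to push $F^R$ through; either route lands on the defining coequalizer for $F^R(b)\ot_A F^R(b')$. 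Your additional Eilenberg--Watts route, realizing $\widetilde{F^R}$ as the composite $\cB\to\End_\cA(\cB)\simeq A\text{-bimod}$ and thereby getting monoidality for free, is not in the paper; it is cleaner conceptually and avoids the coherence bookkeeping you flag, at the cost of importing the Morita equivalence as a black box.
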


\begin{proof}
For Claim (1), we need a natural isomorphism $F^RF(V)\cong \underline{\End}(\mathbf{1}_\cB)\ot V$.
Applying the tensor structure and the adjunction counit, we obtain natural isomorphisms:
\begin{align*}\Hom(C,F^RF(V)) &\cong \Hom(F(C),F(V))\\
&\cong \Hom(^*F(V)\ot F(C),\mathbf{1}_\cB)\\
&\cong \Hom(F(^*V)\ot F(C),\mathbf{1}_\cB)\\
&\cong \Hom(F(^*V\ot C),\mathbf{1}_\cB)\\
&\cong \Hom(^*V\ot C,F^R(\mathbf{1}_\cB))\\
&\cong \Hom(C,F^R(\mathbf{1}_\cB)\ot V).
\end{align*}
Hence by Yoneda's lemma, we have a natural isomorphism $F^RF(V)\cong F^R(\mathbf{1}_\cB)\ot V$, as desired.

Claim (2) is clear: the $\underline{\End}(\mathbf{1}_\cB)$-actions on $F^R(b)$ are given in terms of the adjunction data $(F,F^R)$; the isomorphisms \eqref{eqn-exchanges} are natural in $b$, and interchange left and right modules in the adjunction.

For Claim (3), we note that the category of $F^R(\mathbf{1}_\cB)$-bimodules is generated under colimits by the free bimodules, so that if we have such an isomorphism for the free bimodules, it necessarily induces the same natural isomorphism for all bimodules.   Hence, we may restrict to the case that $M$ and $N$ are of the form $F(m)$, $F(n)$, for some $m,n \in \cB$.  In that case, applying $F^RF$ to the obvious idenity,
$$m\ot n = \operatorname{colim} \left(m\ot \mathbf{1}_\cA\ot n \underset{\operatorname{act}_L}{\overset{\operatorname{act}_R}{\rightrightarrows}} m\ot n\right),$$
gives $$F^R(F(m)\ot F(n)) = F^R\left(\operatorname{colim}\left(F(m)\ot \mathbf{1}_\cB\ot F(n) \underset{\operatorname{act}_L}{\overset{\operatorname{act}_R}{\rightrightarrows}} F(m)\ot F(n)\right)\right).$$
While we cannot commute $F^R$ past the colimit, we have the canonical comparison map:
\begin{multline*}
\operatorname{colim}\left(F^R(F(m)\ot \mathbf{1}_\cB\ot F(n)) \underset{\operatorname{act}_L}{\overset{\operatorname{act}_R}{\rightrightarrows}} F^R(F(m)\ot F(n))\right)\\ \to F^R\left(\operatorname{colim}\left(F(m)\ot \mathbf{1}_\cB\ot F(n) \underset{\operatorname{act}_L}{\overset{\operatorname{act}_R}{\rightrightarrows}} F(m)\ot F(n)\right)\right).
\end{multline*}
Moreover, we have $F^R(F(m)\ot \mathbf{1}_\cB\ot F(n))\cong F(m)\ot F^R(\mathbf{1}_\cB)\ot F(n)$, because the right adjoint $F^R$ to the $\cA$-bimodule functor $F$ is canonically a $\cA$-bimodule functor whenever $\cA$ is rigid.
Finally, we see that the comparison map is in fact an isomorphism, because $F$ is the free bimodule functor.
\end{proof}

Having described the tensor structure on $\cB$ monadically through $\cA$, we now turn to describing algebra objects, and hence module categories, for $\cB$, monadically in terms of $\cA$.  We have:

\begin{proposition}\label{momentmaps} The equivalence $\cB\simeq \underline{\End}(\mathbf{1}_\cB)\modu_{\cA}$ extends to an equivalence:
$$\left\{\textrm{Algebras in $\cB$}\right\} \simeq \left\{\begin{array}{l}\textrm{Algebras in $\cA$, equipped with an}\\\textrm{algebra homomorphism from $\underline{\End}(\mathbf{1}_\cB)$}\end{array}\right\}.$$
\end{proposition}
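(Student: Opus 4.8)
The plan is to produce the correspondence from the adjunction $(F,F^R)$ and then promote it using the tensor reconstruction of Theorem~\ref{reconstruction}. Write $R=\underline{\End}(\mathbf{1}_\cB)\cong F^R(\mathbf{1}_\cB)$. Since $F$ is strong monoidal, its right adjoint $F^R$ is canonically lax monoidal, so it sends an algebra $B\in\cB$ to an algebra $F^R(B)\in\cA$ and sends the unit $u\colon\mathbf{1}_\cB\to B$, which is an algebra map, to an algebra homomorphism $F^R(u)\colon R\to F^R(B)$. This defines a functor $\Phi\colon B\mapsto(F^R(B),F^R(u))$ from algebras in $\cB$ to pairs (algebra in $\cA$, algebra homomorphism out of $R$), and by Theorem~\ref{reconstruction}(1) it refines the underlying-object equivalence $\cB\simeq R\modu_\cA$: the left $R$-action on $F^R(B)$ induced by $F^R(u)$ and multiplication is the canonical one, as one checks by feeding the unit axiom for $B$ through the lax structure constraint of $F^R$.

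Next I would verify that $\Phi$ is fully faithful. The functor $\widetilde{F^R}\colon\cB\to(\text{$R$-bimodules in }\cA,\ \ot_R)$ of Theorem~\ref{reconstruction} is not merely faithful but fully faithful, because a left $R$-module map between $F^R(b)$ and $F^R(b')$ is automatically a map of the canonical bimodules: the right $R$-action on $F^R(b)$ is manufactured from the isomorphisms \eqref{eqn-exchanges}, which are natural in the argument $b$. A fully faithful strong monoidal functor is fully faithful on categories of algebra objects, so algebras in $\cB$ are identified with those $\ot_R$-algebras whose underlying $R$-bimodule lies in the essential image of $\widetilde{F^R}$. Combining this with the standard identification of $\ot_R$-algebras (i.e.\ $R$-rings) with algebras $A\in\cA$ carrying an algebra homomorphism $\mu\colon R\to A$ — the bimodule structure being left and right multiplication through $\mu$, the $\ot_R$-product being the descent of $m_A$, which is $R$-balanced by associativity, and $\mu$ being the unit — one already obtains a fully faithful functor from algebras in $\cB$ into pairs (algebra in $\cA$, homomorphism out of $R$), compatibly with $\cB\simeq R\modu_\cA$.

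The remaining point, which I expect to be the crux, is essential surjectivity: given $(A,\mu)$, the left $R$-module underlying $A$ (acting through $\mu$ and multiplication) corresponds to a unique $b\in\cB$ under $\cB\simeq R\modu_\cA$, and one must show that the $R$-bimodule structure on $A$ given by left and right multiplication through $\mu$ coincides with the canonical bimodule structure carried by $\widetilde{F^R}(b)$ — equivalently, that $(A,\mu)$ already lies in the essential image. Granting this, $\widetilde{F^R}(b)$ inherits the $\ot_R$-algebra structure and, by full faithfulness, $b$ inherits a $\cB$-algebra structure whose image under $\Phi$ is $\cong(A,\mu)$; that $\Phi$ and this construction are mutually inverse is then formal. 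I would prove the matching by unwinding the canonical right $R$-action on $F^R(b)$ as the lax constraint of $F^R$ followed by the right unitor of $\cB$ — dually to the left-action computation of the first paragraph — and comparing it with right multiplication through $\mu$ via the unit axiom for $A$ together with the compatibility of $\mu$ with the lax unit constraint $\mathbf{1}_\cA\to R$ of $F^R$. This coherence verification, tracking unitors, associators and the lax monoidal structure of $F^R$, is the only non-formal ingredient; once it is in place the extension of the equivalence to module categories over $\cB$ (algebras together with a left module) follows by the same monadic reasoning.
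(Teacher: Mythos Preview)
Your approach is correct and is essentially the same as the paper's: both send an algebra $b\in\cB$ to $F^R(b)$ together with the algebra map $F^R(u)\colon R\to F^R(b)$ coming from the lax monoidal structure on $F^R$, and both invert this by regarding a pair $(A,\mu)$ as an algebra object in $R$-modules. The paper organizes the inverse as an explicit construction and declares the two functors ``mutually inverse for tautological reasons''; you organize it as fully faithful plus essentially surjective, which forces you to isolate the one non-formal step --- that for an arbitrary $(A,\mu)$ the right $R$-action by $a\mapsto a\cdot\mu(r)$ agrees with the canonical right action supplied by \eqref{eqn-exchanges}. The paper's ``tautological'' hides exactly this check (it is immediate in the forward direction from the unit axiom for $b$ and naturality of the lax constraint, and your sketch correctly indicates the dual verification for the converse). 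So your proof is a more careful version of the same argument; the extra observation that $\widetilde{F^R}$ is fully faithful (not merely faithful as stated in Theorem~\ref{reconstruction}), because the right action is natural in $b$, is a useful sharpening.
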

\begin{proof}
Given an algebra object $b$ in $\cB$, its image $F^R(b)$ in $\cA$ receives a canonical algebra homomorphism $F^R(\mathbf{1}_\cB)\to F^R(b),$ induced by the unit homomorphism $\mathbf{1}_\cB\to b$, through the lax tensor structure on $F^R$.  This provides a functor in the forward direction. 

Conversely, given an algebra in $\cB$ equipped with a homomorphism from $F^R(\mathbf{1}_\cB)$, we make it an algebra in the category of $F^R(\mathbf{1}_\cB)$-modules via this homomorphism; this clearly endows it with the structure of an algebra object in $F^R(\mathbf{1}_\cB)$-modules.  This provides a functor in the reverse direction.

The two functors we have constructed are mutually inverse for tautological reasons: the unit homomorphism in the forward direction, and the $F^R(\mathbf{1}_\cB)$-module structure in the reverse direction are simply equivalent data.

\end{proof}

Moreover, we can describe the $\cB$ action on one of its module categories monadically through $\cA$:

\begin{theorem}\label{inducedactions}
Let $\cM$ be a $\cB$-module category, and let $M\in\cM$ be a progenerator for the induced $\cA$-action.  Recalling the equivalences of $\cA$-modules,
$$\cB \simeq \operatorname{mod}_{\cA}-\underline{\End}(\mathbf{1}_\cB), \qquad \cM \simeq \underline{\End}(M)\modu_{\cA},$$
and the canonical algebra homomorphism,
$$\rho:\underline{\End}(\mathbf{1}_\cB)\to \underline{\End}(M),$$
we have:
\begin{enumerate}
	\item  The action of any $b\in\cB$ on any $N\in \cM$ is given by:
$$N\bt b \mapsto b\underset{\underline{\End}(\mathbf{1}_\cB)}{\ot} N.$$
	\item The $\cA$-progenerator $M$ is also a progenerator for the $\cB$-action.
	\item The functor $\cM=\underline{\End}_{\cB}(M)\modu_{\cB}\rightarrow \underline{\End}_{\cA}(M)\modu_{\cA}$ induced by $\rho$ is an equivalence of $\cB$-module categories.
\end{enumerate}
\end{theorem}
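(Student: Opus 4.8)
The strategy is to reduce everything to part~(2): once $M$ is known to be a progenerator for the $\cB$-action, Theorem~4.5 of~\cite{Ben-Zvi2015} gives $\cM\simeq\underline{\End}_\cB(M)\modu_\cB$, and parts~(1) and~(3) become matches with structures already recorded in Theorem~\ref{reconstruction}. The one structural input I would use throughout is the comparison of internal Homs
\[
\underline{\Hom}_\cA(M,N)\;\cong\;F^R\bigl(\underline{\Hom}_\cB(M,N)\bigr),
\]
natural in $N\in\cM$, where $\underline{\Hom}_\cA$ refers to the $\cA$-module structure on $\cM$ obtained by restriction along $F$. I would prove this by a Yoneda argument in $\cA$: maps out of $X\in\cA$ into the left-hand side unwind, via the defining adjunction of the internal Hom and the identity $(-)\ot_\cA X=(-)\ot_\cB F(X)$, to $\Hom_\cM\bigl(M\ot F(X),N\bigr)\cong\Hom_\cB\bigl(F(X),\underline{\Hom}_\cB(M,N)\bigr)\cong\Hom_\cA\bigl(X,F^R\underline{\Hom}_\cB(M,N)\bigr)$, the last step being the $(F,F^R)$-adjunction. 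At $N=M$ this is moreover an isomorphism of algebra objects carrying the unit $\un_\cB\to\underline{\End}_\cB(M)$ to $\rho$; this is essentially the definition of $\rho$, and uses that $F^R$ is lax monoidal and---because $\cA$ is rigid---in fact an $\cA$-bimodule functor, exactly as in the proof of Theorem~\ref{reconstruction}(3).

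Granting the comparison isomorphism, part~(2) is formal. The functor $\underline{\Hom}_\cB(M,-)\colon\cM\to\cB$ is conservative: if $\underline{\Hom}_\cB(M,f)$ is invertible, so is its image under $F^R$, which by the comparison is $\underline{\Hom}_\cA(M,f)$, whence $f$ is invertible because $M$ is an $\cA$-progenerator. It is cocontinuous: the dominance hypothesis on $F$ makes $F^R$ faithful, hence conservative, and cocontinuous; since $F^R\circ\underline{\Hom}_\cB(M,-)\cong\underline{\Hom}_\cA(M,-)$ is cocontinuous and a conservative cocontinuous functor reflects colimits, $\underline{\Hom}_\cB(M,-)$ preserves them. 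Finally $M$ is compact in $\cM$ independently of which module structure is used to present $\cM$. Thus $M$ satisfies the hypotheses of the reconstruction theorem for the $\cB$-action, giving $\cM\simeq\underline{\End}_\cB(M)\modu_\cB$.

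For part~(3): the comparison isomorphism at $N=M$ identifies $\underline{\End}_\cA(M)\cong F^R\underline{\End}_\cB(M)$ as algebras, with $\rho$ the algebra map $\underline{\End}(\un_\cB)=F^R(\un_\cB)\to\underline{\End}_\cA(M)$, and the functor of the statement is then restriction of scalars along $\rho$. It is the third edge of the triangle whose other two edges are the equivalence $N\mapsto\underline{\Hom}_\cB(M,N)$ of part~(2) and the equivalence $N\mapsto\underline{\Hom}_\cA(M,N)$ assumed in the hypotheses; the triangle commutes precisely by the comparison isomorphism, so the third edge is an equivalence. It is a $\cB$-module functor because restriction of scalars along an algebra map is always a module functor over the ambient tensor category---here $\cB$, acting on $\underline{\End}_\cA(M)\modu_\cA$ as in part~(1).

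Finally part~(1): under $\cM\simeq\underline{\End}_\cB(M)\modu_\cB$ the $\cB$-action is tautologically $P\bt b\mapsto P\ot b$, tensor taken in $\cB$. Applying the tensor equivalence of Theorem~\ref{reconstruction}, which presents $\cB$ as $\underline{\End}(\un_\cB)$-bimodules in $\cA$ with $\ot_\cB$ becoming $\ot_{\underline{\End}(\un_\cB)}$, together with part~(3), rewrites this as the relative tensor product over $\underline{\End}(\un_\cB)$ against the bimodule $F^R(b)$---the asserted formula, the residual $\underline{\End}_\cA(M)$-module structure coming from the bimodule structure on $\underline{\End}_\cA(M)$ of Theorem~\ref{reconstruction}(2). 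The genuinely load-bearing step is the internal-Hom comparison isomorphism and its compatibility with the algebra and module structures in play; everything else is formal. The point requiring real care is the $\LFP$ bookkeeping---that $F^R$ is faithful, conservative, cocontinuous and (by rigidity of $\cA$) an $\cA$-bimodule functor, and that the relevant internal Homs exist---all of which follows from the standing rigidity and dominance hypotheses but should be invoked explicitly rather than taken for granted.
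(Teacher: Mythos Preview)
Your proof is correct and follows essentially the same route as the paper's: the key identity $\underline{\Hom}_\cA(M,-)\cong F^R\circ\underline{\Hom}_\cB(M,-)$ (which the paper phrases simply as ``the action of $\cA$ on $\cM$ is obtained by pull-back'') yields part~(2) by the same reflection argument, and parts~(1) and~(3) then follow from Theorem~\ref{reconstruction} and monadic base change. The only differences are cosmetic: you prove~(2) first and derive~(1) from~(3), whereas the paper asserts~(1) directly from Theorem~\ref{reconstruction} and cites Corollary~4.11 of~\cite{Ben-Zvi2015} for~(3); your explicit Yoneda derivation of the internal-Hom comparison and your two-out-of-three argument for~(3) are welcome unpackings of what the paper leaves implicit.
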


\begin{proof}
The first claim is a direct application of Proposition \ref{reconstruction}.  For the second claim, we recall that, to say $M$ is a pro-generator, is to say that the functor $act_M^R:\cM\to\cB$ is conservative, and co-continuous.  It follows from our assumptions that the composite functor $F^R\circ\act_M^R$ is conservative and co-continuous, as the action of $\cA$ on $\cM$ is obtained by pull-back, and $M$ is assumed to be a $\cA$ pro-generator.  By assumption $F^R:\cB\to\cA$ is conservative and co-continuous.  It is an easy exercise that conservative and co-continuous functors themselves reflect conservativity and co-continuity.  The third claim follows from monadicity for base change, Corollary~4.11 in~\cite{Ben-Zvi2015}.
\end{proof}
And finally, we can compute relative tensor product of module categories, simply as categories of bimodules:

\begin{corollary}\label{cor:bimoduleGluing}
Fix $\cA$, $\cB$ and $F$ as in Proposition \ref{momentmaps}.  Let $\cM$ and $\cN$ be left and right $\cB$-module categories, with $\cA$-progenerators $m$ and $n$, respectively.  Then we have an equivalence of categories,
$$\cM\underset{\cB}{\bt}\cN \simeq (\underline{\End}(m)-\underline{\End}(n))\operatorname{-bimod}_{\underline{\End}(\mathbf{1}_\cB)\modu_{\cA}}.$$
\end{corollary}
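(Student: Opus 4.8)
The plan is to deduce this formally from the reconstruction results just proved, by reducing everything to a manipulation of module categories over algebra objects. First I would record, via Theorem~\ref{inducedactions}, that the $\cB$-module categories $\cM$ and $\cN$ are each equivalent to the category of modules over an algebra object of $\cB$ -- namely $\underline{\End}_\cB(m)$, respectively $\underline{\End}_\cB(n)$ -- with the evident handedness conventions. Under the further equivalences $\cM\simeq\underline{\End}_\cA(m)\modu_\cA$ and $\cN\simeq\underline{\End}_\cA(n)\modu_\cA$ of Theorem~\ref{inducedactions}, Proposition~\ref{momentmaps} identifies these algebra objects of $\cB$ with the algebras $\underline{\End}_\cA(m)$ and $\underline{\End}_\cA(n)$ of $\cA$, each equipped with its canonical homomorphism (quantum moment map) $\rho_m,\rho_n$ from $R:=\underline{\End}(\mathbf{1}_\cB)$; this compatibility is exactly what the third claim of Theorem~\ref{inducedactions} encodes.

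Next I would invoke the standard identification of relative tensor products of module categories with categories of bimodules: for any tensor category $\cB$ and algebra objects $A,A'\in\cB$, the two-sided bar complex computing the relative tensor product of $\modu_\cB A$ with the left-module category of $A'$ collapses, using the bar resolution of the unit bimodule, to the category $A\bimodu A'$ of bimodules in $\cB$ (equivalently, this follows from dualizability of $\modu_\cB A$ with dual the left $A$-module category, as worked out in the $\Rex/\LFP$ setting of \cite{Ben-Zvi2015}). Applying this with $A=\underline{\End}_\cB(m)$ and $A'=\underline{\End}_\cB(n)$ yields $\cM\bt_\cB\cN\simeq\underline{\End}_\cB(m)\bimodu\underline{\End}_\cB(n)$ in $\cB$.

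It then remains to transport this identification along the tensor equivalence $\widetilde{F^R}\colon\cB\xrightarrow{\sim}(R\bimodu R,\ot_R)$ of Theorem~\ref{reconstruction}(3). Since $\widetilde{F^R}$ is monoidal, it sends the bimodule category of the previous step to the category of bimodules, for the algebras $(\underline{\End}_\cA(m),\rho_m)$ and $(\underline{\End}_\cA(n),\rho_n)$, internal to $(R\bimodu R,\ot_R)$; unwinding this description, the unit axiom for a module over $\ot_R$ forces the ambient left (resp.\ right) $R$-action on such a bimodule $X$ to coincide with the restriction along $\rho_m$ (resp.\ $\rho_n$) of its $\underline{\End}_\cA(m)$- (resp.\ $\underline{\End}_\cA(n)$-)action, so that $X$ is exactly an $\underline{\End}(m)$-$\underline{\End}(n)$-bimodule object of $R\modu_\cA=\underline{\End}(\mathbf{1}_\cB)\modu_\cA$. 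The main obstacle -- the only genuinely delicate point -- is this last unwinding: one must check that the ``outer'' relative tensor product over $R$ making $\cB$ monoidal and the ``inner'' bimodule structure over $\underline{\End}(m),\underline{\End}(n)$ interact so that no spurious additional $\ot_R$ appears and the ambient category really is $R\modu_\cA$; as in the proof of Theorem~\ref{reconstruction}, rigidity of $\cA$ is what makes this go through, ensuring that $F^R$ is an $\cA$-bimodule functor and that the relevant bar complexes commute with the tensor products.
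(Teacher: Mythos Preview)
Your approach matches the paper's in substance: both rest on Theorem~\ref{inducedactions} together with the general bimodule description of relative tensor products from~\cite{Ben-Zvi2015} (the paper's one-line proof just cites Theorem~4.9 there and Proposition~\ref{inducedactions}). You have simply unpacked the transport step more explicitly.

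One point to clean up: you describe $\widetilde{F^R}\colon\cB\to(R\text{-bimod-}R,\ot_R)$ as a tensor \emph{equivalence}, but Theorem~\ref{reconstruction} only gives it as a \emph{faithful} tensor functor; $\cB$ is not equivalent to the full category of $R$-bimodules. What is true (and what the paper uses implicitly) is that $\cB\simeq R\modu_\cA$ as tensor categories, where the right $R$-action on a left $R$-module is the canonical one determined by~\eqref{eqn-exchanges} and the tensor product is $\ot_R$. With that corrected target, your final ``unwinding'' becomes immediate rather than delicate: Proposition~\ref{momentmaps} and Theorem~\ref{inducedactions}(3) identify the algebra objects $\underline{\End}_\cB(m)$, $\underline{\End}_\cB(n)$ in $\cB$ with $\underline{\End}_\cA(m)$, $\underline{\End}_\cA(n)$ (equipped with their moment maps) in $R\modu_\cA$, so bimodules in $\cB$ become bimodules in $R\modu_\cA$ on the nose, with no extra $\ot_R$ to worry about.
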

\begin{proof} This is just an application of Theorem~4.9 from~\cite{Ben-Zvi2015}, and Proposition \ref{inducedactions} above.\end{proof}

\subsection{Reconstruction for the annulus category}\label{annulus-category-section}

In this section, we apply the reconstruction techniques of the preceding section to the setting of factorization homology for braided module categories, to give a proof of Theorem \ref{quantum-moment-map-theorem}. To begin with, we consider the inclusion $j_0: D^2\to Ann$, given by including a small disk $D^2$ into an annulus along some small band, as depicted in Figure \ref{annulus-tensor-functor}.

\begin{figure}[h]\label{annulus-tensor-functor}
\includegraphics[height=2in]{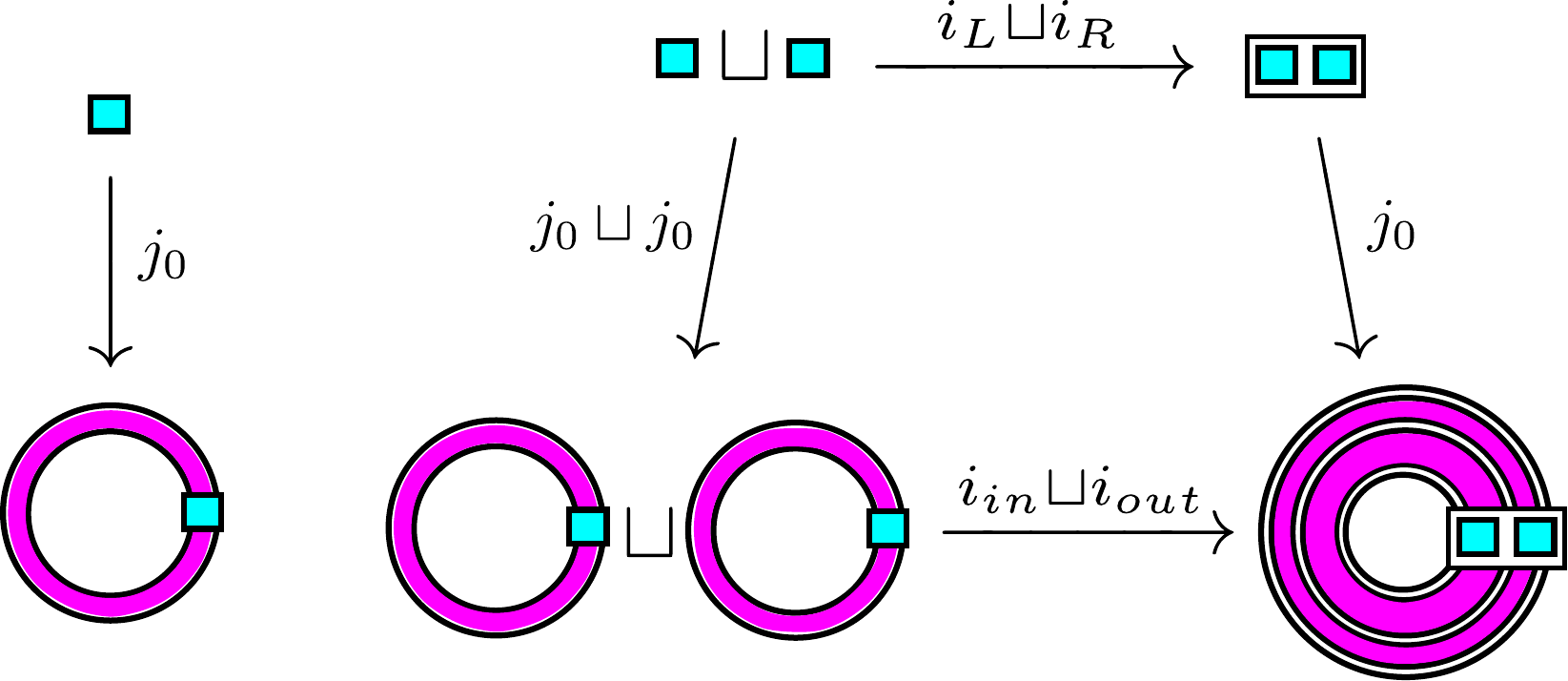}
\caption{Left: the tensor functor $F_{bd}:\cA\to\int_{Ann}\cA$ is induced by an inclusion of a disc into a small radial band in the annulus.  Right: the tensor structure induced by a commutative diagram, up to isotopy.}
\end{figure}

\begin{definition}\label{annulus-tensor-functor-def} The ``band" tensor functor $F_{bd}:\cA\to\int_{Ann}\cA$ is the functor $(j_0)_*$, induced by functoriality of factorization homology, with tensor structure induced by the commuting-up-to-isotopy diagram of inclusions, depicted in Figure \ref{annulus-tensor-functor}.
\end{definition}

With all of this framework in place, we turn to the proof of Theorem \ref{quantum-moment-map-theorem}.

\begin{proof}[Proof (of Theorem \ref{quantum-moment-map-theorem})]
We shall apply the results of Section \ref{reconstruction-section} to the special case that $\cA$ is a braided tensor category in $\Rex$, and
$$\cD=\int_{Ann}\cA\simeq \oa\modu_\cA, \qquad F=F_{bd}.$$

It follows from Theorem 4.16 of \cite{Ben-Zvi2015} that $\cO_\cA$ is a pro-generator for the $\cA$-action, and that $F_{bd}\cong \act_{\cO_\cA}$.  Hence, by Theorem~\ref{reconstruction}, $F_{bd}$ is naturally isomorphic to the free module functor $V\mapsto \oa\ot V$, and $F_{bd}^R:\int_{Ann}\cA \to \cA$ identifies simply with the forgetful functor from $\oa\modu_\cA\to\cA$.

For (1), the quantum moment map is that constructed in Proposition~\ref{momentmaps}.  For (2), Theorem~4.5 from \cite{Ben-Zvi2015}, combined with Proposition~\ref{momentmaps}, give equivalences:
$$\cM \simeq \underline{\End}_{\cA}(M)\modu_{\cA} \simeq \underline{\End}_{\cA}(M)\modu_{\int_{Ann}\cA},$$
where the latter is equipped with the algebra structure coming from the quantum moment map.  Finally, (3) follows the same proof as Part (3) of Proposition \ref{reconstruction}.
\end{proof}

Unpacking the isomorphism of Theorems \ref{reconstruction} and \ref{inducedactions} in this case, we have the following corollaries:

\begin{corollary}
Any left $\oa$-module has a canonical right module structure. The left and right action are related using the ``field goal'' transform $\tau_m:\oa\ot m  \rightarrow m\ot \oa$:
\[
	\begin{tikzpicture}
\braid[
 line width=2pt,
 style strands={2,3}{blue},
] (braid2) at (0,0) s_1^{-1}s_2 ;
\draw (braid2-1-e) node[below]{$m$};
\draw (braid2-1-s) node[above]{$m$};
\draw (braid2-3-e) node[below left]{$\oa$};
\draw (braid2-3-s) node[above left]{$\oa$};
\draw (braid2) node[left=1.5cm]{$\tau_m=$};
\end{tikzpicture}.
\]
For any $M, N\in \int_{Ann} \cA\simeq \oa\modu_\cA$, the pointwise tensor product $M\odot N$ is given by the relative tensor product,
$$M\odot N \cong M\underset{\oa}{\ot} N:= \operatorname{colim} \left(M\ot\oa\ot N \underset{\operatorname{act}_L}{\overset{\operatorname{act}_R}{\rightrightarrows}} M\ot N\right),$$
where $M$ is made into a right $\oa$-module by the field goal transform.
\end{corollary}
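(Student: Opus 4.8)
The plan is to specialize Theorems~\ref{reconstruction} and~\ref{inducedactions} to precisely the situation used in the proof of Theorem~\ref{quantum-moment-map-theorem}, and then to match the resulting abstract intertwiner with the field goal braid by a local diagrammatic computation. So take $\cB=\int_{Ann}\cA\simeq\oa\modu_\cA$ and $F=F_{bd}$; recall from that proof that $\cO_\cA$ is an $\cA$-progenerator for $\int_{Ann}\cA$, that $\underline{\End}(\mathbf{1}_\cB)=F_{bd}^R(\mathbf{1}_\cB)\cong\oa$, that $F_{bd}$ is the free module functor $V\mapsto\oa\ot V$, and that $F_{bd}^R$ is the forgetful functor. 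With these identifications, Theorem~\ref{reconstruction}(2) says exactly that every object $b$ of $\int_{Ann}\cA$ --- that is, every left $\oa$-module --- acquires on its underlying object $F_{bd}^R(b)$ a canonical right $\oa$-module structure, with left and right actions interchanged by the composite isomorphism
\[
\oa\ot F_{bd}^R(b)\;\cong\;F_{bd}^R(\mathbf{1}_\cB\odot b)\;\cong\;F_{bd}^R(b\odot\mathbf{1}_\cB)\;\cong\;F_{bd}^R(b)\ot\oa
\]
of~\eqref{eqn-exchanges}, the middle arrow being induced by unitality of $\odot$. Likewise Theorem~\ref{reconstruction}(3), applied to the tensor product on $\cB$ --- which under $F_{bd}$ is exactly the stacking/pointwise product $\odot$ of Figure~\ref{stacking-picture} --- identifies $M\odot N$ with the relative tensor product $M\underset{\oa}{\ot}N$ over the bimodule structures just described, presented by the stated coequalizer; this is also Theorem~\ref{inducedactions}(1) for the regular action of $\cB$ on itself.

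It remains only to evaluate the composite isomorphism above and observe that it is the field goal transform $\tau_m$ drawn in the statement. Here $\mathbf{1}_\cB\odot b$ and $b\odot\mathbf{1}_\cB$ are the results of stacking the small ``band disc'' annulus below, respectively above, the annulus carrying $b$, and the isomorphism between their images under $F_{bd}^R$ is induced by the isotopy sliding the band disc from the bottom of the annulus, around its core, up to the top. Because the annulus closes up on itself, this isotopy necessarily drags the band past the region supporting $b$, and in particular past the two strands representing the object $\oa$ inside $F_{bd}^R(b)$; tracking which strands cross over and which cross under yields precisely the braid $\sigma_1^{-1}\sigma_2$ of the picture. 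This is the same local move as the identification $\int_Y\cA\simeq{}_\cA\cA_{(\id,\sigma^2)}$ of Section~\ref{sec:BraidedModule}, now carried out in a neighbourhood of the band rather than globally on the cut annulus $Y$.

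Assembling the pieces: for $M,N\in\int_{Ann}\cA$ the tensor equivalence $\widetilde{F_{bd}^R}$ of Theorem~\ref{reconstruction} sends $M\odot N$ to $\widetilde{F_{bd}^R}(M)\underset{\oa}{\ot}\widetilde{F_{bd}^R}(N)$, and since $F_{bd}^R$ is the forgetful functor this is nothing but $M\underset{\oa}{\ot}N$ with $M$ made into a right $\oa$-module through $\tau_M$, and with colimit presentation exactly the definition of the relative tensor product. The only genuine work is the diagrammatic identification in the second paragraph, which I expect to be the main obstacle; everything else is a direct unwinding of the cited reconstruction theorems.
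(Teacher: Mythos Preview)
Your proposal is correct and follows exactly the approach the paper indicates: the paper's entire justification for this corollary is the sentence ``Unpacking the isomorphism of Theorems~\ref{reconstruction} and~\ref{inducedactions} in this case, we have the following corollaries,'' and you carry out precisely that unpacking, specializing to $\cB=\int_{Ann}\cA$ and $F=F_{bd}$ as in the proof of Theorem~\ref{quantum-moment-map-theorem}. Your additional isotopy argument identifying the exchange isomorphism~\eqref{eqn-exchanges} with the field goal braid (and your pointer to the twisted bimodule $_\cA\cA_{(\id,\sigma^2)}$ of Section~\ref{sec:BraidedModule}) supplies detail the paper leaves implicit.
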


\begin{corollary}
We have an equivalence of categories,
$$\left\{\textrm{Algebras in $\int_{Ann}\!\!\!\!\cA$}\right\} \simeq \left\{\begin{array}{l}\textrm{Algebras in $\cA$, equipped with an}\\\textrm{algebra homomorphism from $\oa$}\end{array}\right\}.$$
Moreover, given a module category $\cM=A\modu_\cA$, for an algebra $A\in\cA$ equipped with an algebra morphism $\rho:\oa\rightarrow A$, the action of $\int_{Ann}\cA$ on $\cM$ is given by:
\begin{align*}
	A\modu_\cA \bt	\oa\modu_\cA & \longrightarrow  A\modu_\cA\\
	V\bt M &\longmapsto V \ot_{\oa} M
\end{align*}
where $\oa$-acts on $V$ via $\rho$.
\end{corollary}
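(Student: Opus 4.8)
The plan is to read this corollary off directly from Proposition~\ref{momentmaps} and Theorem~\ref{inducedactions}, applied to the data $\cB=\int_{Ann}\cA$ and $F=F_{bd}$ already assembled in the proof of Theorem~\ref{quantum-moment-map-theorem}. Recall from there that $\cO_\cA$ is an $\cA$-progenerator for $\int_{Ann}\cA$ (Theorem~4.16 of~\cite{Ben-Zvi2015}), that $F_{bd}\cong\act_{\cO_\cA}$ is the free module functor $V\mapsto\oa\ot V$, and that $F_{bd}^R$ is the forgetful functor $\oa\modu_\cA\to\cA$, so that $\underline{\End}(\mathbf{1}_{\int_{Ann}\cA})\cong F_{bd}^R(\mathbf{1}_{\int_{Ann}\cA})\cong\oa$. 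In particular $\mathbf{1}_{\int_{Ann}\cA}$ is both a left and a right $\cA$-progenerator, so the hypotheses of Section~\ref{reconstruction-section} are met and both cited results apply.

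First I would invoke Proposition~\ref{momentmaps} verbatim with $\cB=\int_{Ann}\cA$: this yields the asserted equivalence between algebras in $\int_{Ann}\cA$ and algebras $A$ in $\cA$ equipped with an algebra homomorphism $\rho\colon\oa\to A$, the homomorphism being the image of the unit $\mathbf{1}_\cA\to A$ under the lax monoidal structure on $F_{bd}^R$ (in the forward direction), and the extra structure being repackaged as an algebra object in $\oa\modu_\cA$ (in the reverse direction). Since these two assignments were shown to be mutually inverse in Proposition~\ref{momentmaps}, nothing further is needed for the first assertion.

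Second, for the module-category statement, I would take $\cM=A\modu_\cA$ for such a pair $(A,\rho)$. By Proposition~\ref{momentmaps} (equivalently by the bimodule refinement in Theorem~\ref{reconstruction}), $\cM$ is canonically a module category over $\int_{Ann}\cA\simeq\oa\modu_\cA$, and the regular module $A\in\cM$ is an $\cA$-progenerator. Applying Theorem~\ref{inducedactions}(1) with $M=A$ identifies the action of $V\in\int_{Ann}\cA$ on $N\in\cM$ with the relative tensor product $V\ot_{\oa}N$ over $\underline{\End}(\mathbf{1}_{\int_{Ann}\cA})=\oa$, where $N$ is given the right $\oa$-module structure transported from its left one. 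This is precisely the claimed formula $V\bt M\mapsto V\ot_{\oa}M$ with $\oa$ acting on $V$ through $\rho$.

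The one point that deserves genuine care — and where I expect the main (minor) obstacle to lie — is matching the two bracketings: the exchange of left and right $\oa$-actions used in Theorem~\ref{inducedactions} is the abstract one of~\eqref{eqn-exchanges}, derived from the adjunction $(F_{bd},F_{bd}^R)$, whereas the concrete description of $\int_{Ann}\cA$ via excision identifies this exchange with the field-goal transform $\tau_m$ (equivalently, with the double braiding on the geometric bimodule ${}_\cA\cA_{(\id,\sigma^2)}$ of Section~\ref{sec:BraidedModule}). Verifying that these coincide is a compatibility between the monadic reconstruction and the excision computation of the annulus category; once it is in place, the corollary follows by direct quotation of the results above.
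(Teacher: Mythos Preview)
Your proposal is correct and is essentially the paper's own approach: the corollary is simply the specialization of Proposition~\ref{momentmaps} and Theorem~\ref{inducedactions} to $\cB=\int_{Ann}\cA$, $F=F_{bd}$, using the identifications $\underline{\End}(\mathbf{1}_\cB)\cong\oa$ already established in the proof of Theorem~\ref{quantum-moment-map-theorem}. The compatibility you flag between the abstract left/right exchange~\eqref{eqn-exchanges} and the field-goal transform is exactly the content of the corollary immediately preceding this one in the paper, so in the paper's logical order that point is already in hand when this statement is read off.
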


\begin{corollary}\label{annulus-corollary}
	Let $\cM$ and $\cN$ be a left and a right module category over $\int_{Ann}\cA$ with progenerators $M$ and $N$, respectively. Then there is an equivalence of categories:
	\[
		\cM \underset{\scalebox{0.75}{$\displaystyle{\int_{Ann}\!\!\!\!\cA}$}}{\boxtimes} \cN \simeq (\underline{\End}(M)-\underline{\End}(N))\operatorname{-bimod}_{\oa\modu_\cA}.
	\]
\end{corollary}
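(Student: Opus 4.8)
The plan is to obtain Corollary~\ref{annulus-corollary} as a direct specialization of the general gluing statement Corollary~\ref{cor:bimoduleGluing}, applied to the band tensor functor $F_{bd}\colon\cA\to\int_{Ann}\cA$ of Definition~\ref{annulus-tensor-functor-def}. In other words, almost nothing new is needed: once the hypotheses of the reconstruction setup are checked for the pair $(\cA,\int_{Ann}\cA)$, the corollary is read off from Corollary~\ref{cor:bimoduleGluing} by substituting $\cB=\int_{Ann}\cA$ and $\underline{\End}(\un_\cB)=\oa$.

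First I would verify that $(\cA,\int_{Ann}\cA,F_{bd})$ satisfies the hypotheses of Proposition~\ref{momentmaps} (equivalently, of the setup preceding Theorem~\ref{reconstruction}). We work under the standing assumption that $\cA$ is a rigid braided tensor category in $\Rex$ (or $\LFP$). As already invoked in the proof of Theorem~\ref{quantum-moment-map-theorem}, Theorem~4.16 of \cite{Ben-Zvi2015} shows that the quantum structure sheaf $\cO_\cA=\un_{\int_{Ann}\cA}$ is a pro-generator for the $\cA$-action and that $F_{bd}\cong\act_{\cO_\cA}$; hence $F_{bd}$ is dominant, $\un_{\int_{Ann}\cA}$ is simultaneously a left and a right $\cA$-pro-generator, and $\underline{\End}(\un_{\int_{Ann}\cA})\cong F_{bd}^R(\un_{\int_{Ann}\cA})\cong\oa$ as algebra objects of $\cA$. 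This places us exactly in the situation of Corollary~\ref{cor:bimoduleGluing} with $\cB=\int_{Ann}\cA$.

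Next I would reconcile the two notions of pro-generator appearing in the two statements: Corollary~\ref{annulus-corollary} asks that $M$ and $N$ be pro-generators for the $\int_{Ann}\cA$-action, whereas Corollary~\ref{cor:bimoduleGluing} requires $\cA$-pro-generators. These coincide: applying Theorem~\ref{inducedactions}(2) to $\cB=\int_{Ann}\cA$ (together with its converse, since the induced $\cA$-action on $\cM$ is restriction along the conservative, cocontinuous functor $F_{bd}$, and conservative cocontinuous functors reflect these properties) shows that $M$ is an $\cA$-pro-generator for $\cM$ if and only if it is an $\int_{Ann}\cA$-pro-generator, and likewise for $N\in\cN$. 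Feeding this into Corollary~\ref{cor:bimoduleGluing} and substituting $\underline{\End}(\un_\cB)=\oa$ yields precisely
\[
\cM\boxtimes_{\int_{Ann}\cA}\cN \;\simeq\; (\underline{\End}(M)-\underline{\End}(N))\operatorname{-bimod}_{\oa\modu_\cA},
\]
as claimed. I do not expect any genuine obstacle here: the mathematical content is entirely contained in Corollary~\ref{cor:bimoduleGluing} and the identification $\int_{Ann}\cA\simeq\oa\modu_\cA$. The only points requiring care are bookkeeping — checking that the $\cA$-bimodule structure on $\int_{Ann}\cA$ used implicitly in Corollary~\ref{cor:bimoduleGluing} is the twisted one ${}_\cA\cA_{(\id,\sigma^2)}$ identified earlier, that the monoidal structure on $\int_{Ann}\cA$ is throughout the stacking product $T_{St}$ (the one governing $E_2$-modules), and that the left/right module conventions match between the two statements — all of which are already fixed by the cited results.
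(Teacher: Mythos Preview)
Your proposal is correct and follows exactly the approach the paper takes: the corollary is stated in the paper without an explicit proof because it is a direct specialization of Corollary~\ref{cor:bimoduleGluing} to the case $\cB=\int_{Ann}\cA$, $F=F_{bd}$, with $\underline{\End}(\mathbf{1}_\cB)\cong\oa$, once the setup of the preceding paragraphs (citing Theorem~4.16 of \cite{Ben-Zvi2015}) is in place. Your extra care in reconciling the two notions of pro-generator via Theorem~\ref{inducedactions}(2) is appropriate and is implicitly part of the paper's argument as well, since Proposition~\ref{inducedactions} is already invoked in the proof of Corollary~\ref{cor:bimoduleGluing}.
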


\subsection{Braided module structure}
	It follows from the above that if $B$ is an algebra in $\cA$, every algebra morphism $\rho:\oa\to B$ turns $B\modu$ into a module category over $\int_{Ann}\cA$. Hence $\rho$ should correspond to a braided module structure on the category $B\modu_\cA$ in the sense of Definition~\ref{def:BraidedModule}. In this section we make this structure explicit; the construction which follows can be interpreted as a generalisation of~\cite{Donin2003a}.

Let $B$ be an algebra in $\cA$ and $\Gamma$ an automorphism of the action functor
\[
	B\modu_{\cA} \times \cA \rightarrow B\modu_{\cA}.
\]
We begin by constructing a morphism of underlying objects,
\[
\rho_\Gamma:\oa \longrightarrow B,
\]
as follows.  Using the definition of $\oa$ as a co-end, it suffices to define $\rho_\Gamma$ compatibly on each $V^*\ot V$, as below:
\[
	\rho_{\Gamma|_{V^*\ot V}}:  V^* \ot V \xrightarrow{1_B\ot \id^{\ot 2}} B\ot V^* \ot V\xrightarrow{\Gamma_{B,V^*}\ot \id} B \ot V^* \ot V \xrightarrow{\id \ot \ev} B
\]
It is easily checked that this system of maps descends to $\oa$.
\begin{theorem}\label{thm:DM}
	The morphism $\rho_\Gamma$ is an algebra homomorphism if and only if $\Gamma$ satisfies equation~\eqref{eq:DM}:
\begin{align*}
	\Gamma_{M,V\ot W}=\sigma_{V,W}^{-1} \Gamma_{M,W} \sigma_{V,W} \Gamma_{M,V}
\end{align*}
\end{theorem}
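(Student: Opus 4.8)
The strategy is to unwind the definitions on both sides and reduce the assertion to a diagrammatic identity in $\cA$, which is then verified by a direct but careful computation with the braiding, the evaluation/coevaluation maps, and the coend presentation of $\oa$. The key structural input is that $\oa$ is built as the coend $\int^{V\in\cA} V^*\ot V$, so that a morphism out of $\oa$ is the same as a dinatural family indexed by $V$, and the algebra structure on $\oa$ (the reflection equation algebra) is itself induced by an explicit dinatural formula combining two copies $V^*\ot V$ and $W^*\ot W$ into $(V\ot W)^*\ot(V\ot W)$ using the braiding. Thus "$\rho_\Gamma$ is an algebra homomorphism" becomes a statement comparing two dinatural families $(V\ot W)^*\ot (V\ot W)\to B$: one obtained by applying $\rho_\Gamma$ to the product in $\oa$, the other obtained by multiplying $\rho_{\Gamma|_{V^*\ot V}}$ and $\rho_{\Gamma|_{W^*\ot W}}$ in $B$.

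\medskip

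First I would record, in string-diagram form, the multiplication on $\oa$: the product of the $V$-component and the $W$-component lands in the $(V\ot W)$-component, and the formula involves precisely a double-braiding-type crossing of the $W$-strand past the $V^*\ot V$ block (this is the reflection-equation multiplication; it is exactly the structure for which the field-goal/half-braiding conventions of Figure~\ref{annulus-tensor-functor} and the twisted bimodule $\,_\cA\cA_{(id,\sigma^2)}$ were set up). Second, I would write out $\rho_\Gamma$ on the $(V\ot W)$-component: one inserts the unit $1_B$, applies $\Gamma_{B,(V\ot W)^*}$, and contracts with $\ev_{V\ot W}$, and one uses the canonical identification $(V\ot W)^*\cong W^*\ot V^*$ together with the compatibility of $\ev_{V\ot W}$ with $\ev_V,\ev_W$ and the braiding. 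Third, I would compute the product in $B$ of $\rho_{\Gamma|_{V^*\ot V}}$ and $\rho_{\Gamma|_{W^*\ot W}}$, using that $B$ is an algebra in $\cA$ and that $\Gamma$ is an automorphism of the action \emph{bifunctor} on $B\modu_\cA$ — in particular $\Gamma$ is natural in the $B$-module slot, so $\Gamma_{B,-}$ interacts with left multiplication by elements of $B$ and with the evaluation contractions in a controlled way. Equating the two expressions, all contributions that come from $B$'s multiplication, the unit, and the evaluations match formally; what remains is an identity purely among the $\Gamma$'s and the braiding isomorphisms, and one sees that it holds if and only if
\[
\Gamma_{M,V\ot W}=\sigma_{V,W}^{-1}\,\Gamma_{M,W}\,\sigma_{V,W}\,\Gamma_{M,V},
\]
i.e.\ exactly equation~\eqref{eq:DM}. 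The "only if" direction is then obtained by choosing $B$ large enough — e.g.\ $B=\oa$ itself with $\rho=\id$, or more simply by testing the dinatural identity on $M=B$ and using faithfulness of the coend presentation — so that the diagrammatic identity forced by homomorphism-ness can be read off as~\eqref{eq:DM} with no further hypotheses.

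\medskip

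The main obstacle I anticipate is bookkeeping rather than conceptual: getting the braidings in the multiplication on $\oa$ to line up with the braidings produced when one commutes $\Gamma_{B,V^*}$ and $\Gamma_{B,W^*}$ past each other and past the $B$-multiplication. The subtle point is that $\Gamma$ is only assumed natural and an automorphism of the bifunctor — it is \emph{not} assumed to satisfy~\eqref{eq:octagon} or the full braided-module axioms here — so one cannot use those relations to simplify; the computation must isolate exactly the combination of $\Gamma$'s that the associativity of $B$ and dinaturality of the coend cannot absorb, and that residue must be shown to be governed precisely by~\eqref{eq:DM}. Concretely, the place to be careful is the identification $(V\ot W)^*\cong W^*\ot V^*$ and the resulting reordering of the two $\Gamma$-insertions, which is where the $\sigma_{V,W}^{-1}(\,\cdot\,)\sigma_{V,W}$ conjugation in~\eqref{eq:DM} is born. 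Once that single reordering is handled correctly, both implications fall out together, since the argument is a chain of equivalences: $\rho_\Gamma$ multiplicative $\iff$ the two dinatural families agree $\iff$ \eqref{eq:DM} holds.
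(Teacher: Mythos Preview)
Your proposal is correct and follows essentially the same approach as the paper's own proof: both compare $m_B\circ(\rho_\Gamma\ot\rho_\Gamma)$ with $\rho_\Gamma\circ m_{\oa}$ on the coend components $V^*\ot V$ and $W^*\ot W$, use the $B$-linearity of $\Gamma_{B,-}$ to slide one $\Gamma$ past the multiplication (this is exactly your ``$\Gamma$ is natural in the $B$-module slot'' observation), and then read off the residual identity among $\Gamma$'s and braidings as~\eqref{eq:DM} after the $(V\ot W)^*\cong W^*\ot V^*$ relabeling. Your concern about the ``only if'' direction needing a special choice of $B$ is unnecessary --- as you yourself note in the final sentence, the whole argument is a chain of equivalences, and since $B$ generates $B\modu_\cA$ the identity for $M=B$ implies it for all $M$ by naturality.
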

\begin{proof}
Let $m_B$ denote the multiplication of $B$. On the one hand, we consider the composition, $m_B\circ (\rho_B \ot \rho_B)$:
\begin{multline}\label{eq:DMLHS}
	(V^* \ot V)\ot (W^* \ot W) \rightarrow (B\ot V^* \ot V)\ot (B\ot W^* \ot W)\\ \xrightarrow{\Gamma_{B,V^*}\ot \id \ot \Gamma_{B,W^*}\ot \id} (B\ot V^* \ot V)\ot (B\ot W^* \ot W)\rightarrow B\ot B\xrightarrow{m_B} B .
\end{multline}
On the other hand, we have the composition, $\rho_B\circ m_{\oa}$:
\begin{multline}\label{eq:DMRHS}
	(V^* \ot V)\ot (W^* \ot W) \xrightarrow{\sigma_{V^*\ot V,W^*}} (W^*\ot V^*)\ot V\ot W\\ \rightarrow B\ot (W^*\ot V^*)\ot V\ot W \xrightarrow{\Gamma_{B,W^*\ot V^*}\ot \id} B\ot (W^*\ot V^*)\ot V\ot W\rightarrow  B.
\end{multline}
In order to simplify the computation, we precompose each side by $\sigma_{V^* \ot V,W^*}^{-1}$. Then the homomorphism equation~\eqref{eq:DMLHS}=\eqref{eq:DMRHS} can be expressed as follows:
\[
	\tik{
		\node[label=above:$B$] at (.5,0.5) {};
		\straight{0.5}{-0.5}
		\ap{0}{0}
		\lmove{-1}{1}\ap{0}{1}\rmove{1}{1}\ap{3}{1}
		\coupon{-1}{2}{1}{$\Gamma$}\straight{1}{2}\coupon{2}{2}{1}{$\Gamma$} \straight{4}{2}
		\straight[-*]{-1}{3}\straight{0}{3}\straight{1}{3}\straight[-*]{2}{3}\straight{3}{3}\straight{4}{3}
		\straight{0}{4} \straight{1}{4} \lmove{2}{4}\lmove{3}{4}
		\straight{0}{5}\negative{1}{5}\straight{3}{5}
		\negative{0}{6}\straight{2}{6}\straight{3}{6}
		\node[label=below:$W^*$] at (0,-7) {};
		\node[label=below:$V^*$] at (1,-7) {};
		\node[label=below:$V$] at (2,-7) {};
		\node[label=below:$W$] at (3,-7) {};
	}
	\quad=\quad
	\tik{
		\node[label=above:$B$] at (0,0) {};
		\straight{0}{0} \ap{2}{0}
		\straight{0}{1} \lmove{1}{1}\ap{2}{1} \rmove{3}{1}
		\coupon{0}{2}{2}{$\Gamma$}\straight{3}{2}\straight{4}{2}
		\straight[-*]{0}{3}\straight{1}{3}\straight{2}{3}\straight{3}{3}\straight{4}{3}
		\node[label=below:$W^*$] at (1,-4) {};
		\node[label=below:$V^*$] at (2,-4) {};
		\node[label=below:$V$] at (3,-4) {};
		\node[label=below:$W$] at (4,-4) {};

	}
\]
Since $\Gamma$ is $B$-linear, $\Gamma_{B,-}$ commutes with the action of $b$ on itself by left multiplication. Thus, beginning with the LHS, we may slide the rightmost instance of $\Gamma$ over the multiplication, at which point the rightmost unit disappears.  We have:
\[
LHS	\quad=\quad
	\tik{
		\node[label=above:$B$] at (0,0) {};
		\straight{0}{0} \ap{1}{0}
		\coupon{0}{1}{1}{$\Gamma$} \straight{2}{1}
		\straight{0}{2}\rmove{1}{2}\rmove{2}{2}
		\straight{0}{3} \ap{1}{3}\rmove{2}{3}\rmove{3}{3}
		\coupon{0}{4}{1}{$\Gamma$}\negative{2}{4} \straight{4}{4}
		\straight[-*]{0}{5}\negative{1}{5} \straight{3}{5}\straight{4}{5}
		\node[label=below:$W^*$] at (1,-6) {};
		\node[label=below:$V^*$] at (2,-6) {};
		\node[label=below:$V$] at (3,-6) {};
		\node[label=below:$W$] at (4,-6) {};

	}
	\quad=\quad
	\tik{
		\node[label=above:$B$] at (0,0) {};
		\straight{0}{0} \ap{2}{0}
		\straight{0}{1} \lmove{1}{1}\ap{2}{1} \rmove{3}{1}
		\coupon{0}{2}{1}{$\Gamma$} \straight{2}{2}\straight{3}{2}\straight{4}{2}
		\straight{0}{3}\positive{1}{3}\straight{3}{3}\straight{4}{3}
		\coupon{0}{4}{1}{$\Gamma$} \straight{2}{4}\straight{3}{4}\straight{4}{4}
		\straight[-*]{0}{5}\negative{1}{5}\straight{3}{5}\straight{4}{5}
		\node[label=below:$W^*$] at (1,-6) {};
		\node[label=below:$V^*$] at (2,-6) {};
		\node[label=below:$V$] at (3,-6) {};
		\node[label=below:$W$] at (4,-6) {};

	}
\]
Clearly, this equals the RHS if, and only if, we have
\[
	\tik{
		\node[label=above:$B$] at (0,-1) {};
		\node[label=above:$W^*$] at (1,-1) {};
		\node[label=above:$V^*$] at (2,-1) {};		
		\straight{0}{1} \straight{1}{1} \straight{2}{1};
		\coupon{0}{2}{1}{$\Gamma$} \straight{2}{2}
		\straight{0}{3}\positive{1}{3}
		\coupon{0}{4}{1}{$\Gamma$} \straight{2}{4}
		\straight{0}{5}\negative{1}{5}
		\node[label=below:$B$] at (0,-6) {};
		\node[label=below:$W^*$] at (1,-6) {};
		\node[label=below:$V^*$] at (2,-6) {};
	}
	\quad =\quad
	\tik{
		\node[label=above:$B$] at (0,0) {};
		\node[label=above:$W^*$] at (1,0) {};
		\node[label=above:$V^*$] at (2,0) {};
		\straight{0}{0}\straight{1}{0}\straight{2}{0}
		\coupon{0}{1}{2}{$\Gamma$}
		\straight{0}{2}\straight{1}{2}\straight{2}{2}
		\node[label=below:$B$] at (0,-3) {};
		\node[label=below:$W^*$] at (1,-3) {};
		\node[label=below:$V^*$] at (2,-3) {};
		
	}
\]
Conjugating the above by $\sigma_{W^*,V^*}$, and replacing $V^*,W^*$ with $V, W$ gives equation \eqref{eq:DM}. Therefore, compositions~\eqref{eq:DMLHS} and~\eqref{eq:DMRHS} coincide if and only if~\eqref{eq:DM} holds.
\end{proof}

We will apply Theorem~\ref{thm:DM} in the following particular case: by definition of $\oa$, there are also canonical maps
\[
V\ot		{}^*V \longrightarrow \oa
\]
where ${}^*V$ is the \emph{right} dual of $V$. For $M\in \oa\modu_{\cA}$ and $V \in \cA$, let $L_\cA \in Aut(M\ot V)$ be the operator defined by
\begin{multline*}
	M\ot V \xrightarrow{\id^{\ot 2}\ot coev_R} M\ot V \ot {}^*V\ot V \xrightarrow{\sigma_{M,V\ot {}^*V} \ot \id} V\ot {}^*V \ot M \ot V \\ \longrightarrow \oa \ot M\ot V \longrightarrow M \ot V
\end{multline*}

where the last map is the action of $\oa$ on $M$.
\begin{proposition}
	Under the identification used in Theorem~\ref{thm:DM}, the map from $\oa$ to itself induced by $L_{\cA}$ is the identity. In particular this is an algebra morphism, hence $L_{\cA}$ satisfies equation~\eqref{eq:DM}.
\end{proposition}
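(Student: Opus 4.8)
The plan is to identify the map $\oa\to\oa$ in question explicitly and show it is $\id_\oa$. Regard $L_\cA$ as an automorphism $\Gamma$ of the action bifunctor on $\oa\modu_\cA\times\cA$, taking $B=\oa$ to be the regular module, and let $\rho_{L_\cA}\colon\oa\to\oa$ be the morphism associated to it by the construction preceding Theorem~\ref{thm:DM}. Writing $\oa=\int^{V}V^*\ot V$ with structure maps $j_V\colon V^*\ot V\to\oa$, the map $\rho_{L_\cA}$ is specified by prescribing it compatibly on each wedge summand $V^*\ot V$; so it suffices to show $\rho_{L_\cA}\circ j_V=j_V$ for every $V$, for then the uniqueness clause in the universal property of the coend forces $\rho_{L_\cA}=\id_\oa$. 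Since $\id_\oa$ is visibly an algebra homomorphism, Theorem~\ref{thm:DM} then yields that $L_\cA$ satisfies equation~\eqref{eq:DM}, which is the remaining assertion.

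First I would unwind $\rho_{L_\cA}$ restricted to $V^*\ot V$: by definition this is the composite $V^*\ot V\to\oa\ot V^*\ot V\xrightarrow{L_\cA\ot\id_V}\oa\ot V^*\ot V\xrightarrow{\id\ot\ev_V}\oa$, where the first arrow inserts the algebra unit $1_\oa$ and $L_\cA$ is evaluated at the regular module $\oa$ and the object $V^*$. Expanding this inner $L_\cA$, one meets in turn: the coevaluation $\widetilde{\coev}_{V^*}\colon\un\to{}^*(V^*)\ot V^*$, which under the canonical identification ${}^*(V^*)\cong V$ (no pivotal structure needed) is $\coev_V$; the braiding moving the $\oa$-strand past $V^*\ot V$; the canonical map $V^*\ot{}^*(V^*)\to\oa$, which under the same identification is $j_V$; and finally the action of $\oa$ on the regular module, i.e.\ left multiplication $m_\oa$.

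Then I would apply three routine simplifications. (i) Since $1_\oa$ factors through the monoidal unit and $\sigma_{\un,-}=\id$, the braiding of the $\oa$-strand is trivial here, so the $1_\oa$ simply slides to the right untouched. (ii) Left multiplication by $1_\oa$ is the identity on $\oa$, so $m_\oa$ disappears. (iii) The coevaluation $\coev_V$ produced inside $L_\cA$ is paired, via the outer $\ev_V$, against the original input $V$-strand, and the resulting zig-zag collapses by the triangle identity. What survives is exactly $j_V\colon V^*\ot V\to\oa$, as required; in string-diagram form all three cancellations are immediate. The only point demanding care is the bookkeeping of the two families of canonical maps into the coend — the structure maps $V^*\ot V\to\oa$ versus the maps $V\ot{}^*V\to\oa$ used in the definition of $L_\cA$ — together with the canonical identifications of mixed iterated duals (${}^*(V^*)\cong V$ and $({}^*V)^*\cong V$); this, plus keeping the orientation of the single braiding straight (harmless, since it acts on the unit strand), is the main, and rather mild, obstacle.
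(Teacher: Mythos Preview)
Your proposal is correct and follows essentially the same approach as the paper: both unwind the definition of $\rho_{L_\cA}$ on a generic wedge $V^*\otimes V$ and reduce it to the coend structure map $\iota_V$ (your $j_V$) by the same three simplifications---naturality of the braiding against the unit map $1_{\oa}$, the unit axiom for multiplication, and a triangle identity. The paper compresses this into a single string-diagram equality, whereas you spell out the reductions in prose; the content is identical.
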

\begin{proof}
We have:
\[
	(\rho_{O_A})_{|V^*\ot V}=	\tik{
		\straight{1.3}{-3.5}
		\begin{scope}[xscale=1.5,xshift=-0.67cm]
		\ap{1}{-3}
		\end{scope}
		\coupon{0}{-2}{1}{$\iota_{V}$} \straight{2}{-2}
		\straight{0}{-1}\straight{1}{-1}\straight{2}{-1}\ap{3}{-1}
		\straight{0}{0} \positive{1}{0}\straight{3}{0}\straight{4}{0}
		\positive{0}{1}\up{2}{1} \straight{4}{1}
		\straight[-*]{0}{2}\straight{1}{2}\straight{4}{2}
		\node[label=below:$\un_{\cA}$] at (0,-3) {};
		\node[label=below:$V^*$] at (1,-3) {};
		\node[label=below:$V$] at (4,-3) {};
	}=\iota_{V}
\]
\end{proof}
\begin{corollary}
Let $B$ be an algebra in $\cA$. If $\Gamma$ a braided module structure on $B\modu$, then $\rho_\Gamma$ is an algebra morphism $\oa\rightarrow B$. Conversely, if $\rho$ is an algebra morphism $\oa \rightarrow B$, then using $\rho$ $\oa$ acts on any $B$-modules $M$,  hence $L_\cA$ acts on $M \ot V$ for $V\in \cA$ and this turns $B\modu$ into a braided module category.
\end{corollary}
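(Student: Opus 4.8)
The plan is to treat the two implications separately, using Theorem~\ref{thm:DM} and the preceding Proposition as the two technical inputs. For the forward implication, suppose $\Gamma$ is a braided module structure on $B\modu_\cA$ in the sense of Definition~\ref{def:BraidedModule}, so that $\Gamma$ satisfies \eqref{eq:octagon} and \eqref{eq:DMcat}; by the Remark following Definition~\ref{def:BraidedModule} it then also satisfies the Donin--Kulish--Mudrov equation \eqref{eq:DM}. Since $\Gamma$ is by definition a natural automorphism of the action bifunctor $\ot:B\modu_\cA\times\cA\to B\modu_\cA$, its component at the regular module $B\in B\modu_\cA$ is an automorphism of $B\ot(-)$ in the category of $B$-modules, and in particular commutes with left multiplication by elements of $B$ on itself --- which is exactly the $B$-linearity hypothesis exploited in the proof of Theorem~\ref{thm:DM}. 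Hence Theorem~\ref{thm:DM} applies to $\Gamma$ and shows that $\rho_\Gamma:\oa\to B$ is an algebra homomorphism.

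For the converse, let $\rho:\oa\to B$ be an algebra homomorphism. First I would check that $L_\cA$ is a well-defined natural \emph{automorphism} of $(-)\ot V:B\modu_\cA\to B\modu_\cA$ for each $V\in\cA$: it is assembled from the $\oa$-action on $B$-modules obtained by restriction along $\rho$, from the (co)evaluation morphisms of $\cA$, and from the braiding, all of which are natural, and its inverse can be produced from the opposite braiding together with the left coevaluation (equivalently, by Theorem~\ref{quantum-moment-map-theorem} the category $B\modu_\cA$ is an $\int_{Ann}\cA$-module category and $L_\cA$ is the image of the automorphism $\nu$ of the band functor $F_{bd}$ from Proposition~\ref{prop:AnnulusAction} under the corresponding tensor functor $\int_{Ann}\cA\to\End(B\modu_\cA)$, and automorphisms push forward to automorphisms). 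It then remains to check that $(B\modu_\cA,L_\cA)$ satisfies the axioms \eqref{eq:octagon} and \eqref{eq:DM} of a braided module category. Equation \eqref{eq:DM} I would deduce from Theorem~\ref{thm:DM} and the preceding Proposition: in the universal case $B=\oa$ with $M$ the regular module, that Proposition identifies the self-map of $\oa$ induced by $L_\cA$ with the identity, an algebra morphism, so \eqref{eq:DM} holds there; and since $L_\cA$ on a general $B$-module depends on it only through its underlying $\oa$-module structure, \eqref{eq:DM} descends along $\rho$ to all of $B\modu_\cA$. Equation \eqref{eq:octagon} I would verify by a direct diagram chase --- expanding both sides for $\Gamma=L_\cA$, the coevaluation/evaluation pairs cancel and the braidings slide past the $\oa$-action by naturality of $\sigma$, leaving the same morphism on both sides --- or, equivalently, by recognizing \eqref{eq:octagon} as the image under $\int_{Ann}\cA\to\End(B\modu_\cA)$ of equation \eqref{eq:AnnAction1} of Proposition~\ref{prop:AnnulusAction}.

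The conceptual content is light, since essentially everything is supplied by Section~\ref{reconstruction-section}, by Theorem~\ref{thm:DM}, and by the preceding Proposition. The main obstacle is bookkeeping: carefully matching the explicit operator $L_\cA$ with the braided module structure produced abstractly by Theorem~\ref{thm:e2mod}, i.e., verifying that pushing $\nu$ forward along the relative-tensor-product action reproduces precisely the composite written in the definition of $L_\cA$ --- and not a braided conjugate of it. The preceding Proposition is exactly the computation that pins this down; once it is in place, the remaining verifications of \eqref{eq:octagon} and \eqref{eq:DM} are short diagram chases of the kind already carried out in the proof of Theorem~\ref{thm:DM}.
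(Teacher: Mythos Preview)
Your proposal is correct and supplies exactly the argument the paper leaves implicit: the Corollary is stated without proof, and the forward direction is an immediate application of Theorem~\ref{thm:DM} (a braided module structure satisfies \eqref{eq:DM}, hence $\rho_\Gamma$ is an algebra map), while the converse is precisely what the preceding Proposition sets up (the induced $\rho_{L_\cA}$ is the identity on $\oa$, so $L_\cA$ satisfies \eqref{eq:DM}, and \eqref{eq:octagon} is a short naturality check). Your alternative route via Theorem~\ref{quantum-moment-map-theorem} and Theorem~\ref{thm:e2mod}---recognizing the braided module structure on $B\modu_\cA$ as the image of the annulus automorphism $\nu$ under the tensor functor $\int_{Ann}\cA\to\End(B\modu_\cA)$---is also correct and is in fact the cleanest way to get both axioms at once; it makes the ``descends along $\rho$'' step for \eqref{eq:DM} automatic rather than requiring the naturality-in-$M$ argument you sketch.
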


\section{Closed surfaces, markings and quantum Hamiltonian reduction}\label{sec:closed}
We now close up punctured surfaces to describe quantum character varieties for closed, possibly marked, surfaces.  For simplicity, we will begin with the unmarked situation.

Fixing an inclusion $D^2\to S$, and the corresponding boundary inclusions of the annulus into $S^\circ=S\setminus D^2$ and $D^2$ as boundary annuli, excision gives a canonical equivalence,
$$\int_{S} \cA\simeq \int_{S^\circ}\cA \underset{\scalebox{0.75}{$\displaystyle{\int_{Ann}\!\!\!\!\!\!\!\cA}$}}{\bt} \cA.$$
Here $\cA=\int_{D^2}\cA$ is regarded as a braided module category over itself.

We now proceed to compute these invariants explicitly via quantum Hamiltonian reduction.  Recall that the algebra $A_{S^\circ}$ is identified with $\act^R_{\cO_{S^\circ}}(\cO_{S^\circ})\in \cA$, where $\cA$ acts on $\int_{S^\circ}\cA$ via insertion at some interval in the boundary annulus of $\cA$.  We have $\cA$-module category equivalences,
$$\int_{S^\circ}\cA\simeq F_{bd}^*\int_{S^\circ}\cA,$$
where $F_{bd}:\cA\to\int_{Ann}\cA$ is the tensor functor from Definition \ref{annulus-tensor-functor-def}, induced by the chain of inclusions depicted in Figure \ref{nested-actions}.

\begin{figure}[h]
\includegraphics[height=1in]{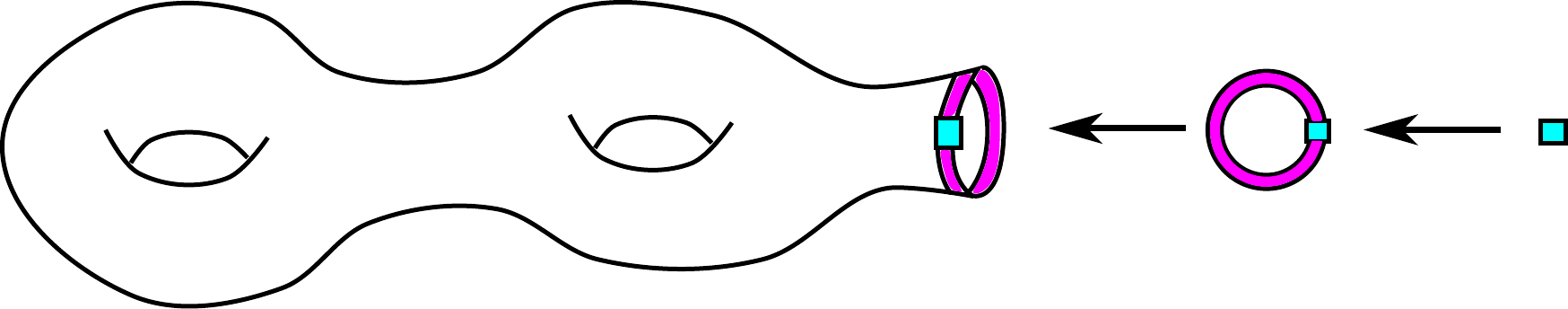}
\caption{The action of $\cA$ on $\int_{S^\circ}\cA$ is obtained by pulling back the $\int_{Ann}\!\cA$-action, through the tensor functor $F_{bd}:\cA\to\int_{Ann}\cA$.}\label{nested-actions}
\end{figure}

This means we are precisely in the situation of Section \ref{annulus-category-section}.  We have canonical quantum moment maps,
$$\mu: \oa\to A_{S^\circ},\qquad \epsilon:\oa\to \mathbf{1}_\cA,$$
which are $\cA$-algebra homomorphisms realizing the braided module structures on $$\int_{S^\circ}\cA\simeq A_{S^\circ}\modu_{\cA}, \qquad \int_{D^2}\cA \simeq \cA\simeq \mathbf{1}_\cA\modu_{\cA},$$
respectively, and moreover we have equivalences of $\int_{Ann}\cA$-module categories,
$$\int_{S^\circ}\cA \simeq A_{S^\circ}\modu_{\oa\modu_\cA}, \qquad \int_{D^2}\cA \simeq \mathbf{1}_\cA\modu_{\oa\modu_\cA}$$
between the factorization homology of $S^\circ$ (resp, $D^2$), as module categories for the annulus category by stacking, and the categories of $A_{S^\circ}$-modules in $\cA$ (resp, $\mathbf{1}_\cA$-modules in $\cA$), equipped in each case with compatible actions of $\oa$.

\begin{remark} The map $\mu$ is a generalization of the ``quantum moment maps" studied in~\cite{Jordan2014,Varagnolo2010}; it quantizes the monodromy map,
$$\uch_G(S\setminus D^2) \to \frac{G}{G}.$$
\end{remark}

As an application of Corollary \ref{annulus-corollary}, we obtain:

\begin{theorem} The category attached to a closed surface is given as $(A_{S^\circ},\mathbf{1}_\cA)$-bimodules in the annulus category,
$$\int_S\cA\simeq (A_{S^\circ}\bimodu 1_\cA)_{\oa\modu_\cA}.$$
\end{theorem}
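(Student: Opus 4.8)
The plan is to obtain the statement as an essentially immediate consequence of excision together with Corollary~\ref{annulus-corollary}, once the relevant progenerators and their internal endomorphism algebras have been pinned down. First I would invoke the excision equivalence recalled just above the statement,
$$\int_S\cA\;\simeq\;\int_{S^\circ}\cA\;\underset{\int_{Ann}\cA}{\bt}\;\int_{D^2}\cA,$$
in which $\int_{D^2}\cA\simeq\cA$ carries its tautological braided module structure over itself, and in which the $\int_{Ann}\cA$-module structure on each factor is realized by insertion of the boundary annulus, i.e.\ is pulled back along the band functor $F_{bd}$ of Definition~\ref{annulus-tensor-functor-def}.

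Next I would assemble the progenerators. From the reconstruction results of \cite{Ben-Zvi2015}, the quantum structure sheaf $\cO_{S^\circ}$ is an $\cA$-progenerator of $\int_{S^\circ}\cA$ with $\underline{\End}_\cA(\cO_{S^\circ})\cong A_{S^\circ}$, and likewise $\cO_{D^2}=\mathbf{1}_\cA$ is an $\cA$-progenerator of $\int_{D^2}\cA\simeq\cA$ with $\underline{\End}_\cA(\mathbf{1}_\cA)\cong\mathbf{1}_\cA$. Since $F_{bd}$ is dominant and $\cA$ is rigid, Theorem~\ref{inducedactions}(2), applied with $\cB=\int_{Ann}\cA$ and $F=F_{bd}$, promotes these to progenerators for the respective $\int_{Ann}\cA$-module structures, while Proposition~\ref{momentmaps} (equivalently Theorem~\ref{quantum-moment-map-theorem}(1)) equips $A_{S^\circ}$ and $\mathbf{1}_\cA$ with the quantum moment maps $\mu:\oa\to A_{S^\circ}$ and $\epsilon:\oa\to\mathbf{1}_\cA$ that encode those module structures. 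Applying Corollary~\ref{annulus-corollary} with $\cM=\int_{S^\circ}\cA$, $\cN=\int_{D^2}\cA$ and progenerators $\cO_{S^\circ}$, $\mathbf{1}_\cA$ then yields
$$\int_S\cA\;\simeq\;\bigl(\underline{\End}(\cO_{S^\circ})\bimodu\underline{\End}(\mathbf{1}_\cA)\bigr)_{\oa\modu_\cA}\;=\;(A_{S^\circ}\bimodu\mathbf{1}_\cA)_{\oa\modu_\cA},$$
which is the claim.

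The main obstacle — and the only step that is not a direct citation — is checking that the $\int_{Ann}\cA$-module structures produced by the excision decomposition are literally the ones consumed by Corollary~\ref{annulus-corollary}: that under the identification $\int_{Ann}\cA\simeq\oa\modu_\cA$ the boundary-annulus insertion on $\int_{S^\circ}\cA$ (resp.\ on $\int_{D^2}\cA$) is the $\oa$-action induced by $\mu$ (resp.\ by $\epsilon$). This is exactly the identification $F_{bd}\cong\act_{\cO_\cA}$ used in the proof of Theorem~\ref{quantum-moment-map-theorem}, combined with that theorem itself, together with careful bookkeeping of left- versus right-module conventions so that $A_{S^\circ}$ and $\mathbf{1}_\cA$ land on the correct sides of the bimodule category. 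Once this compatibility is in place, the theorem follows formally.
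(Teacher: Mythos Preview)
Your proposal is correct and follows exactly the paper's approach: the paper sets up the excision decomposition, identifies the $\cA$-progenerators $\cO_{S^\circ}$ and $\mathbf{1}_\cA$ with their internal endomorphism algebras $A_{S^\circ}$ and $\mathbf{1}_\cA$, records the resulting quantum moment maps and $\int_{Ann}\cA$-module structures, and then states the theorem as a direct application of Corollary~\ref{annulus-corollary}. The compatibility check you flag as the ``main obstacle'' is precisely what the paper handles in the paragraphs immediately preceding the theorem (via the identification $F_{bd}\cong\act_{\cO_\cA}$ and the displayed equivalences of $\int_{Ann}\cA$-module categories).
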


\subsection{Marked points}

Let $X\subset S$ denote a finite set of points.  Let us fix braided $\cA$-module categories $\cM_i$ attached to each points $x_i\in X$, and let $\overline{X} = D_{x_1}\cup \cdots \cup D_{x_r}$ denote a tubular neighborhood of $X$, consisting of the disjoint union of some discs $D_{x_i}$ containing each point $x_i$, and let $S^\circ = S\setminus \overline{X}$.  This data defines an invariant,
$$(S,X)\mapsto \int_{(S,X)} \!\!\!\!\!\!\left(\cA,\{\cM_1,\ldots,\cM_r\}\right)$$
of surfaces with marked point $x$ labeled by $\cM$. Applying excision gives an equivalence:
$$\int_{(S,X)} \!\!\!\!\!\!\left(\cA,\{\cM_1,\ldots,\cM_r\}\right)\simeq\int_{S^\circ}\!\!\!\!\!\cA \underset{\scalebox{0.75}{$\displaystyle{\left(\int_{Ann}\!\!\!\!\!\!\cA\right)}$}^{\bt I}}{\scalebox{1.5}{$\bt$}}\left(\cM_1\bt\cdots\bt \cM_r\right).$$

Now let us assume furthermore that each $\cM_i$ is given as the category of modules for an algebra $A_i\in\cA$, i.e. that
$$\cM_i=A_i\modu_\cA.$$
Giving such a presentation is equivalent to giving an $\cA$-progenerator $M_i\in\cM_i$ as an $\cA$-module category, by taking $A_i=\underline{\End}(M_i)$.  It follows from Theorem~\ref{quantum-moment-map-theorem} that each $A_i$ canonically receives a quantum moment map,
$$ \mu_i:\oa\to A_i,$$
such that the $\int_{Ann}\cA$-module action is identified with the relative tensor product over $\mu_i$.

Applying Corollary \ref{annulus-corollary}, we obtain:

\begin{theorem}\label{marked-surface-category-theorem} The factorization homology of the marked surface $(S,X)$, with braided module categories $\cM_i=\modul$-$A_i$ attached to each $x_i$ is equivalent to the category,
$$ \int_{(S,X)} (\cA,\{\cM_1,\ldots,\cM_r\})\simeq   (A_{S\setminus \overline{X}}\bimodu (A_1\bt\cdots\bt A_r)_{\oa^{\bt r}\modu_\cA},$$
 of bimodules in $\oa^{\ot r}\modu_\cA$ for the pair of algebras $A_{S\setminus \sqcup D^2_i}$ and $A_1\bt\cdots \bt A_r$.
\end{theorem}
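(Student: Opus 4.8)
The plan is to reduce the statement to the reconstruction package of Section~\ref{reconstruction-section}, run simultaneously at the $r$ boundary circles, exactly in the spirit of the proof of Theorem~\ref{quantum-moment-map-theorem}. The point of departure is the excision equivalence displayed just above the theorem,
$$\int_{(S,X)}(\cA,\{\cM_1,\ldots,\cM_r\}) \simeq \int_{S^\circ}\cA \underset{(\int_{Ann}\cA)^{\bt r}}{\bt} (\cM_1\bt\cdots\bt\cM_r),$$
so it remains only to compute the relative tensor product on the right as a category of bimodules.

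First I would assemble the reconstruction data needed to apply Corollary~\ref{cor:bimoduleGluing}. Set $\cB = (\int_{Ann}\cA)^{\bt r}$ and let $F = F_{bd}^{\bt r}:\cA^{\bt r}\to\cB$ be the external product of the band tensor functors of Definition~\ref{annulus-tensor-functor-def}. Since $\cA$ is rigid, so is $\cA^{\bt r}$; and since $\cO_\cA = \mathbf{1}_{\int_{Ann}\cA}$ is an $\cA$-progenerator for the band action (Theorem~4.16 of \cite{Ben-Zvi2015}), the external power $\mathbf{1}_\cB = \cO_\cA^{\bt r}$ is an $\cA^{\bt r}$-progenerator for $\cB$, with $\underline{\End}_\cB(\mathbf{1}_\cB) = \oa^{\bt r}$. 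Thus the hypotheses of Theorem~\ref{reconstruction} and Corollary~\ref{cor:bimoduleGluing} are met, with $\underline{\End}(\mathbf{1}_\cB)\modu_{\cA^{\bt r}} = \oa^{\bt r}\modu_\cA$.

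Next I would identify the two $\cB$-module categories together with their progenerators. The category $\int_{S^\circ}\cA$ carries a $\cB$-action by insertion of annuli along the $r$ boundary circles of $S^\circ$; the quantum structure sheaf $\cO_{S^\circ}$ is an $\cA$-progenerator for each of these $r$ actions (again Theorem~4.16 of \cite{Ben-Zvi2015}), hence an $\cA^{\bt r}$-progenerator by the factorwise conservativity-and-cocontinuity argument used in the proof of Theorem~\ref{inducedactions}, and $\underline{\End}(\cO_{S^\circ}) = A_{S\setminus\overline{X}}$ by construction, the induced map $\oa^{\bt r}\to A_{S\setminus\overline{X}}$ being the tuple of boundary quantum moment maps. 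Similarly $\cM_1\bt\cdots\bt\cM_r$ is a $\cB$-module with $\cA^{\bt r}$-progenerator $M_1\bt\cdots\bt M_r$ and $\underline{\End}(M_1\bt\cdots\bt M_r) = A_1\bt\cdots\bt A_r$, its $\oa^{\bt r}$-action being the external product of the quantum moment maps $\mu_i$ furnished by Theorem~\ref{quantum-moment-map-theorem}.

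With this data, Corollary~\ref{cor:bimoduleGluing} (equivalently, the $r$-fold version of Corollary~\ref{annulus-corollary}) yields
$$\int_{S^\circ}\cA \underset{\cB}{\bt} (\cM_1\bt\cdots\bt\cM_r) \simeq (A_{S\setminus\overline{X}}\bimodu (A_1\bt\cdots\bt A_r))_{\oa^{\bt r}\modu_\cA},$$
and substituting into the excision equivalence completes the argument. The step I expect to be the main obstacle is checking that the $\cB=(\int_{Ann}\cA)^{\bt r}$-action on $\int_{S^\circ}\cA$ is genuinely the pull-back along $F_{bd}^{\bt r}$ of the stacking action --- that is, that the $r$ band insertions at the distinct boundary circles are mutually disjoint, so their actions commute up to coherent isotopy --- together with the companion point that $\cO_{S^\circ}$ is a joint progenerator for the combined $\cA^{\bt r}$-action rather than merely for each factor separately. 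Once these geometric inputs are secured, the purely algebraic content of Section~\ref{reconstruction-section} transfers verbatim.
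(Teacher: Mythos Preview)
Your proposal is correct and follows essentially the same approach as the paper, which simply states that the theorem is obtained by applying Corollary~\ref{annulus-corollary} to the excision equivalence. You have spelled out in more detail the $r$-fold version of that corollary (via Corollary~\ref{cor:bimoduleGluing} applied to $\cB=(\int_{Ann}\cA)^{\bt r}$) and the verification of the progenerator hypotheses, but the underlying argument is the same.
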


\subsection{The functor of global sections}

Let us now turn to computing the endomorphism algebra of the distinguished object $\cO_{\cA,S}\in\int_S\cA$.  To this end, we recall first of all that we have an isomorphism,
\begin{equation}\label{eq:S-decomp}\cO_{\cA,S} \cong \cO_{\cA,S^\circ} \underset{\scalebox{0.75}{$\displaystyle{\int_{Ann}\!\!\!\!\!\!\!\cA}$}}{\bt}\cO_{\cA,D^2}.\end{equation}

More generally, for any braided module category $\cM$, and any $N\in\cM$, we have,
\begin{equation}\label{eq:S-decomp-general}i_{x*}(N) \cong \cO_{\cA,S^\circ} \underset{\scalebox{0.75}{$\displaystyle{\int_{Ann}\!\!\!\!\!\!\!\cA}$}}{\bt}N.\end{equation}

\begin{theorem}\label{HamRedAlg}
We have an isomorphism of algebras,
$$\End(\cO_{\cA,S}) \cong \Hom_{\cA}(\mathbf{1}_\cA,A_{S\setminus D^2}\underset{\oa}{\ot} \mathbf{1}_\cA),$$
between the endomorphism algebra of the distinguished object and the Hamiltonian reduction algebra.

More generally, if $S$ is a surface with a point marked by a braided module category $\cM$, then, for any $N\in\cM$, we have an isomorphism of algebras:
$$\End(i_{x*}(N)) \cong \Hom_{\cA}(\mathbf{1}_\cA,A_{S\setminus D^2}\underset{\oa}{\ot}\underline{\End}(N)),$$
where the relative tensor product is taken over the canonical homomorphism $\rho: \oa\to N$ given by Proposition \ref{momentmaps}.
\end{theorem}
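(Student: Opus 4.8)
I will prove the second (more general) assertion; the first is then its special case for the vacuum marking $\cM=\cA$, $N=\mathbf{1}_\cA$, where $\underline{\End}(\mathbf{1}_\cA)=\mathbf{1}_\cA$ and $i_{x*}(\mathbf{1}_\cA)=\cO_{\cA,S}$ by \eqref{eq:S-decomp} (filling the disc back in with the vacuum module returns the unmarked surface and carries the distinguished object to the distinguished object). Write $S^\circ=S\setminus D^2$, and recall from \eqref{eq:S-decomp-general} the excision isomorphism $i_{x*}(N)\cong\cO_{\cA,S^\circ}\bt_{\int_{Ann}\cA}N$. The plan is to compute $\End(i_{x*}(N))$ through the adjunction $(i_{x*},i_x^{!})$, where $i_x^{!}$ is the right adjoint (``restriction'') of the open-embedding pushforward $i_{x*}$.

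The crux is that, by excision together with Theorem~\ref{quantum-moment-map-theorem}(2) applied to $\int_{S^\circ}\cA$ (recall that $\cO_{\cA,S^\circ}$ is an $\cA$-progenerator of $\int_{S^\circ}\cA$, a basic result of~\cite{Ben-Zvi2015}, hence an $\int_{Ann}\cA$-progenerator by Theorem~\ref{inducedactions}(2)), the monad $i_x^{!}i_{x*}$ on $\cM$ is the one sending $N$ to $A_{S^\circ}\rtr N$, the action on $N$ of the algebra object $A_{S^\circ}\in\int_{Ann}\cA\simeq\oa\modu_\cA$ carrying the quantum moment map $\mu$. Hence, as algebras,
$$\End\big(i_{x*}(N)\big)\;\cong\;\Hom_\cM\big(N,\,A_{S^\circ}\rtr N\big),$$
the right-hand side taken with the composition product of endomorphisms of the free $(i_x^{!}i_{x*})$-algebra on $N$. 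I would then pass to the internal hom in $\cA$, rewriting this as $\Hom_\cA\big(\mathbf{1}_\cA,\,\underline{\Hom}_\cM(N,A_{S^\circ}\rtr N)\big)$, and prove the projection formula
$$\underline{\Hom}_\cM\big(N,\,A_{S^\circ}\rtr N\big)\;\cong\;A_{S^\circ}\underset{\oa}{\ot}\underline{\End}_\cA(N),$$
the relative tensor product being formed along $\mu$ and along the moment map $\rho=\mu_N\colon\oa\to\underline{\End}_\cA(N)$ of Theorem~\ref{quantum-moment-map-theorem}(1) (equivalently Proposition~\ref{momentmaps}); combining the two displays gives the assertion. When $N$ is itself an $\cA$-progenerator of $\cM$ these last steps collapse into Corollary~\ref{annulus-corollary}, under which $i_{x*}(N)$ is the free $(A_{S^\circ},\underline{\End}_\cA(N))$-bimodule on the monoidal unit $\oa$ of $\int_{Ann}\cA$, whose bimodule endomorphism algebra is $\Hom_{\int_{Ann}\cA}(\oa,\,A_{S^\circ}\ot_\oa\underline{\End}_\cA(N))$ by the free--forgetful adjunction, and finally $\Hom_{\int_{Ann}\cA}(\oa,-)=\Hom_\cA(\mathbf{1}_\cA,-)$ since $\oa$ is the free $\oa$-module on $\mathbf{1}_\cA$.

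I expect the main obstacle to be the projection-formula step for a general, possibly non-progenerator, $N$, and above all the verification that the isomorphism it produces intertwines the composition product on $\End(i_{x*}(N))$ with the quantum Hamiltonian reduction product on $\Hom_\cA(\mathbf{1}_\cA,A_{S^\circ}\ot_\oa\underline{\End}_\cA(N))$. Concretely one would check the projection formula first for objects of the form $F_{bd}(V)$ with $V\in\cA$ dualizable, using rigidity of $\cA$ and the defining property of $\mu_N$, and then pass to $A_{S^\circ}$ by writing it, as an $\oa$-module, as a colimit of such objects; the compatibility with algebra structures is then a diagram chase which, reassuringly, uses only the left $A_{S^\circ}$-action, the right $\underline{\End}_\cA(N)$-action and the associativity constraints of $\int_{Ann}\cA$ --- never a braiding on it.
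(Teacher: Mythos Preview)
Your approach is correct but takes a different route from the paper. The paper's proof is a direct application of the Douglas--Schommer-Pries--Snyder formula for Hom spaces in a relative tensor product: for $\cM,\cN$ module categories over a rigid tensor category $\cC$,
\[
\Hom_{\cM\bt_\cC\cN}(m\bt_\cC n,\,m'\bt_\cC n')\;\cong\;\Hom_\cC\big(\mathbf{1}_\cC,\,\underline{\Hom}(m,m')\ot\underline{\Hom}(n,n')\big).
\]
Applying this with $\cC=\int_{Ann}\cA$, $m=m'=\cO_{\cA,S^\circ}$, $n=n'=N$, and then unwinding $\Hom_{\int_{Ann}\cA}(\oa,-)=\Hom_\cA(\mathbf{1}_\cA,F_{bd}^R(-))$ and $F_{bd}^R(-\odot-)=-\ot_{\oa}-$ from Theorem~\ref{reconstruction}, gives the result in one line. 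In particular the DSPS formula already encodes both your monad computation and your projection formula, and requires no separate treatment of the non-progenerator case.

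What your approach buys is a more explicit identification of the monad $i_x^{!}i_{x*}$ as ``act by $A_{S^\circ}$'', which makes the algebra structure on the right-hand side transparent; the paper's proof, by contrast, is terse on this point and leaves the compatibility of algebra structures implicit in the naturality of the DSPS isomorphism. Conversely, the paper's route avoids the projection-formula step you flag as the main obstacle, since that step is absorbed into the cited result. If you want to keep your argument self-contained, note that your projection formula $\underline{\Hom}_\cM(N,A_{S^\circ}\rtr N)\cong A_{S^\circ}\ot_{\oa}\underline{\End}_\cA(N)$ is exactly the content of the DSPS formula specialized to internal Homs, so you could simply cite it rather than reprove it via colimits.
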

\begin{proof}
The proof is based on a concrete description of Hom spaces between pure tensor products of objects in relative tensor product categories, which was proved in~\cite{Douglas2013}.  We recall:

\begin{proposition}[\cite{Douglas2013}]
Given a right module category $\cM$, and a left module category $\cN$ over a rigid tensor category $\cC$, and $m,m'\in \cM, n,n'\in \cN$, we have an isomorphism,
$$ \Hom_{\cM\underset{\cC}{\bt}\cN}(m \underset{\cC}{\bt} n, m' \underset{\cC}{\bt} n') \cong \Hom_\cC(\mathbf{1}_\cC, \underline{\Hom}(m,m')\ot \underline{\Hom}(n,n')).$$
\end{proposition}
Combining the above proposition with the tensor product decomposition of equation \eqref{eq:S-decomp}, we have an isomorphism,
\begin{align*}\End_{\int_S\cA}(i_*(N)) &\cong \End_{\int_S\cA}\left(\cO_{\cA,S\setminus D^2}\underset{\scalebox{0.75}{$\displaystyle{\int_{Ann}\!\!\!\!\!\!\!\cA}$}}{\bt} N\right)\\
&\cong \Hom_{\int_{Ann}\cA}\left(\oa,\underline{\End}_{\int_{Ann}\cA}(\cO_{\cA,S\setminus D^2})\odot \underline{\End}_{\int_{Ann}\cA}(N)\right)\\
&\cong \Hom_{\int_{Ann}\cA}\left(F_{bd}(\mathbf{1}_\cA),\underline{\End}_{\int_{Ann}\cA}(\cO_{\cA,S\setminus D^2})\odot \underline{\End}_{\int_{Ann}\cA}(N)\right)\\
&\cong \Hom_{\cA}\left(\mathbf{1}_\cA,F_{bd}^R\left(\underline{\End}_{\int_{Ann}\cA}(\cO_{\cA,S\setminus D^2})\odot \underline{\End}_{\int_{Ann}\cA}(N)\right)\right)\\
&\cong \Hom_{\cA}\left(\mathbf{1}_\cA, A_{S\setminus D^2}\underset{\oa}{\ot} \underline{\End}_{\cA}(N)\right),
\end{align*}
as claimed.
\end{proof}

\begin{remark} In the case $\cA=\Rep G$, the distinguished object can be identified with the structure sheaf of the character stack.  Hence the functor $\Gamma = \Hom(\cO_{\cA,S},-)$ can be viewed as a ``global sections functor" on the quantized character stack $\int_S \Repq  G$.  We note that the procedure prescribed in Theorem \ref{HamRedAlg}, of tensoring with the trivial module along the quantum moment map, and then taking invariants, is precisely the procedure of quantum Hamiltonian reduction.
\end{remark}


\end{document}